\numberwithin{equation}{section}
\DeclareMathOperator{\supp}{supp}
\DeclareMathOperator{\propagation}{prop}
\DeclareMathOperator{\Ind}{Ind}
\DeclareMathOperator{\tc}{tc}
\DeclareMathOperator{\ind}{index}
\DeclareMathOperator{\Spin}{Spin}
\newcommand{\beq}[1]{\begin{equation} \label{#1}}
\newcommand{\eeq}{\end{equation}}
\newcommand{\bea}{\begin{eqnarray}}
\newcommand{\eea}{\end{eqnarray}}
\begin{document}

\theoremstyle{plain}
\newtheorem{theorem}{Theorem}[section]
\newtheorem{thm}{Theorem}[section]
\newtheorem{lemma}[theorem]{Lemma}
\newtheorem{proposition}[theorem]{Proposition}
\newtheorem{prop}[theorem]{Proposition}
\newtheorem{corollary}[theorem]{Corollary}
\newtheorem{conjecture}[theorem]{Conjecture}
\newtheorem{question}[theorem]{Question}

\theoremstyle{definition}
\newtheorem{convention}[theorem]{Convention}
\newtheorem{definition}[theorem]{Definition}
\newtheorem{defn}[theorem]{Definition}
\newtheorem{example}[theorem]{Example}
\newtheorem{remark}[theorem]{Remark}
\newtheorem*{remark*}{Remark}
\newtheorem*{overview*}{Overview}
\newtheorem*{results*}{Results}
\newtheorem{rem}[theorem]{Remark}

\newcommand{\C}{\mathbb{C}}
\newcommand{\R}{\mathbb{R}}
\newcommand{\Z}{\mathbb{Z}}
\newcommand{\N}{\mathbb{N}}
\newcommand{\Q}{\mathbb{Q}}

\newcommand{\Supp}{{\rm Supp}}

\newcommand{\field}[1]{\mathbb{#1}}
\newcommand{\bZ}{\field{Z}}
\newcommand{\bR}{\field{R}}
\newcommand{\bC}{\field{C}}
\newcommand{\bN}{\field{N}}
\newcommand{\bT}{\field{T}}
\newcommand{\cB}{{\mathcal{B} }}
\newcommand{\cK}{{\mathcal{K} }}
\newcommand{\cF}{{\mathcal{F} }}
\newcommand{\cO}{{\mathcal{O} }}
\newcommand{\cE}{\mathcal{E}}
\newcommand{\cS}{\mathcal{S}}
\newcommand{\cN}{\mathcal{N}}
\newcommand{\calL}{\mathcal{L}}

\newcommand{\KK}{K \! K}

\newcommand{\norm}[1]{\| #1\|}

\newcommand{\Spinc}{\Spin^c}

\newcommand{\HH}{{\mathcal{H} }}
\newcommand{\Hpi}{\HH_{\pi}}

\newcommand{\DNR}{D_{N \times \R}}


\def\kt{\mathfrak{t}}
\def\kk{\mathfrak{k}}
\def\kp{\mathfrak{p}}
\def\kg{\mathfrak{g}}
\def\kh{\mathfrak{h}}
\def\so{\mathfrak{so}}
\def\cut{c}

\newcommand{\ddt}{\left. \frac{d}{dt}\right|_{t=0}}

\newcommand{\Todo}{\textbf{To do}}

\title{Functoriality for Higher Rho Invariants of Elliptic Operators}

\author{Hao Guo}
\address[Hao Guo]{ Department of Mathematics, Texas A\&M University }
\email{haoguo@math.tamu.edu}

\author{Zhizhang Xie}
\address[Zhizhang Xie]{ Department of Mathematics, Texas A\&M University }
\email{xie@math.tamu.edu}

\author{Guoliang Yu}
\address[Guoliang Yu]{ Department of
	Mathematics, Texas A\&M University}
\email{guoliangyu@math.tamu.edu}
\thanks{H.G. is partially supported by NSF 1564398. Z.X. is partially supported by NSF 1500823 and 1800737. G.Y. is partially supported by NSF 1700021, 1564398, and the Simons Fellows Program}

\subjclass[2010]{46L80, 58B34, 53C20}

\maketitle

\begin{abstract}
Let $N$ be a closed spin manifold with positive scalar curvature and $D_N$ the Dirac operator on $N$. Let $M_1$ and $M_2$ be two Galois covers of $N$ such that $M_2$ is a quotient of $M_1$. Then the quotient map from $M_1$ to $M_2$ naturally induces maps between the geometric $C^*$-algebras associated to the two manifolds. We prove, by a finite-propagation argument, that the \emph{maximal} higher rho invariants of the lifts of $D_N$ to $M_1$ and $M_2$ behave functorially with respect to the above quotient map. This can be applied to the computation of higher rho invariants, along with other related invariants.
\end{abstract}
\vspace{1cm}
\tableofcontents

\section{Introduction}
\label{sec intro}
 An elliptic differential operator on a closed manifold has a Fredholm index. When such an operator is lifted to a covering space, one can, by taking into account the group of symmetries, define a far-reaching generalization of the Fredholm index, called the \emph{higher index} \cite{Baum-Connes,BCH,ConnesNCG,Kasparov,WillettYu}.
The higher index serves as an obstruction to the existence of invariant metrics of positive scalar curvature. In the case that such a metric exists, so that the higher index of the lifted operator vanishes, a \emph{secondary invariant} called the \emph{higher rho invariant} \cite{Roetopology,HR3} can be defined. The higher rho invariant is an obstruction to the inverse of the operator being local \cite{Hongzhi}. For some recent applications of the higher index and higher rho invariant to problems in geometry and topology, we refer the reader to \cite{CWXY,Song-Tang,Wang-Wang,WXY,XY1,XY2,XYZ}.

The main purpose of this paper is to prove that the higher rho invariant behaves functorially under appropriate maps between covering spaces. 

More precisely, suppose $M_1$ and $M_2$ are two Galois covers of a closed spin manifold $N$ with deck transformation groups $\Gamma_1$ and $\Gamma_2\cong \Gamma_1/H$, where $H$ is a normal subgroup of $\Gamma_1$. 
The natural projection $\pi\colon M_1\rightarrow M_2$ induces a family of maps between various \emph{geometric $C^*$-algebras} associated to $M_1$ and $M_2$, called \emph{folding maps}, which will be reviewed in section \ref{sec prelim}.
In particular, there is a folding map
\begin{align*}
\Psi\colon C^*_{\textnormal{max}}(M_1)^{\Gamma_1}&\rightarrow C^*_{\textnormal{max}}(M_2)^{\Gamma_2}
\end{align*}
between maximal equivariant Roe algebras on $M_1$ and $M_2$ with the property that it preserves small propagation of operators. Consequently, $\Psi$ induces a map 
$$\Psi_{L,0}\colon C^*_{L,0,\textnormal{max}} (M_1)^{\Gamma_1} \rightarrow C^*_{L,0,\textnormal{max}} (M_2)^{\Gamma_2}$$
at the level of obstruction algebras. The main result of this paper is that the $K$-theoretic map induced by $\Psi_{L,0}$ relates the maximal higher rho invariants of Dirac operators on $M_1$ and $M_2$:
\begin{theorem}
	\label{thm funct higher rho}
	Let $N$ be a closed, spin Riemannian manifold with positive scalar curvature. Let $D_N$ be the Dirac operator on $N$. Let $M_1$ and $M_2$ be Galois covers of $M$ with deck transformation groups $\Gamma_1$ and $\Gamma_2\cong\Gamma_1/H$ respectively, for a normal subgroup $H$ of $\Gamma_1$. Let $D_1$ and $D_2$ be the lifts of the Dirac operator on $N$ to $M_1$ and $M_2$ respectively. Then
	$$(\Psi_{L,0})_*(\rho_{\textnormal{max}}(D_1))=\rho_{\textnormal{max}}(D_2)\in K_\bullet(C^*_{L,0,\textnormal{max}}(M_2)^{\Gamma_2}),$$
	where $(\Psi_{L,0})_*$ is the map on $K$-theory induced by $\Psi_{L,0}$ and $\rho_{\textnormal{max}}(D_j)$ is the maximal higher rho invariant of $D_j$.
\end{theorem}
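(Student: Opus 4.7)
The plan is to exploit the defining feature of the folding map $\Psi$, namely that it preserves propagation, and to combine this with the fact that the higher rho invariant is built out of functions of $D_j$ whose Fourier transforms are compactly supported in a shrinking interval. The strategy is therefore to pass functional calculus through $\Psi$ via the finite-propagation-speed property of the wave equation.

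First, I recall the construction of the representative. Choose a normalizing function $\chi\colon \R \to [-1,1]$, odd, with $\chi(s) \to \pm 1$ as $s \to \pm \infty$, and with $\hat\chi$ compactly supported. For each $s \in [1, \infty)$ set $\chi_s(x) = \chi(x/s)$; then $\hat{\chi_s}$ has support in $[-C/s, C/s]$ for some fixed $C$. Because $N$ has positive scalar curvature, Lichnerowicz gives a spectral gap of $D_N$ about $0$, which lifts to uniform gaps of $D_1$ and $D_2$; hence for $s$ sufficiently large, $\chi_s(D_j)^{2} - 1$ is a norm-small perturbation of $0$, and the unitary
\begin{equation*}
u_s(D_j) \;=\; \exp\!\bigl(\pi i (\chi_s(D_j)+1)\bigr)
\end{equation*}
is defined and converges in norm to $1$ as $s \to \infty$. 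The path $s \mapsto u_s(D_j)$ (with a matching connection back to $1$ on $[0,1]$) has propagation tending to $0$ and lives in the unitization of $C^{*}_{L,0,\max}(M_j)^{\Gamma_j}$, giving $\rho_{\max}(D_j)$.

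Next, I would establish the key compatibility: for every $f \in C_0(\R)$ whose Fourier transform is supported in a sufficiently small interval (smaller than the uniform $H$-injectivity radius of the covering $M_1 \to M_2$, which is positive by cocompactness), one has
\begin{equation*}
\Psi\bigl(f(D_1)\bigr) \;=\; f(D_2).
\end{equation*}
The argument is Fourier inversion together with unit propagation speed of the wave equation: write $f(D_1) = \int \hat f(t)\, e^{itD_1}\,dt$, observe that $e^{itD_1}$ has propagation $\le |t|$, and verify directly on kernels that $\Psi(e^{itD_1}) = e^{itD_2}$ for $|t|$ less than the $H$-injectivity radius, because $D_2 = \pi_{*} D_1$ and the lift-and-sum-over-$H$ description of $\Psi$ reproduces the wave kernel on $M_2$ from that on $M_1$. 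Swapping $\Psi$ past the integral, which is legitimate since $\Psi$ is bounded and the integral converges in norm, yields the identity above.

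Applying this compatibility to $\chi_s$ for $s$ large (so $C/s$ is less than the $H$-injectivity radius), I obtain $\Psi(\chi_s(D_1)) = \chi_s(D_2)$, and hence $\Psi(u_s(D_1)) = u_s(D_2)$. The full paths representing $\rho_{\max}(D_j)$ are then intertwined by $\Psi$ on the tail $s \gg 0$; the initial segment, where propagation may be large, can be handled by reparametrising the path so that it starts at a value of $s$ already beyond the compatibility threshold, or by a straight-line homotopy within $C^{*}_{L,0,\max}$ whose image under $\Psi_{L,0}$ is again a valid homotopy. It follows that the classes of $u_{(\cdot)}(D_1)$ and $u_{(\cdot)}(D_2)$ correspond under $(\Psi_{L,0})_{*}$, which is exactly the claimed identity.

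The main obstacle is the kernel-level verification that $\Psi$ intertwines the wave operators on the small-time interval. Here one has to be careful that $\Psi$, although densely defined on finite-propagation operators on $M_1$, truly extends so as to agree with functional calculus on $M_2$; this is where the hypothesis that $\Psi$ is defined on the \emph{maximal} Roe algebra and that wave kernels lie in an appropriate convolution subalgebra is used. Once this is in hand, the rest of the argument is a straightforward propagation-tracking computation carried out inside $C^{*}_{L,0,\max}$.
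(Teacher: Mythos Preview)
Your overall strategy---intertwine the wave operators via $\Psi$, then push functional calculus through by Fourier inversion---is exactly the paper's. But you are missing the one-line observation that makes the argument clean, and the workaround you propose in its place does not obviously go through.

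You establish $\Psi(e^{itD_1})=e^{itD_2}$ only for $|t|$ below the $H$-injectivity radius, and then restrict to $f$ with small Fourier support. The paper instead observes that $\Psi$ is a $*$-homomorphism on finite-propagation operators, so from $\Psi(e^{i(t/n)D_1})=e^{i(t/n)D_2}$ for large $n$ one gets
\[
\Psi(e^{itD_1})=\Psi\big((e^{i(t/n)D_1})^n\big)=(e^{i(t/n)D_2})^n=e^{itD_2}
\]
for \emph{every} $t\in\R$. Fourier inversion then yields $\Psi(f(D_1))=f(D_2)$ for all $f\in C_0(\R)$, with no restriction on the Fourier support. Consequently the entire path $t\mapsto A_1(F_t)$ is carried to $t\mapsto A_2(F_t)$ by $\Psi$, and there is nothing to patch on the initial segment.

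Your proposed patch is where the gap lies. Reparametrising so the path ``starts at a value of $s$ beyond the compatibility threshold'' does not give an element of $C^*_{L,0,\max}$: the value at time $0$ must be the trivial unitary, and that forces the normalizing function toward the sign function, whose Fourier support is not compact (let alone small). So the initial segment necessarily involves large-propagation operators, and a different choice of connecting path from $1$ to $A_j(F_{t_0})$ need not define the same $K$-theory class nor be intertwined by $\Psi$. The vague ``straight-line homotopy'' does not address this. Once you insert the multiplicativity bootstrap above, the problem disappears.

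A minor correction: with $\chi_s(x)=\chi(x/s)$, as $s\to\infty$ one has $\chi_s\to 0$ pointwise, so $\chi_s(D_j)^2-1\to -1$ on the spectral gap complement, not $0$; the limit $u_s(D_j)\to 1$ occurs as $s\to 0$ (where $\chi_s\to\operatorname{sign}$ uniformly away from $0$), not as $s\to\infty$. Your parametrisation has the two ends of the path reversed.
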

We also generalize this result to the non-cocompact setting using the same method of proof -- see section \ref{sec generalizations} of this paper.

Theorem \ref{thm funct higher rho} is useful for computing higher rho and related invariants. It has been applied in the recent work of Wang, Xie, and Yu \cite{Jinmin2} to compute the delocalized eta invariant on a covering space of a closed manifold by showing that, under suitable geometric conditions, it can be approximated by delocalized eta invariants on finite-sheeted covers, which are more computable. We mention two of their results.

Let $M$ be a closed spin manifold equipped with a positive scalar curvature metric. First, the authors proved that given a finitely generated discrete group $\Gamma$ and a sequence $\{\Gamma_i\}_{i\in\mathbb{N}}$ of finite-index normal subgroups of $\Gamma$ that \emph{distinguishes}, in a suitable sense (\cite[Definition 2.3]{WXY}), a given non-trivial conjugacy class in $\Gamma$, the sequence of associated delocalized eta invariants on the $\Gamma_i$-Galois covers of $M$ stabilizes, under the assumption that the maximal Baum-Connes assembly map for $\Gamma$ is a rational isomorphism. We refer to \cite[Theorem 1.1]{WXY} for more details.

Second, the authors proved that if $\widetilde{D}$ is the Dirac operator associated to the $\Gamma$-Galois cover of $M$, then if the spectral gap of $\widetilde{D}$ at zero is sufficiently large, the delocalized eta invariant of $\widetilde{D}$ is equal to the limit of those of Dirac operators associated to the $\Gamma_i$-Galois covers of $M$ \cite[Theorem 1.4]{WXY}. In particular, this is true if the group $\Gamma$ has subexponential growth \cite[Corollary 1.5]{WXY}.


Finally, we mention that in a new preprint \cite{Liu-Tang-Xie-Yao-Yu} by Liu, Tang, Xie, Yao, and Yu, the authors apply Theorem \ref{thm funct higher rho} to study rigidity of \emph{relative eta invariants}. These invariants are obtained by pairing the higher rho invariant with certain traces, similar to the way in which the delocalized eta invariant is related to the higher rho invariant through a trace \cite{XY2}.

%


\hfill\vskip 0.05in
\begin{overview*}
\hfill\vskip 0.05in
\noindent The paper is organized as follows. We begin in section \ref{sec prelim} by recalling the geometric and operator-algebraic setup we work with. In section \ref{sec folding} we define the folding maps and establish some of their basic properties. In section \ref{sec funct calc} we develop the analytical properties of the wave operator in the maximal setting. These tools are then put to use in section \ref{sec higher ind}, where we give a new proof of the functoriality property of the maximal equivariant higher index. This serves as an intermediate step towards Theorem \ref{thm funct higher rho}, which is proved in section \ref{sec higher rho}. In section \ref{sec generalizations} we provide generalizations of our results to the non-cocompact setting.
\end{overview*}
\subsection*{Acknowledgements}
\hfill\vskip 0.05in
\noindent The authors are grateful to Peter Hochs for pointing us to the reference \cite[Theorem 1.1]{MislinValette} of Alain Valette for functoriality of the maximal higher index.

\hfill\vskip 0.2in
\section{Preliminaries}
\label{sec prelim}
In this section, we fix some notation before introducing the necessary operator-algebraic background and geometric setup for our results.
\subsection{Notation}
\label{subsec notation}
\hfill\vskip 0.05in
	\noindent For $X$ a Riemannian manifold, we write $B(X)$, $C_b(X)$, $C_0(X)$, and $C_c(X)$ to denote the $C^*$-algebras of complex-valued functions on $X$ that are, respectively: bounded Borel, bounded continuous, continuous and vanishing at infinity, and continuous with compact support. A superscript `$^\infty$' may be added where appropriate to indicate the additional requirement of smoothness. 
	
	We write $d_X$ for the Riemannian distance function on $X$ and $\mathbbm{1}_S$ for the characteristic function of a subset $S\subseteq X$.
	
	For any $C^*$-algebra $A$, we denote its unitization by $A^+$, its multiplier algebra by $\mathcal{M}(A)$, and view $A$ as an ideal of $\mathcal{M}(A)$. 
	
	
	The action of a group $G$ on $X$ naturally induces a $G$-action on spaces of functions on $X$ as follows: given a function $f$ on $X$ and $g\in G$, define $g\cdot f$ by $g\cdot f(x)=f(g^{-1}x)$. More generally, for a section $s$ of a $\Gamma$-vector bundle over $X$, the section $g\cdot s$ is defined by $g\cdot s(x)=g(s(g^{-1}x))$. We say that an operator on sections of a bundle is $G$-equivariant if it commutes with the $G$-action.
	
	
\hfill\vskip 0.1in
\subsection{Geometric $C^*$-algebras}
	\label{subsec op alg}
 	\hfill\vskip 0.05in
	\noindent We now recall the notions of geometric modules and their associated $C^*$-algebras. Throughout this subsection, $X$ is a Riemannian manifold equipped with a proper isometric action by a discrete group $G$.
		\begin{definition}
		\label{def XGmodule}
	An $X$-$G$-module is a separable Hilbert space $\mathcal{H}$ equipped with a non-degenerate $*$-representation $\rho\colon C_0(X)\rightarrow\mathcal{B}(\mathcal{H})$ and a unitary representation $U\colon G\rightarrow\mathcal{U}(\mathcal{H})$ such that for all $f\in C_0(X)$ and $g\in G$, we have  $U_g\rho(f)U_g^*=\rho(g\cdot f)$.
	\end{definition}
	For brevity, we will omit $\rho$ from the notation when it is clear from context.
	\begin{definition}
	\label{def:suppprop}
		Let $\mathcal{H}$ be an $X$-$ G$-module and $T\in\mathcal{B}(\mathcal{H})$.
		\begin{itemize}
			\item The \emph{support} of $T$, denoted $\textnormal{supp}(T)$, is the complement of all $(x,y)\in X\times X$ for which there exist $f_1,f_2\in C_0(X)$ such that $f_1(x)\neq 0$, $f_2(y)\neq 0$, and
			$$f_1Tf_2=0;$$
			\item The \textit{propagation} of $T$ is the extended real number $$\textnormal{prop}(T)=\sup\{d_X(x,y)\,|\,(x,y)\in\textnormal{supp}(T)\};$$
			\item $T$ is \textit{locally compact} if $fT$ and $Tf\in\mathcal{K}(\mathcal{H})$ for all $f\in C_0(X)$;
			\item $T$ is \textit{$ G$-equivariant} if $U_g TU_g^*=T$ for all $g\in G$;

		\end{itemize}
		The \emph{equivariant algebraic Roe algebra for $\mathcal{H}$}, denoted $\mathbb{C}[X;\mathcal{H}]^ G$, is the $*$-subalgebra of $\mathcal{B}(\mathcal{H})$ consisting of $ G$-equivariant, locally compact operators with finite propagation. 
		
	\end{definition}

	We will work with the maximal completion of the equivariant algebraic Roe algebra. To ensure that this completion is well-defined, we require that the module $\mathcal{H}$ satisfy an additional admissibility condition. To define what this means, we need the following fact: if $H$ is a Hilbert space and $\rho\colon C_0(X)\rightarrow\mathcal{B}(H)$ is a non-degenerate $*$-representation, then $\rho$ extends uniquely to a $*$-representation $\widetilde{\rho}\colon B(X)\rightarrow\mathcal{B}(H)$ subject to the property that, for a uniformly bounded sequence in $B(X)$ converging pointwise, the corresponding sequence in $\mathcal{B}(H)$ converges in the strong topology.

	\begin{definition}[\cite{Yu}]
		\label{def admissible}
		Let $\mathcal{H}$ be an $X$-$G$-module as in Definition \ref{def XGmodule}. We say that $\mathcal{H}$ is \emph{admissible} if:
		\begin{enumerate}[(i)]
		\item For any non-zero $f\in C_0(X)$ we have $\rho(f)\notin\mathcal{K}(\mathcal{H})$;
		\item For any finite subgroup $F$ of $ G$ and any $F$-invariant Borel subset $E\subseteq X$, there is a Hilbert space $H'$ equipped with the trivial $F$-representation such that $\widetilde{\rho}(\mathbbm{1}_E)H'\cong l^2(F)\otimes H'$ as $F$-representations, where $\widetilde{\rho}$ is defined by extending $\rho$ as above.
		\end{enumerate}
	\end{definition}
	If an $X$-$ G$-module $\mathcal{H}$ is admissible, we will write $\mathbb{C}[X]^ G$ in place of $\mathbb{C}[X;\mathcal{H}]^ G$,
	for the reason that $\mathbb{C}[X;\mathcal{H}]^G$ is independent of the choice of admissible module -- see \cite[Chapter 5]{WillettYu}.
	
	\begin{remark}
		When $ G$ acts freely and properly on $X$, the Hilbert space $L^2(X)$ is an admissible $X$-$ G$-module. In the case that the action is not free, $L^2(X)$ can always be embedded into a larger admissible module.
	\end{remark}
	

		\begin{definition}
			\label{def:maximalnorm}
			The \emph{maximal norm} of an operator $T\in\mathbb{C}[X]^{ G}$ is
			$$||T||_{\textnormal{max}}\coloneqq\sup_{\phi,H'}\left\{\norm{\phi(T)}_{\mathcal{B}(H')}\,|\,\phi\colon\mathbb{C}[X]^{ G}\rightarrow\mathcal{B}(H')\textnormal{ is a $*$-representation}\right\}.$$
			The \emph{maximal equivariant Roe algebra of $M_j$}, denoted $C^*_{\text{max}}(X)^ G$, is the completion of $\mathbb{C}[X]^ G$ in the norm $||\cdot||_{\textnormal{max}}$.
		\end{definition}
		\begin{remark}
			To make sense of Definition \ref{def:maximalnorm} for general $X$ and $G$, one first needs to establish finiteness of the quantity $\norm{\cdot}_{\textnormal{max}}$. It was shown in \cite{GWY} that if $G$ acts on $X$ freely and properly with compact quotient, the norm $\norm{\cdot}_{\textnormal{max}}$ is finite. This was generalized in \cite{GXY} to the case when $X$ has bounded geometry and the $ G$-action satisfies a suitable geometric assumption.
		\end{remark}

		\begin{remark}
		\label{rem kernel algebra}
			Equivalently, one can obtain $C^*_{\text{max}}(X)^ G$ by taking the analogous maximal completion of the subalgebra $\mathcal{S}^ G$ of $\mathbb{C}[X]^{ G}$ consisting of those operators given by smooth Schwartz kernels.
		\end{remark}
		
	\begin{definition}
	\label{def localization}
		Consider the $*$-algebra $L$ of functions $f\colon [0,\infty)\rightarrow C^*_{\textnormal{max}}(X)^ G$ that are uniformly bounded, uniformly continuous, and such that
		$$\textnormal{prop}(f(t))\rightarrow 0\quad\textnormal{as }t\rightarrow\infty.$$
		\begin{enumerate}[(i)]
			\item The \emph{maximal equivariant localization algebra}, denoted by $C^*_{L,\textnormal{max}}(X)^ G$, is the $C^*$-algebra obtained by completing $L$ with respect to the norm $$\norm{f}\coloneqq\sup_t\norm{f(t)}_{\textnormal{max}};$$
			\item The map $L\rightarrow \mathbb{C}[X]^ G$ given by $f\mapsto f(0)$ extends to the \emph{evaluation map} $$\textnormal{ev}\colon C^*_{L,\textnormal{max}}(X)^ G\rightarrow C^*_{\textnormal{max}}(X)^ G;$$
			\item The \emph{maximal equivariant obstruction algebra} is $C^*_{L,0,\textnormal{max}}(X)^ G\coloneqq\ker(\textnormal{ev})$.
		\end{enumerate}
	\end{definition}
\hfill\vskip 0.1in
\subsection{Geometric setup}
\hfill\vskip 0.05in
\label{subsec geom setup}
\noindent We will work with the following geometric setup. 

Let $(N,g_N)$ be a closed Riemannian manifold. Let $D_N$ be a first-order essentially self-adjoint elliptic differential operator on a bundle $E_N\rightarrow N$. We will assume throughout that if $N$ is odd-dimensional then $D_N$ is an ungraded operator, while if $N$ is even-dimensional then $D_N$ is odd-graded with respect to a $\mathbb{Z}_2$-grading on $E_N$.

Let $p_1\colon M_1\rightarrow N$ and $p_2\colon M_2\rightarrow N$ be two Galois covers of $N$ with deck transformation groups $\Gamma_1$ and $\Gamma_2$ respectively. We will assume throughout this paper that $\Gamma_2\cong\Gamma_1/H$ for some normal subgroup $H$ of $\Gamma_1$, so that $M_2\cong M_1/H$. 
Let $\pi\colon M_1\rightarrow M_2$ be the projection map. Note that for $j=1,2$, the group $\Gamma_j$ acts freely and properly on $M_j$.

For $j=1$ or $2$, let $g_j$ be the lift of the Riemannian metric $g_N$ to $M_j$. Let $E_j$ be the pullback of $E$ along the covering map $M_j\rightarrow M$, equipped with the natural $\Gamma_j$-action. Since $D$ acts locally, it lifts to a $\Gamma_j$-equivariant operator $D_j$ on $C^\infty(E_j)$.

We will apply the notions in subsection \ref{subsec op alg} with $X=M_j$ and $G=\Gamma_j$. The Hilbert space $L^2(E_j)$, equipped with the natural $\Gamma_j$-action on sections and the $C_0(M_j)$-representation defined by pointwise multiplication, is an $M_j$-$\Gamma_j$-module in the sense of Definition \ref{def:suppprop}.

Moreover, the fact that the $\Gamma_j$-action on $M_j$ is free and proper implies that $L^2(E_j)$ is admissible in the sense of Definition \ref{def admissible}. To see this, choose a compact, Borel fundamental domain $\mathcal{D}_j$ for the $\Gamma_j$-action on $M_j$
such that for each $\gamma\in\Gamma_j$, the restriction
$$p_j|_{\gamma\cdot\mathcal{D}_j}\colon \gamma\cdot\mathcal{D}_j\rightarrow N$$
is a Borel isomorphism, where the projection maps $p_j$ are as in subsection \ref{subsec geom setup}. For each $j=1,2$ we have a map
\begin{align}
\label{eq Phi}
\Phi_j\colon L^2(E_j)&\to l^2(\Gamma_j)\otimes L^2(E_j|_{\mathcal{D}_j}),\nonumber\\
	s&\mapsto\sum_{\gamma\in\Gamma_j}\gamma\otimes\gamma^{-1}\chi_{\gamma\mathcal{D}_j}s.
\end{align}
This is a $\Gamma_j$-equivariant unitary isomorphism with respect to the tensor product of the left-regular representation on $l^2(\Gamma_j)$ and the trivial representation on $L^2(E_j|_{\mathcal{D}_j})$. Conjugation by $\Phi_j$ induces a $*$-isomorphism
\begin{equation}
\label{eq equiv Roe iso}
	\mathbb{C}[M_j]^{\Gamma_j}\cong\mathbb{C}\Gamma_j\otimes\mathcal{K}(L^2(E_j|_{\mathcal{D}_j})).
\end{equation}
\begin{remark}
\label{rem max norm}
It follows from \eqref{eq equiv Roe iso} that for any $r>0$, there exists a constant $C_r$ such that for all $T\in\mathbb{C}[M_j]^{\Gamma_j}$ with $\textnormal{prop}(T)\leq r$, we have
$$\norm{T}_{\textnormal{max}}\leq C_r\norm{T}_{\mathcal{B}(L^2(E_j))}.$$
Thus the maximal equivariant Roe algebra $C^*_{\textnormal{max}}(M_j)^{\Gamma_j}$ from Definition \ref{def:maximalnorm} is well-defined in our setting. Moreover,
$$C^*_{\textnormal{max}}(M_j)^{\Gamma_j}\cong C^*_{\textnormal{max}}(\Gamma_j)\otimes\mathcal{K},$$
so that the $K$-theories of both sides are isomorphic.
\end{remark}


\hfill\vskip 0.2in
\section{Folding maps}
\label{sec folding}
In this section, we define certain natural $*$-homomorphisms, called \emph{folding maps}, between geometric $C^*$-algebras of covering spaces, and discuss the role they play in functoriality for higher invariants. These maps were introduced in \cite[Lemma 2.12]{CWY}.

To begin, let us provide some motivation at the level of groups. Observe that the quotient homomorphism $\Gamma_1\rightarrow\Gamma_2\cong\Gamma_1/H$ induces a natural surjective $*$-homomorphism between group algebras,
$$\alpha\colon \mathbb{C}\Gamma_1\rightarrow\mathbb{C}\Gamma_2,\quad\sum_{i=1}^k c_{\gamma_i}\gamma_i\mapsto\sum_{i=1}^k c_{\gamma_i}[\gamma_i],$$
where $[\gamma]$ is the class of an element $\gamma\in\Gamma_1$ in $\Gamma_1/H$. If one views elements of $\mathbb{C}\Gamma_j$ as kernel operators on the Hilbert space $l^2(\Gamma_j)$, then the map $\alpha$ takes a kernel $k\colon\Gamma_1\times\Gamma_1\rightarrow\mathbb{C}$ to the kernel
$$\alpha(k)\colon \Gamma_2\times\Gamma_2\rightarrow\mathbb{C},\quad\big([\gamma],[\gamma']\big)\mapsto\sum_{h\in H}k(h\gamma,\gamma').$$

There is an analogous map between kernels at the level of the Galois covers $M_1$ and $M_2\cong M_1/H$. Given a smooth, $\Gamma_1$-equivariant Schwartz kernel $k(x,y)$ with finite propagation on $M_1$, one can define a smooth, $\Gamma_2$-equivariant Schwartz kernel $\psi(k)$ with finite propagation on $M_2$ by the formula
\begin{equation}
\label{eq Psi k}
\psi(k)([x],[y])=\sum_{h\in H}k(hx,y).
\end{equation}
Note that this sum is finite by properness of the $H$-action on $M_1$. The formula \eqref{eq Psi k} defines a map
$$\psi\colon\mathcal{S}^{\Gamma_1}\to\mathcal{S}^{\Gamma_2},$$
where the kernel algebras $\mathcal{S}^{\Gamma_1}$ and $\mathcal{S}^{\Gamma_2}$ are as in Remark \ref{rem kernel algebra}. This map is a $*$-homomorphism by \cite[Lemma 5.2]{GHM2}.

 \hfill\vskip 0.1in
\subsection{Definition of the folding map}
 	\hfill\vskip 0.05in
\noindent We now define a more general version of the map \eqref{eq Psi k}, called the \emph{folding map} $\Psi$, at the level of finite-propagation operators.

Let $\mathcal{B}_{\textnormal{fp}}(L^2(E_j))^{\Gamma_j}$ denote the $*$-algebra of bounded, $\Gamma_j$-equivariant operators on $L^2(E_j)$ with finite propagation. The folding map $\Psi$ is a $*$-homomorphism $\mathcal{B}_{\textnormal{fp}}(L^2(E_1))^{\Gamma_1}\to\mathcal{B}_{\textnormal{fp}}(L^2(E_2))^{\Gamma_2}$ with the following properties:
\begin{itemize}
	\item For any $T\in\mathcal{B}_{\textnormal{fp}}(L^2(E_1))^{\Gamma_1}$, we have $\textnormal{prop}\big(\Psi(T)\big)\leq\textnormal{prop}(T)$;
	\item $\Psi$ is surjective;
	\item $\Psi$ restricts to a surjective $*$-homomorphism $\mathbb{C}[M_1]^{\Gamma_1}\rightarrow\mathbb{C}[M_2]^{\Gamma_2}$.
	\end{itemize}
These properties are proved in Propositions \ref{prop folding}, \ref{prop prop}, and \ref{prop psi surj} below. 

To define $\Psi$, it will be convenient to use partitions of unity on $M_1$ and $M_2$ that are compatible with the $\Gamma_1$ and $\Gamma_2$-actions, as follows.
%
Since $N$ is compact, there exists $\epsilon>0$ such that for each $x\in N$, the ball $B_\epsilon(x)$ is evenly covered with respect to both $p_1\colon M_1\to N$ and $p_2\colon M_2\to N$. Let
\begin{equation}
\label{eq POU N}
	\mathcal{U}_{N}\coloneqq\{U_i\}_{i\in I}
\end{equation}
be a finite open cover of $N$ such that each $U_i$ has diameter at most $\epsilon$. Let $\{\phi_i\}_{i\in I}$ be a partition of unity subordinate to $\mathcal{U}_N$. 

For each $i\in I$, let $U_i^e$ be a lift of $U_i$ to $M_1$ via the covering map $p_1\colon M_1\to N$. Similarly, let $\phi_i^e$ be the lift of $\phi_i$ to $U_i^e$. For each $g\in\Gamma_1$, let $U_i^g$ and $\phi_i^g$ be the $g$-translates of $U_i^e$ and $\phi_i^e$ in $M_1$ respectively. Then
\begin{equation}
\label{eq POU M1}
\mathcal{U}_{M_1}\coloneqq\big\{U_i^g\big\}_{\substack{i\in I,\,g\in\Gamma_1}},\qquad\{\phi_i^g\}_{i\in I,\,g\in\Gamma_1}
\end{equation}
define a locally finite, $\Gamma_1$-invariant open cover of $M_1$ and a subordinate partition of unity. This partition of unity is $\Gamma_1$-equivariant in the sense that for each $i\in I$, $g\in\Gamma$, and $x\in\textnormal{supp}(\phi_i^e)$, we have $\phi_i^g(gx)=\phi_i^e(x)$.

After taking a quotient by the $H$-action, we obtain a $\Gamma_2$-invariant open cover of $M_2$, together with a subordinate $\Gamma_2$-equivariant partition of unity:
\begin{equation}
\label{eq POU M2}
\mathcal{U}_{M_2}\coloneqq\big\{U_i^{[g]}\big\}_{\substack{i\in I,\,[g]\in\Gamma_2}},\qquad\big\{\phi_i^{[g]}\big\}_{i\in I,\,[g]\in\Gamma_2}.
\end{equation}
For each $i$ and $g$, we have the following commutative diagram of isomorphisms of local structures involving the covering map $\pi$:
\begin{center}
\begin{tikzcd}
E_1|_{U_i^g} \arrow[r] \arrow[d]
& U_i^{g}\arrow[d, "\pi"] \\
E_2|_{U_i^{[g]}} \arrow[r]
& \,U_i^{[g]}.
\end{tikzcd}
\end{center}
We will write $\pi_*$ and $\pi|_{U_i^g}^*$ for the maps relating a local section of $E_1$ to the corresponding local section of $E_2$. That is, if $u$ is a section of $E_1$ over $U_i^g$, and $v$ is the corresponding section of $E_2$ over $U_i^{[g]}$, then we have
\begin{center}
\begin{tikzcd}[every arrow/.append style={shift left}]
u \arrow[r, maps to, "\pi_*"] & v \arrow[l, maps to, "\pi|_{U_i^g}^*"]\end{tikzcd}
\end{center}
If $w$ is any vector in the bundle $E_1$, we will also write $\pi_*(w)$ for its image under the projection $E_1\to E_2$.

We are now ready for the definition of the folding map $\Psi$. Choose a set $S\subseteq\Gamma_1$ of coset representatives for $\Gamma_2\cong H\backslash\Gamma_1$. Given an operator $T\in \mathcal{B}_{\textnormal{fp}}(L^2(E_1))^{\Gamma_1}$, we define $\Psi(T)$ to be the operator on $L^2(E_2)$ that takes a section $u\in L^2(E_2)$ to the section
	\begin{align}
	\label{eq folding complicated}
	\Psi(T)u\coloneqq\sum_{\substack{g\in\Gamma_1,\\s\in S}}\sum_{i,j\in I}\pi_*\big(\phi_i^g\cdot T\circ\pi|_{U_j^{s}}^*\circ\phi_j^{[s]}(u)\big)\in L^2(E_2).
	\end{align}
	To clarify the presentation of this equation and others like it, we adopt the following notational convention for moving between equivalent local sections of $E_1$ and $E_2$:
	\begin{convention} For $v\in L^2(E_1)$ and $u\in L^2(E_2)$, we will use the short-hand
	\label{conv convention}
	\begin{itemize}
	\item $\phi_i^{[g]}v$ to denote $\pi_*(\phi_i^{g}v)$;
	\item $\phi_i^gu$ to denote $\pi|^*_{U_i^g}(\phi_i^{[g]}u)$.
	\end{itemize}
	(Note that the meaning of $\phi_i^{[g]}v$ depends on the representative $g$.) 
	\end{convention}

Using this convention, the formula \eqref{eq folding complicated} reads
	\begin{align}
	\label{eq folding}
	\Psi(T)u\coloneqq\sum_{\substack{g\in\Gamma_1,\\s\in S}}\sum_{i,j\in I}\phi_i^{[g]}T(\phi_j^{s}u)\in L^2(E_2).
	\end{align}
	We call $\Psi$ the \emph{folding map}. The fact that the above definition is independent of the choices of the set $S$ and compatible partitions of unity is proved in Proposition \ref{prop folding}.

	Before proceeding further, let us record a few straightfoward identities that will help us navigate through notational clutter:
\begin{enumerate}[(i)]
\item If $v$ is a local section of $E_1$ supported in some neighborhood $U_j^{g}$, then for any $g'\in\Gamma_1$ we have
\begin{align}
g'\big(\phi_i^{g}v\big)&=\phi_i^{g'g}g'v,\label{eq identity2}\\
\pi_*(g'v)&=[g']\cdot\pi_*(v).\label{eq identity4}
\end{align}
\item If $u$ is a local section of $E_2$ supported in some neighborhood $U_j^{[g]}$, then for any $g,g'\in\Gamma_1$ and $j\in I$ we have
\begin{align}
g'\big(\pi|_{U_j^{g}}^*u\big)&=\pi|_{U_j^{g'g}}^*([g']\cdot u),\label{eq identity1}\\
u&=\sum_{\substack{i\in I,\\\gamma\in\Gamma_1}}\phi_i^{[\gamma]}(\pi|_{U_j^{g}}^*u).\label{eq identity3}
\end{align}
\end{enumerate}

	The following lemma provides a convenient pointwise formula for $\Psi(T)u$:
\begin{lemma} For any $u\in L^2(E_2)$ and $x\in M_2$, we have
	\begin{equation}
	\label{eq pointwise folding}
	\big(\Psi(T)u)(x)=\pi_*\Big(\sum_{\substack{j\in I,\\g\in\Gamma_1}}T(\phi_j^{g}u)(y_0)\Big),
	\end{equation}
	where $y_0$ is any point in $\pi^{-1}(x)$, and the sum on the right-hand side is finite.
	\end{lemma}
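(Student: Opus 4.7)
The plan is to unpack the definition \eqref{eq folding}, reduce the four-fold summation by first collapsing the $(i,g)$-sum via the partition-of-unity identity $\sum_{i,g}\phi_i^g\equiv 1$ on $M_1$, then invoking the $\Gamma_1$-equivariance of $T$ to push the $H$-action through, and finally reindexing via the bijection $H\times S\to\Gamma_1$, $(h,s)\mapsto h^{-1}s$. The essential new ingredient is a pointwise formula for $\pi_*$ applied to locally supported sections.

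First I would establish the following auxiliary claim: if $f$ is a section of $E_1$ which is a finite sum of sections each supported in a chart $U_i^g$ from \eqref{eq POU M1}, then for any $x\in M_2$ and any fixed preimage $y_0\in\pi^{-1}(x)$,
$$\big(\pi_*(f)\big)(x)=\sum_{h\in H}\pi_*\big(f(hy_0)\big),$$
with only finitely many nonzero terms on the right. This holds because for each summand of $f$ supported in $U_i^g$, the $H$-orbit of $y_0$ meets $U_i^g$ in at most one point: if $h_1y_0,h_2y_0\in U_i^g$ then $y_0\in U_i^{h_1^{-1}g}\cap U_i^{h_2^{-1}g}$, forcing $h_1=h_2$ by the disjointness of distinct translates $U_i^{g'}$. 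Applying this to $f=\sum_{i,g}\phi_i^g\cdot T(\phi_j^s u)$ and using $\sum_{i,g}\phi_i^g(hy_0)=1$ turns the inner double sum in \eqref{eq folding} into
$$\sum_{i,g}\big(\pi_*(\phi_i^g T(\phi_j^s u))\big)(x)=\sum_{h\in H}\pi_*\big(T(\phi_j^s u)(hy_0)\big).$$

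Next I would invoke the $\Gamma_1$-equivariance of $T$. Combined with identities \eqref{eq identity2} and \eqref{eq identity4} and the key fact that $[h^{-1}s]=[s]$ in $\Gamma_2$ for $h\in H$ (so that $h^{-1}\cdot(\phi_j^s u)=\phi_j^{h^{-1}s}u$ in the sense of Convention \ref{conv convention}), equivariance yields $T(\phi_j^s u)(hy_0)=h\cdot T(\phi_j^{h^{-1}s}u)(y_0)$. Since $\pi_*(h w)=\pi_*(w)$ for $h\in H$ (both represent the same class in $E_2=E_1/H$), the $h$ is absorbed by $\pi_*$. Summing over $s\in S$ and $h\in H$ then ranges over every $\gamma=h^{-1}s\in\Gamma_1$ exactly once, producing the claimed formula \eqref{eq pointwise folding}. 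Finiteness of the resulting sum over $\gamma\in\Gamma_1$ follows from the finite propagation of $T$: only finitely many $\gamma$ satisfy $d_{M_1}(y_0,\textnormal{supp}(\phi_j^\gamma))\le\textnormal{prop}(T)$, by local finiteness of $\mathcal{U}_{M_1}$ together with properness of the $\Gamma_1$-action.

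The main bookkeeping hazard will be tracking which pair $(h,s)$ corresponds to which $\gamma\in\Gamma_1$, and applying the identities \eqref{eq identity1}--\eqref{eq identity4} in the correct order to move between pullbacks over the fixed chart $U_j^s$ and the translated chart $U_j^{h^{-1}s}$. Once the notational formalism is in place, each step is essentially a direct substitution.
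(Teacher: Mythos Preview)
Your proposal is correct and follows essentially the same approach as the paper's own proof: both hinge on the identity $(\pi_*v)(x)=\sum_{h\in H}\pi_*\big(v(hy_0)\big)$ for locally supported sections (your auxiliary claim, which is the paper's identity \eqref{eq identity5}), together with $\Gamma_1$-equivariance of $T$ and the bijection $H\times S\to\Gamma_1$. The only cosmetic difference is direction: you start from the definition \eqref{eq folding} and simplify to the pointwise formula, whereas the paper starts from the pointwise formula and expands it back to the definition.
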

\begin{proof}
	Observe that for any $j\in I$ and $g\in\Gamma_1$, the summand $T(\phi_j^{g}u)(y_0)$ may only be non-zero if $U_j^g\cap B_{\textnormal{prop}\,T}(y_0)\neq\emptyset$. Since $T$ has finite propagation and the $\Gamma_1$-action is proper, the set
	$$\{(g,j)\,|\,U_j^g\cap B_{\textnormal{prop}\,T}(y_0)\neq\emptyset\}$$
	is finite. Hence the sum in question is finite for every $x\in M_2$ and $y_0\in\pi^{-1}(x)$. 
	
	To prove \eqref{eq pointwise folding}, let us first rewrite the right-hand side as
	\begin{equation}
	\label{eq rewrite}
	\pi_*\Big(\sum_{\substack{j\in I}}\sum_{\substack{h\in H,\\s\in S}}T(\phi_j^{hs}u)(y_0)\Big).
	\end{equation}

	Next, using \eqref{eq identity1}, \eqref{eq identity3}, and $\Gamma_1$-equivariance of $T$, each summand in \eqref{eq rewrite} equals
	\begin{align}
	\label{eq pointwise calculation}
	T(\phi_j^{hs}u)(y_0)&=T\big(h(\phi_j^{s}([h]\cdot u))\big)(y_0)\nonumber\\
	&=T\big(h(\phi_j^{s}u)\big)(y_0)\nonumber\\
	&=h\big(T(\phi_j^{s}u)\big)(y_0)\nonumber\\
	&=\Big(h\cdot\sum_{\substack{i\in I,\\g\in\Gamma_1}}\phi_i^g T\phi_j^{s}u\Big)(y_0),
	\end{align}
	where we have used Convention \ref{conv convention} when writing $\phi_j^s u$. Now observe that if $v$ is a section of $E_1$ supported in some open set $U_i^{g}$, then for any $x\in M_2$, we have
	\begin{equation}
\label{eq identity5}
(\pi_*v)(x)=\pi_*\big(\sum_{h\in H}(hv)(y_0)\big),
\end{equation}
	where $y_0$ is an arbitrary point in the inverse image $\pi^{-1}(x)$. Keeping this in mind while summing \eqref{eq pointwise calculation} over $j\in I$, $s\in S$, and $h\in H$, we see that \eqref{eq rewrite} equals
	\begin{align*}
	\pi_*\Big(\sum_{h,g,s,i,j}\big(h(\phi_i^g T\phi_j^{s}u)\big)(y_0)\Big)
	=\Big(\pi_*\sum_{g,s,i,j}\phi_i^{g}\cdot T(\phi_j^{s}u)\Big)(x),
	\end{align*}
	which is equal to $(\Psi(T)u)(x)$ by \eqref{eq folding} and noting Convention \ref{conv convention}.
\end{proof}
\begin{proposition}
\label{prop folding}
The folding map $\Psi$ defined in \eqref{eq folding} is a $*$-homomorphism
	\begin{align*}
	\Psi\colon\mathcal{B}_{\textnormal{fp}}(L^2(E_1))^{\Gamma_1}&\rightarrow \mathcal{B}_{\textnormal{fp}}(L^2(E_2))^{\Gamma_2}
	\end{align*}
and is independent of the choices of the set $S$ of coset representatives and compatible partitions of unity used to define it.
\end{proposition}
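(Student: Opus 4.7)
The plan is to base the proof on the pointwise formula \eqref{eq pointwise folding}, reinterpreted as the identity $\Psi(T)u = \pi_*(T\tilde u)$, where $\tilde u := \pi^*u$ is the canonical ($H$-invariant but generally non-$L^2$) lift of $u$ to a section of $E_1$. For any $y_0 \in M_1$, set $T\tilde u(y_0) := T(\chi\tilde u)(y_0)$ for any compactly supported $\chi$ equal to $1$ on $B_r(y_0)$ with $r := \textnormal{prop}(T)$; finite propagation renders this independent of $\chi$, and the $H$-equivariance of $T$ inherited from $\Gamma_1$-equivariance forces $T\tilde u$ itself to be $H$-invariant, hence to descend to a section of $E_2$.

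First I would dispatch independence of choices. The set $S$ does not appear in \eqref{eq pointwise folding}, so $\Psi(T)u$ is manifestly $S$-independent. For the partition of unity, at each $y_0$ only finitely many $(j,g)$ contribute to $\sum_{j,g}T(\phi_j^g u)(y_0)$, and their sum equals $T\bigl((\sum_{(j,g)\in A}\phi_j^g)\tilde u\bigr)(y_0)$ for any finite set $A$ enclosing them. Since $\sum_{(j,g)\in A}\phi_j^g \equiv 1$ on $B_r(y_0)$ irrespective of the POU, finite propagation of $T$ forces the expression to agree with $T(\chi\tilde u)(y_0)$ for a canonical cutoff, yielding POU-independence.

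Next, the $\mathcal{B}_{\textnormal{fp}}$-mapping property. Fix a precompact fundamental domain $\mathcal{D}\subset M_1$ for the $H$-action, producing an isometric identification $L^2(E_2)\cong L^2(E_1|_\mathcal{D})$ via $u\leftrightarrow\tilde u|_\mathcal{D}$. Letting $\chi'$ be a cutoff equal to $1$ on the $r$-neighborhood $\mathcal{D}_r$ of $\mathcal{D}$, boundedness follows from
\begin{equation*}
\|\Psi(T)u\|_{L^2(E_2)}^2 = \int_\mathcal{D}|T\tilde u(y_0)|^2\,dy_0 = \int_\mathcal{D}|T(\chi'\tilde u)(y_0)|^2\,dy_0 \leq \|T\|^2\|\chi'\tilde u\|_{L^2(E_1)}^2 \leq N_r\|T\|^2\|u\|_{L^2(E_2)}^2,
\end{equation*}
where $N_r := \#\{h\in H : h\mathcal{D}\cap\mathcal{D}_r\neq\emptyset\}$ is finite by properness of the $H$-action. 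Since $\pi$ is a local isometry, $\textnormal{supp}(\Psi(T)u)\subseteq B_r(\textnormal{supp}\,u)$, giving $\textnormal{prop}(\Psi(T))\leq r$. For $\Gamma_2$-equivariance, the identity $\widetilde{[\gamma]u}=\gamma\tilde u$ (expressing the intertwining relation $\pi_*\gamma=[\gamma]\pi_*$ for the bundle projection $\pi_*\colon E_1\to E_2$) combines with $\Gamma_1$-equivariance of $T$ to give $\Psi(T)([\gamma]u)=[\gamma]\Psi(T)u$.

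Finally, the algebraic properties. Linearity is immediate from the pointwise formula. The key observation for multiplicativity is that $T_2\tilde u$ is $H$-invariant (as $T_2$ is $H$-equivariant and $\tilde u$ is $H$-fixed), so it descends to a section of $E_2$; the pointwise formula identifies this descent as $\Psi(T_2)u$, yielding $T_2\tilde u=\widetilde{\Psi(T_2)u}$ and
\begin{equation*}
\Psi(T_1T_2)u = \pi_*(T_1T_2\tilde u) = \pi_*(T_1\widetilde{\Psi(T_2)u}) = \Psi(T_1)\Psi(T_2)u.
\end{equation*}
For $*$-preservation, expanding $\langle\Psi(T)u,v\rangle_{L^2(E_2)}=\int_\mathcal{D}\langle T\tilde u,\tilde v\rangle\,dy_0$ via the Schwartz kernel of $T$ (formally; extended to general $T\in\mathcal{B}_{\textnormal{fp}}$ by density of smooth-kernel operators, or by direct POU manipulation) produces a double integral of an $H$-invariant integrand over $\mathcal{D}\times M_1$, which by symmetry equals the analogous double integral over $M_1\times\mathcal{D}$ representing $\langle u,\Psi(T^*)v\rangle_{L^2(E_2)}$; both regions are fundamental domains for the diagonal $H$-action on $M_1\times M_1$. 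I expect the $*$-preservation step to be the most delicate, since the chief technical obstacle throughout is that $\tilde u$ and $T\tilde u$ are merely locally $L^2$, which is resolved by consistently localizing via finite propagation to compactly supported cutoffs and invoking $H$-invariance only at the final fundamental-domain stage.
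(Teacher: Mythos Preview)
Your overall strategy—recasting the pointwise formula \eqref{eq pointwise folding} as $\Psi(T)u=\pi_*(T\tilde u)$ with $\tilde u$ the $H$-invariant lift—is essentially a conceptual repackaging of the paper's own approach, and your arguments for independence of choices, $\Gamma_2$-equivariance, and multiplicativity are correct (the identity $T_2\tilde u=\widetilde{\Psi(T_2)u}$ is in fact tidier than the paper's direct manipulation). Your $*$-preservation via diagonal-$H$ fundamental domains is more explicit than the paper's ``one checks directly.''

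There is, however, a genuine gap in your boundedness argument. You posit a \emph{precompact} fundamental domain $\mathcal{D}\subset M_1$ for the $H$-action, but $M_1/H\cong M_2$ is a $\Gamma_2$-cover of $N$ and is non-compact whenever $\Gamma_2$ is infinite—the generic case. Without precompactness, properness of the $H$-action no longer forces $N_r=\#\{h\in H:h\mathcal{D}\cap\mathcal{D}_r\neq\emptyset\}$ to be finite. Concretely, take $\Gamma_1$ free on $a,b$ with $H$ the normal closure of $a$ and $\mathcal{D}$ the $b$-axis in the Cayley tree: each conjugate $b^nab^{-n}\in H$ carries $b^n\in\mathcal{D}$ to a point at distance $1$ from $\mathcal{D}$, so $N_1=\infty$.

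The repair is to replace $N_r$ by the \emph{pointwise} multiplicity $m(y):=\#\{h\in H:hy\in\mathcal{D}_r\}$, which is what actually enters the estimate via $\|\chi'\tilde u\|^2\leq\int_{\mathcal{D}}m(y)\,|\tilde u(y)|^2\,dy$. This is uniformly bounded: writing $y\in s\mathcal{D}_0$ (with $\mathcal{D}_0$ a compact $\Gamma_1$-fundamental domain) and $hy$ within $r$ of $s'\mathcal{D}_0$ forces $(s')^{-1}hs\in F_r:=\{\gamma\in\Gamma_1:\gamma\mathcal{D}_0\cap(\mathcal{D}_0)_r\neq\emptyset\}$; since $h\in H$ the class $[s']=[s][f]^{-1}$ is determined by $f\in F_r$, and then $h=s'fs^{-1}$ is determined, giving $m(y)\leq|F_r|<\infty$. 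The paper itself sidesteps all of this by simply asserting that boundedness ``follows from boundedness and finite propagation of $T$'' without supplying an estimate.
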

\begin{proof}
Boundedness of $\Psi(T)$ follows from boundedness and finite propagation of $T$, via the formula \eqref{eq pointwise folding}.
To see that $\Psi(T)$ is $\Gamma_2$-equivariant, note that by \eqref{eq pointwise folding}, we have for all $[\gamma]\in\Gamma_2$ and $u\in L^2(E_2)$ that
\begin{align*}
	([\gamma]\Psi(T)[\gamma^{-1}]u)(x)&=[\gamma]\cdot\pi_*\sum_{\substack{j\in I,\\g\in\Gamma_1}}T(\phi_j^g[\gamma^{-1}]u)(\gamma^{-1}y_0)\\
	&=[\gamma]\cdot\pi_*\sum_{\substack{j,g}}T\gamma^{-1}(\phi_j^{\gamma g}u)(\gamma^{-1}y_0).
\end{align*}
Using \eqref{eq identity4} and $\Gamma_1$-equivariance of $T$, this is equal to 
\begin{align*}
	\pi_*\sum_{j,g}\gamma T\gamma^{-1}(\phi_j^{\gamma g}u)(y_0)&=\pi_*\sum_{j,g}T(\phi_j^{\gamma g}u)(y_0)=(\Psi(T)u)(x),
\end{align*}
where we used a change-of-variable for the last equality. 

To see that $\Psi$ is a $*$-homomorphism, let $Q$ be another operator in $\mathcal{B}_{\textnormal{fp}}(L^2(E_1))^{\Gamma_1}$. Let $x\in M_2$ and $y_0\in\pi^{-1}(x)$. By \eqref{eq pointwise folding}, and the fact that both $T'$ and $T$ have finite propagation, one sees that there exists a finite subset $F\subseteq\Gamma_1$ such that
\begin{align*}
(\Psi(T'T)u)(x)&=\pi_*\sum_{\substack{j\in I,\\g\in F}}T'T(\phi_j^{g}u)(y_0)\nonumber\\
&=\pi_*\Big(T'\sum_{j,g}T(\phi_j^{g}u)\Big)(y_0)\nonumber\\
&=\pi_*\Big(T'\sum_{\substack{i\in I,\\\gamma\in F}}\phi_i^\gamma\sum_{j,g}T(\phi_j^{g}u)\Big)(y_0)\nonumber\\
&=\pi_*\Big(\sum_{i,\gamma}T'\big(\phi_i^\gamma\sum_{j,g}T(\phi_j^{g}u)\big)\Big)(y_0)\nonumber\\
&=(\Psi(T')\circ\Psi(T)u)(x).
\end{align*}
One checks directly that $\Psi$ respects $*$-operations.

That \eqref{eq folding} is independent of the choice of coset representatives $S\subseteq\Gamma_1$ is implied by the pointwise formula \eqref{eq pointwise folding}. To see that \eqref{eq folding} is independent of partitions of unity, let $\{\varphi_i\}$, $\{\varphi_i^{[\gamma]}\}$, and $\{\varphi_i^{\gamma}\}$ be another set of compatible partitions of unity for $N$, $M_2$, and $M_1$ with the same properties as $\{\phi_i\}$, $\{\phi_i^{[\gamma]}\}$, and $\{\phi_i^{\gamma}\}$. Let us write
$$\Psi^{\{\phi\}}(T)\quad\textnormal{and}\quad\Psi^{\{\varphi\}}(T)$$ 
to distinguish the operators defined by \eqref{eq folding} with respect to these two sets of partitions of unity. Since $T$ has finite propagation, there exists a finite subset $F'\subseteq\Gamma_1$ such that if $g\notin F$, then $\supp(\phi_j^g)\cap B_{\textnormal{prop}\,T}(y_0)=\emptyset$ and $\supp(\varphi_j^g)\cap B_{\textnormal{prop}\,T}(y_0)=\emptyset$ for any $j\in I$. By \eqref{eq pointwise folding}, for each $x\in M_2$ we have
\begin{align}
\label{eq POU}
(\Psi^{\{\phi\}}(T)u)(x)&=\pi_*\sum_{j\in I,\,g\in\Gamma_1}T(\phi_j^{g}u)(y_0)\nonumber\\
&=\pi_*\sum_{j\in I,\,g\in F}T(\mathbbm{1}_{B_{\textnormal{prop}\,T}(y_0)}\phi_j^{g}u)(y_0)\nonumber\\
&=\pi_*\Big(\big(T\sum_{\substack{j,g}}\mathbbm{1}_{B_{\textnormal{prop}\,T}(y_0)}\phi_j^{g}u\big)(y_0)\Big),
\end{align}
where $\mathbbm{1}_{B_{\textnormal{prop}\,T}(y_0)}$ is the characteristic function of the set $B_{\textnormal{prop}\,T}(y_0)$, and we have used that $T$ commutes with finite sums. Likewise, we have
\begin{align}
\label{eq POU 2}
(\Psi^{\{\varphi\}}(T)u)(x)&=\pi_*\Big(\big(T\sum_{\substack{j,g}}\mathbbm{1}_{B_{\textnormal{prop}\,T}(y_0)}\varphi_j^{g}u\big)(y_0)\Big).
\end{align}
Now observe that
$$\sum_{j, g}\mathbbm{1}_{B_{\textnormal{prop}\,T}(y_0)}\phi_j^{g}u\quad\textnormal{and}\quad\sum_{j, g}\mathbbm{1}_{B_{\textnormal{prop}\,T}(y_0)}\varphi_j^{g}u$$
are both equal to the lift of the section $u$ to $M_1$ restricted to the compact subset $B_{\textnormal{prop}\,T}(y_0)$. Thus the right-hand sides of \eqref{eq POU} and \eqref{eq POU 2} are equal, whence
\begin{align*}
(\Psi^{\{\phi\}}(T)(u))(x)
	=(\Psi^{\{\varphi\}}(T)(u))(x). &\qedhere
\end{align*}
\end{proof}
The next proposition shows that the folding map $\Psi$ preserves locality of operators. In particular, this means that $\Psi$ induces a map at the level of localization algebras (see Definition \ref{def folding localization}), which is a crucial ingredient for our main result, Theorem \ref{thm funct higher rho} .
\begin{proposition}
\label{prop prop}
For any operator $T\in\mathcal{B}_{\textnormal{fp}}(L^2(E_1))^{\Gamma_1}$, we have
$$\textnormal{prop}\big(\Psi(T)\big)\leq\textnormal{prop}(T).$$	
\end{proposition}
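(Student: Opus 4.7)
The plan is to work directly from the pointwise formula \eqref{eq pointwise folding}, taking advantage of the key metric fact that $\pi\colon M_1\to M_2$ is a quotient by isometries, so $d_{M_2}(\pi(\tilde x),\pi(\tilde y))\leq d_{M_1}(\tilde x,\tilde y)$ for any lifts. The propagation statement should drop out from this inequality combined with the pointwise formula, without needing any subtle manipulation of infinite sums.

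Concretely, set $r=\textnormal{prop}(T)$ and fix any pair $(x_0,y_0)\in M_2\times M_2$ with $d_{M_2}(x_0,y_0)>r$. Choose neighborhoods $U\ni x_0$ and $V\ni y_0$ small enough (say of diameter less than $\tfrac{1}{2}(d_{M_2}(x_0,y_0)-r)$) so that $d_{M_2}(x',y')>r$ for every $x'\in U$ and every $y'\in V$. To verify $(x_0,y_0)\notin\textnormal{supp}(\Psi(T))$ it suffices to show $\mathbbm{1}_U\,\Psi(T)\,\mathbbm{1}_V=0$, i.e.\ that $(\Psi(T)u)(x')=0$ for every $u\in L^2(E_2)$ supported in $V$ and every $x'\in U$; one can then convert this to the required $f_1,f_2\in C_0(M_2)$ by a standard bump-function argument.

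Fix such $u$ and $x'$, and pick any lift $\tilde x'\in\pi^{-1}(x')$. By the pointwise formula \eqref{eq pointwise folding},
\begin{equation*}
(\Psi(T)u)(x')=\pi_*\Big(\sum_{\substack{j\in I,\\g\in\Gamma_1}}T(\phi_j^gu)(\tilde x')\Big),
\end{equation*}
and this sum is finite. I claim every summand vanishes. Indeed, by Convention \ref{conv convention} the section $\phi_j^gu$ is (the extension by zero of) $\pi|_{U_j^g}^*(\phi_j^{[g]}u)$, hence is supported in $U_j^g\cap\pi^{-1}(V)$. If $T(\phi_j^gu)(\tilde x')\neq 0$, then since $\textnormal{prop}(T)\leq r$ there must exist a point $\tilde z$ in the support of $\phi_j^gu$ with $d_{M_1}(\tilde x',\tilde z)\leq r$. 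But then $\pi(\tilde z)\in V$ and
\begin{equation*}
d_{M_2}(x',\pi(\tilde z))=d_{M_2}(\pi(\tilde x'),\pi(\tilde z))\leq d_{M_1}(\tilde x',\tilde z)\leq r,
\end{equation*}
contradicting the choice of $U$ and $V$. Hence each summand is zero, so $(\Psi(T)u)(x')=0$ as required, and $\textnormal{prop}(\Psi(T))\leq r$.

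There is no real obstacle here: once one is armed with the pointwise formula of the preceding lemma, the argument is essentially a one-line application of the metric inequality $d_{M_2}\circ(\pi\times\pi)\leq d_{M_1}$. The only point requiring mild care is to avoid manipulating the a priori $\Gamma_1$-infinite sum $\sum_{j,g}\phi_j^gu$ as a single element of $L^2(E_1)$ (since $\pi^\ast u$ need not be $L^2$); working summand-by-summand, as above, sidesteps this issue entirely.
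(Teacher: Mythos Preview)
Your proof is correct and follows essentially the same approach as the paper: both arguments use the pointwise formula \eqref{eq pointwise folding} and the metric inequality $d_{M_2}(\pi(\tilde x),\pi(\tilde y))\leq d_{M_1}(\tilde x,\tilde y)$ to show that each summand $T(\phi_j^g u)(\tilde x')$ vanishes whenever $x'$ is far from $\supp u$. The paper phrases this slightly more directly---it simply takes $u\in C_c(E_2)$ and argues that $\Psi(T)u(x)=0$ once $d_{M_2}(x,\supp u)>\propagation(T)$---whereas you unpack the definition of support via neighborhoods $U,V$, but this is a cosmetic difference.
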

\begin{proof}
Take a section $u\in C_c(E_2)$, and write it as a finite sum $\sum_{j,[g]}\phi_j^{[g]} u$. By \eqref{eq pointwise folding},
$$\big(\Psi(T)u)(x)=\pi_*\sum_{j\in I, g\in\Gamma_1}T(\phi_j^{g}u)(y_0),$$ 
where $y_0$ is a lift of $x$ to $M_1$. Now if $d_{M_2}(x,\supp u)>\propagation(T)$, then
$$d_{M_1}\big(y_0,\supp(\phi_j^g u)\big)>\propagation(T)$$
for any $j\in I$ and $g\in\Gamma_1$, since distances between points do not increase under the projection $\pi$. Thus if $d_{M_2}(x,\supp u)>\propagation(T)$, then $\Psi(T)u(x)=0$. Since this holds for any section $u\in C_c(E_2)$, we have $\propagation(\Psi(T))\leq\propagation(T)$.
\end{proof}

\begin{proposition}
	\label{prop psi surj}
	The map $\Psi$ is surjective and preserves local compactness of operators. Moreover, it restricts to a surjective $*$-homomorphism
	$$\Psi\colon\mathbb{C}[M_1]^{\Gamma_1}\rightarrow\mathbb{C}[M_2]^{\Gamma_2}.$$
\end{proposition}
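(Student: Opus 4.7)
The plan is to handle the three assertions in order: surjectivity on $\mathcal{B}_{\textnormal{fp}}$, preservation of local compactness, and then surjectivity on the algebraic Roe algebras. The third will follow from the first two together with the observation that the preimage construction in (i) produces a locally compact operator whenever the target is. Most of the work is in (i).

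For surjectivity, given $S \in \mathcal{B}_{\textnormal{fp}}(L^2(E_2))^{\Gamma_2}$ with $\propagation(S) \leq r$, I would construct a preimage $T$ by unfolding $S$ locally via $\pi$. The key observation is that each localized piece $\phi_i^{[g]} S \phi_j^{[g']}$ is supported on a product of evenly covered open sets, hence lifts uniquely to any chosen pair of sheets $U_i^g \times U_j^{g'}$ via the local isomorphisms $\pi|_{U_i^g}^*$ and $\pi|_{U_j^{g'}}^*$. The cleanest implementation is at the kernel level (Remark \ref{rem kernel algebra}): by refining the cover $\mathcal{U}_N$ one may assume $r < \epsilon$, so there is a canonical local lift $\tilde k_S$ of $k_S$ defined on pairs $(x,y) \in M_1 \times M_1$ with $d_{M_1}(x,y) \leq r$. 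I would then choose a $\Gamma_1$-invariant function $\rho \colon M_1 \times M_1 \to [0,1]$ on this neighborhood of the diagonal, encoding an $H$-fundamental cutoff in such a way that $\sum_{h \in H} \rho(x, hy) = 1$, and define $k_T := \rho\, \tilde k_S$. By construction $k_T$ is $\Gamma_1$-equivariant, has propagation at most $r$, and satisfies $\sum_{h \in H} k_T(x, hy) = k_S([x], [y])$, which translates via the pointwise formula \eqref{eq pointwise folding} into $\Psi(T) = S$.

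For (ii), if $T$ is locally compact with finite propagation and $f \in C_0(M_2)$, then \eqref{eq pointwise folding} exhibits $f \Psi(T)$ as a $\pi$-pushforward of a finite sum $\sum_{j,g} \tilde f \cdot T \phi_j^g$, where only finitely many terms can contribute near any compact subset of $M_1$ by finite propagation and properness of the $\Gamma_1$-action. Each summand is compact because $T$ is locally compact and each $\phi_j^g$ has compact support; hence $f \Psi(T)$ is compact, and the argument for $\Psi(T) f$ is analogous. Then (iii) follows: the lift $T$ constructed in (i) from a locally compact $S$ is itself locally compact, because $\rho$ is bounded and the local identifications preserve local compactness at the kernel level.

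The main obstacle is the construction and verification in (i), specifically the interplay between $\Gamma_1$-equivariance of $T$ and the $H$-summation appearing in $\Psi$. The function $\rho$ must be arranged so that it is simultaneously $\Gamma_1$-invariant and $H$-averaging to $1$ on the propagation-$r$ neighborhood of the diagonal; this can be achieved by starting with the characteristic function of an $H$-fundamental subregion of $M_1 \times M_1$ near the diagonal and then $\Gamma_1$-equivariantizing, using normality of $H \leq \Gamma_1$ to reconcile the two group actions. Verifying that the resulting $k_T$ is well-defined, $\Gamma_1$-equivariant, and satisfies the folding identity is the technically delicate step.
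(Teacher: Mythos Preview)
Your strategy is sound in spirit, but two points need attention.

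First, the claim ``by refining the cover $\mathcal{U}_N$ one may assume $r < \epsilon$'' is incorrect: $r = \propagation(S)$ is determined by $S$, and $\epsilon$ is the even-covering radius of the projections $p_j$; refining $\mathcal{U}_N$ changes neither. Fortunately this assumption is also unnecessary. The pointwise lift $\tilde k_S(x,y) := k_S(\pi(x),\pi(y))$ (with the natural bundle identifications) makes sense for any pair $(x,y)$, and your $\rho$-construction goes through for arbitrary $r$ once you exhibit a $\Gamma_1$-invariant $\rho$ supported in $\{d_{M_1}\leq r\}$ with $\sum_{h\in H}\rho(x,hy)=1$ whenever $d_{M_2}([x],[y])\leq r$. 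For instance $\rho(x,y)=1/|\{h\in H:d_{M_1}(hx,y)\leq r\}|$ on $\{d_{M_1}\leq r\}$ works, with normality of $H$ giving $\Gamma_1$-invariance.

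Second, and more seriously, working at the kernel level only proves surjectivity onto the smooth-kernel subalgebra $\mathcal{S}^{\Gamma_2}$, whereas the proposition asserts surjectivity onto all of $\mathcal{B}_{\textnormal{fp}}(L^2(E_2))^{\Gamma_2}$; general finite-propagation operators need not be given by function kernels. The paper avoids this by writing down an explicit operator-level preimage: fixing coset representatives $S\subseteq\Gamma_1$ for $H\backslash\Gamma_1$, it sets
\[
T_1(v)\;=\;\sum_{g\in\Gamma_1,\,i\in I}\;\sum_{s\in S,\,j\in I}\phi_j^{gs}\,T_2\bigl(\phi_i^{[g]}v\bigr),
\]
where restricting the output index to $s\in S$ rather than all of $\Gamma_1$ plays exactly the role of your cutoff $\rho$---it selects one $H$-sheet per orbit. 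This formula applies to any $T_2\in\mathcal{B}_{\textnormal{fp}}$, and the verifications of boundedness, $\Gamma_1$-equivariance, finite propagation, and $\Psi(T_1)=T_2$ become direct (if notationally heavy) computations with the partitions of unity. Your own operator-level sketch via localized pieces $\phi_i^{[g]}S\phi_j^{[g']}$ is essentially this construction; developing it rather than retreating to kernels would close the gap. Your arguments for preservation of local compactness and for the locally compact lift in part (iii) are correct and match the paper's.
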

\begin{proof}
	Let $S\subseteq\Gamma_1$ be the set of coset representatives used in the definition of $\Psi$. For any $T_2\in\mathcal{B}_{\textnormal{fp}}(L^2(E_2))^{\Gamma_2}$, define an operator $T_1$ on $L^2(E_1)$ by the formula
	\begin{equation}
	\label{eq T1}
	T_1(v)\coloneqq\sum_{\substack{g\in\Gamma_1\\i\in I\,\,}}\sum_{\substack{s\in S\\j\in I}}\phi_j^{gs}T_2\big(\phi_i^{[g]} v\big),
	\end{equation}
	for $v\in L^2(E_1)$. 
	We claim that $T_1\in\mathcal{B}_{\textnormal{fp}}(L^2(E_1))^{\Gamma_1}$ and that $\Psi(T_1)=T_2$. Indeed, for each fixed $i$ and $j$, the sum of operators
	$$\sum_{\substack{g\in\Gamma_1}}\sum_{\substack{s\in S}}\phi_j^{gs}T_2\phi_i^{[g]}$$
	converges strongly in $\mathcal{B}(L^2(E_1))$. Since $I$ is a finite indexing set, it follows that $T_1$ is bounded. To see that $T_1$ is $\Gamma_1$-equivariant, note that for any $\gamma\in\Gamma_1$, we have, by the identities \eqref{eq identity2} and \eqref{eq identity4}, that	\begin{align*}
	\gamma^{-1}T_1\gamma(v)&=\sum_{g,i}\sum_{s,j}\gamma^{-1}\big(\phi_j^{gs}T_2(\phi_i^{[g]}\gamma v)\big)\\
	&=\sum_{g,i}\sum_{s,j}\gamma^{-1}\big(\phi_j^{gs}T_2(\pi_*(\gamma\cdot(\phi_i^{\gamma^{-1}g}v)))\big)\\
	&=\sum_{g,i}\sum_{s,j}\gamma^{-1}\big(\phi_j^{gs}T_2([\gamma]\cdot(\phi_i^{[\gamma^{-1}g]}v))\big),
	\end{align*}
	where we observe Convention \ref{conv convention} as usual. Using $\Gamma_2$-equivariance of $T_2$ and \eqref{eq identity1}, one finds this to be equal to
	\begin{align*}
	&\sum_{g,i}\sum_{s,j}\gamma^{-1}\big(\phi_j^{[gs]}([\gamma] T_2(\phi_i^{[\gamma^{-1}g]}v))\big)\\
	=&\sum_{g,i}\sum_{s,j}\phi_j^{\gamma^{-1}gs} T_2(\phi_i^{\gamma^{-1}g}v)\\
	=&\sum_{\gamma^{-1}g,i}\sum_{s,j}\phi_j^{\gamma^{-1}gs} T_2(\phi_i^{[\gamma^{-1}g]}v)\\
	=&\,\,T_1(v),
	\end{align*}
	using the definition of $T_1$ and a change of variable for the last equality. 
	
	Next, finite propagation of $T_1$ follows from finite propagation of $T_2$. Indeed, for any $v\in C_c(E_2)$, $g\in\Gamma_1$, and $i\in I$, the set
	$$S_{T_1}\coloneqq\left\{s\in S\,\big|\,\phi_j^{[gs]}T_2\big(\phi_i^{[g]} v\big)\neq 0\textnormal{ for some }j\in I\right\}$$
	is finite, thus the sum over $S$ in equation \eqref{eq T1} reduces to a finite sum over $S_{T_1}$. By the same equation, and the fact that the $\Gamma_1$-action is isometric, we have
	\begin{align*}
	\textnormal{prop}(T_1)&\leq\sup_{\substack{\,\,s\in S_{T_1}\\i,j\in I}}\{d_{M_1}\big(U_i^g,U_j^{gs}\big)\}=\sup_{\substack{\,\,s\in S_{T_1}\\i,j\in I}}\left\{d_{M_1}\big(U_i^e,U_j^{s}\big)\right\}<\infty.
	\end{align*}
	
	We now show that $\Psi(T_1)=T_2$. By \eqref{eq T1} and \eqref{eq folding complicated} we have, for any $u\in C_c(E_2)$,
	\begin{align*}
	\Psi(T_1)u&=\sum_{\substack{\gamma\in\Gamma_1,\\t\in S}}\sum_{i,j\in I}\phi_i^{[\gamma]}T_1\phi_j^{t}(u)\\
	&=\sum_{\substack{\gamma\in\Gamma_1,\\t\in S}}\sum_{i,j\in I}\phi_i^{[\gamma]}\Big(\sum_{\substack{g\in\Gamma_1\\k\in I\,\,}}\sum_{\substack{s\in S\\l\in I}}\phi_l^{gs}T_2\big(\phi_k^{[g]}(\phi_j^{t}u)\big)\Big).
	\end{align*}
	Finite propagation of $T_2$ and compact support of $u$ imply that all of these sums are finite, so by \eqref{eq identity3} this is equal to
	\begin{align*}
	\sum_{g,s,k,l}\sum_{\gamma,t,i,j}\phi_i^{[\gamma]}\big(\phi_l^{gs}T_2\big(\phi_k^{[g]}(\phi_j^{t}u)\big)\big)=\sum_{g,s,k,l}\sum_{t,j}\phi_l^{[gs]}T_2\big(\phi_k^{[g]}(\phi_j^{t}u)\big).
	\end{align*}
	Since $\{gs\,|\,s\in S\}$ is a set of coset representatives for $H\backslash\Gamma_1$ for any $g\in\Gamma_1$, the identity \eqref{eq identity3} implies that the above is equal to
	\begin{align*}
		\sum_{t,j}T_2\Big(\sum_{g,k}\phi_k^{[g]}(\phi_j^{t}u)\Big)=\sum_{t,j}T_2(\phi_j^{[t]}u)=T_2 u.
	\end{align*}
	Thus $\Psi(T_1)=T_2$ as bounded operators on $L^2(E_2)$.
	
	Finally, that $\Psi$ preserves local compactness of operators follows directly from \eqref{eq pointwise folding}, so that $\Psi$ restricts to a map $\mathbb{C}[M_1]^{\Gamma_1}\rightarrow\mathbb{C}[M_2]^{\Gamma_2}$. To see that this map is surjective, suppose $T_2$ is locally compact. Then for any $f\in C_c(M_1)$, we have
	\begin{equation*}
	f T_1=f\sum_{\substack{g\in\Gamma_1\\s\in S\,\,}}\sum_{\substack{i,j\in I}}\phi_j^{gs}T_2\phi_i^{[g]}.
	\end{equation*}
	The sum over $\Gamma_1$ reduces to a sum over the finite set
	$$F_f\coloneqq\left\{g\in\Gamma_1\,\big|\,U_j^{gs}\cap\supp(f)\neq\emptyset\textnormal{ for some }j\in I\right\},$$
	hence $f T_1$ is equal to
	\begin{align*}
	\sum_{\substack{g\in F_f\\s\in S\,\,}}\sum_{\substack{i,j\in I}}f\phi_j^{gs}\mathbbm{1}_{\pi_*(\supp f)}T_2\phi_i^{[g]}.
	\end{align*}
	Since the subset $\pi_*(\supp f)\subseteq M_2$ is compact, local compactness of $T_2$ means that this is a finite sum of compact operators and hence compact. A similar argument shows that the operator $T_1 f$ is compact. Thus the operator $T_1$ is locally compact, so $\Psi$ restricts to a surjective $*$-homomorphism $\mathbb{C}[M_1]^{\Gamma_1}\rightarrow\mathbb{C}[M_2]^{\Gamma_2}$.
\end{proof}

\hfill\vskip 0.1in
\subsection{Description of the folding map on invariant sections}
\label{subsec desc op}
 	\hfill\vskip 0.05in
	\noindent The folding map $\Psi$ admits an equivalent description using $N$-invariant sections of $E_1$. Such sections, while not in general square-integrable over $M_1$, form a space that is naturally isomorphic to $L^2(E_2)^{\Gamma_2}$, as we now describe. This allows computations involving $\Psi$ to be carried out entirely on $M_1$. Thus it may be a useful perspective for some applications. The discussion below slightly generalizes the averaging map from \cite[subsection 5.2]{GHM3} applied in the context of discrete groups.

Let $c\colon M_1\rightarrow [0,1]$ be a function whose support has compact intersections with every $H$-orbit, and such that for all $x \in M_1$,
\begin{equation*}
\sum_{h\in H}c(h x)^2 = 1.
\end{equation*}
Note that this sum is finite by properness of the $\Gamma_1$-action. 

Let $C_{\tc}(E_1)^{H}$ denote the space of \emph{$H$-transversally compactly supported} sections of $E_1$, defined as the space of continuous, $H$-invariant sections of $E_1$ whose supports have compact images in $M_2\cong M_1/H$ under the quotient map. Let $L^2_T(E_1)^{H}$ denote the Hilbert space of $H$-invariant, \emph{$H$-transversally $L^2$-sections of $E_1$}, defined as the completion of $C_{\tc}(E_1)^{H}$ with respect to the inner product
\[
(s_1, s_2)_{L^2_T(E_1)^N} \coloneqq (c s_1, c s_2)_{L^2(E_1)}.
\]
Then one checks that:
\begin{lemma}
The space $L^2_T(E_1)^{H}$ is naturally unitarily isomorphic to $L^2(E_2)$ and is independent of the choice of $c$.
\end{lemma}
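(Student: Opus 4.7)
The plan is to exhibit a natural unitary isomorphism
\[
\Theta \colon L^2_T(E_1)^H \xrightarrow{\;\cong\;} L^2(E_2)
\]
by descending $H$-invariant sections on $M_1$ to sections on the quotient $M_2 = M_1/H$, and then verify the isometry property by integrating against the cutoff $c$ over a fundamental domain. Since $H$ acts freely and properly on $M_1$, the bundle $E_1$ is the pullback of $E_2$ via $\pi\colon M_1 \to M_2$, so any $H$-invariant continuous section $s$ of $E_1$ canonically descends to a continuous section $\bar s$ of $E_2$ characterized by $\bar s(\pi(x)) = \pi_*(s(x))$. The support condition in the definition of $C_{\tc}(E_1)^H$ guarantees $\bar s \in C_c(E_2)$, giving a well-defined map $\Theta\colon C_{\tc}(E_1)^H \to C_c(E_2)$.

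The main step is showing $\Theta$ is an isometry with respect to the prescribed inner products. Pick a Borel fundamental domain $\mathcal{D} \subseteq M_1$ for the $H$-action, so $\pi$ restricts to a Borel isomorphism $\mathcal{D} \xrightarrow{\cong} M_2$ that is a local isometry. For $s_1, s_2 \in C_{\tc}(E_1)^H$, compute
\begin{align*}
(s_1, s_2)_{L^2_T(E_1)^H}
&= \int_{M_1} c(x)^2\, \langle s_1(x), s_2(x)\rangle \, dx \\
&= \sum_{h \in H} \int_{\mathcal{D}} c(hx)^2\, \langle s_1(hx), s_2(hx)\rangle\, dx \\
&= \int_{\mathcal{D}} \Bigl(\sum_{h \in H} c(hx)^2\Bigr) \langle s_1(x), s_2(x)\rangle\, dx \\
&= \int_{\mathcal{D}} \langle s_1(x), s_2(x)\rangle \, dx = \int_{M_2} \langle \bar s_1(y), \bar s_2(y)\rangle\, dy,
\end{align*}
where the second line uses the unfolding formula for integration over $M_1$, the third uses $H$-invariance of $s_i$, and the fourth uses $\sum_h c(hx)^2 = 1$. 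This identifies $(s_1, s_2)_{L^2_T(E_1)^H}$ with $(\bar s_1, \bar s_2)_{L^2(E_2)}$, proving isometry on the dense subspace $C_{\tc}(E_1)^H$.

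Since $\Theta$ is isometric on a dense subspace, it extends to an isometric embedding of the completion $L^2_T(E_1)^H$ into $L^2(E_2)$. The image contains $C_c(E_2)$ (given $u \in C_c(E_2)$, the pullback $\pi^* u$ lies in $C_{\tc}(E_1)^H$ and satisfies $\overline{\pi^* u} = u$), which is dense in $L^2(E_2)$, so $\Theta$ is surjective and therefore a unitary isomorphism. Finally, independence of $c$ is immediate from the above calculation: the right-hand side $(\bar s_1, \bar s_2)_{L^2(E_2)}$ does not involve $c$, so the $L^2_T$-inner product on $C_{\tc}(E_1)^H$, and hence the completed Hilbert space, is intrinsic. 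The only subtle point in the argument is the interchange of sum and integral in the unfolding step, which is justified by the $H$-transversal compact support of $s_1, s_2$ together with the properness of the $H$-action — this makes the sum over $h$ locally finite on $\mathcal{D}$.
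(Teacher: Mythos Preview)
Your proof is correct and follows the natural approach: descend $H$-invariant sections along $\pi$ and verify the isometry via the unfolding identity $\sum_{h\in H} c(hx)^2=1$ over a fundamental domain. The paper does not actually supply a proof of this lemma (it is stated after ``one checks that''), so your argument fills in precisely the routine verification the authors had in mind.
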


For any $T\in\mathcal{B}_{\textnormal{fp}}(L^2(E_1))^{\Gamma_1}$, the operator $\Psi(T)$ on $L^2(E_2)$ from \eqref{eq folding} can be described equivalently by its action on the isomorphic space $L^2_T(E_1)^H$ as follows. Take a partition of unity $\{\phi_i^g\}$ of $M_1$ as in \eqref{eq POU M1}. Let $s\in L^2_T(E_1)^{H}$ be an $H$-transversally $L^2$-section of $E_1$. For any $y\in M_1$, set
\begin{equation} 
\label{eq fold T}
(\Psi(T)s)(y) \coloneqq \sum_{i\in I, g\in\Gamma_1}\left(T(\phi_i^g s\right)(y)).
\end{equation}
The fact that $T$ has finite propagation means that this pointwise sum is finite, and that $c(\Psi(T)s)\in L^2(E_1)$. Further, $\Psi(T)s$ is an element of $L^2_T(E_1)^H$, since for any $h\in H$ and $y\in M_1$ we have
\[
(h(\Psi(T)s))(y) = \sum_{i\in I, g\in\Gamma_1}\left(T\circ h(\phi_i^g s\right)(y))=\sum_{i\in I, g\in\Gamma_1}T(\phi_i^{hg} s)(y)=(\Psi(T)s)(y),
\]
where we have used the equivariance properties of $T$ and $s$. Finally, a direct comparison shows that this definition is equivalent with that given by equation \eqref{eq pointwise folding}.

\hfill\vskip 0.1in	
\subsection{Induced maps on geometric $C^*$-algebras and $K$-theory}
\label{subsec induced}
 	\hfill\vskip 0.05in
	\noindent By Proposition \ref{prop psi surj}, the folding map $\Psi$ restricts to a surjective $*$-homomorphism 
		$$\Psi\colon\mathbb{C}[M_1]^{\Gamma_1}\rightarrow\mathbb{C}[M_2]^{\Gamma_2}.$$
	By the defining property of maximal completions of $*$-algebras, $\Psi$ extends to a surjective $*$-homomorphism between maximal equivariant Roe algebras,
$$\Psi\colon C^*_{\textnormal{max}}(M_1)^{\Gamma_1}\rightarrow C^*_{\textnormal{max}}(M_2)^{\Gamma_2}.$$

	In this paper we will make use of a number natural extensions of this map to other geometric $C^*$-algebras, as well as the induced maps on $K$-theory. We refer to these maps collectively as \emph{folding maps}.

To begin, we have the following elementary lemma:
\begin{lemma}
	\label{lem surjective}
	Let $A$ and $B$ be $C^*$-algebras and let $\phi\colon A\rightarrow B$ be a surjective $*$-homomorphism. Then $\phi$ extends to a $*$-homomorphism $\widetilde\phi\colon \mathcal{M}(A)\rightarrow\mathcal{M}(B)$.
\end{lemma}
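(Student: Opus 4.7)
The plan is to realize $B$ as a quotient of $A$ and then show that multipliers of $A$ descend to multipliers of the quotient. Concretely, let $I \coloneqq \ker\phi$, a closed two-sided ideal of $A$, so that $\phi$ factors as $A \twoheadrightarrow A/I \xrightarrow{\cong} B$. I will construct a $*$-homomorphism $\mathcal{M}(A) \to \mathcal{M}(A/I)$ and compose with the induced isomorphism on multiplier algebras to obtain $\widetilde\phi$.

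The key step, which I expect to be the main (and only nontrivial) point, is to verify that every $m \in \mathcal{M}(A)$ preserves $I$, i.e.\ $mI \subseteq I$ and $Im \subseteq I$. For this I would use that $I$, being a C*-algebra in its own right, admits a bounded approximate identity $(e_\lambda)$. Given $x \in I$, write $x = \lim_\lambda e_\lambda x$, so
\[
mx = \lim_\lambda (m e_\lambda)\, x.
\]
Since $e_\lambda \in A$ and $m$ is a multiplier, $m e_\lambda \in A$; hence $(m e_\lambda)x \in A \cdot I \subseteq I$, and closedness of $I$ gives $mx \in I$. The inclusion $Im \subseteq I$ follows by the symmetric argument using $x = \lim_\lambda x e_\lambda$.

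Once the ideal is preserved, the left and right multiplication operators $L_m, R_m\colon A \to A$ associated with $m$ descend to bounded linear maps $\overline{L}_m, \overline{R}_m\colon A/I \to A/I$. The norm bounds $\|\overline{L}_m\|, \|\overline{R}_m\| \le \|m\|$ follow from the quotient-norm estimate $\|[ma]\| = \inf_{y \in I}\|ma + my\| \le \|m\|\inf_{y\in I}\|a+y\| = \|m\|\,\|[a]\|$ (using $my \in I$), and the double-centralizer identity $aL_m(b) = R_m(a)b$ descends directly to $A/I$. Thus $(\overline{L}_m, \overline{R}_m)$ defines an element of $\mathcal{M}(A/I)$, and the assignment $m \mapsto (\overline{L}_m, \overline{R}_m)$ is easily checked to be a $*$-homomorphism that agrees with the natural map $A \hookrightarrow \mathcal{M}(A) \to \mathcal{M}(A/I)$ on $A$ itself. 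Composing with $\mathcal{M}(A/I) \xrightarrow{\cong} \mathcal{M}(B)$ induced by the isomorphism $A/I \cong B$ yields the desired extension $\widetilde\phi$.
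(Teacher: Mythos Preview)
Your proof is correct and takes a genuinely different route from the paper's. The paper realizes $\mathcal{M}(B)$ concretely: it picks a faithful nondegenerate representation $\sigma\colon B\to\mathcal{B}(H)$, notes that $\sigma\circ\phi$ is then a nondegenerate representation of $A$, and invokes the standard extension of nondegenerate representations to multiplier algebras, landing in the idealizer of $\sigma(B)$, which is $\mathcal{M}(B)$. Your approach is purely algebraic: you show directly, via an approximate identity for $I=\ker\phi$, that multipliers of $A$ preserve $I$, and then descend the double centralizer to $A/I\cong B$. The paper's argument is shorter because it outsources the work to the extension theorem for nondegenerate representations (and cites Davidson), but that theorem itself rests on the same approximate-identity computation you carry out explicitly. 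Your version has the advantage of being self-contained and avoiding any choice of Hilbert-space representation; the paper's version has the advantage of giving the defining formula $\widetilde\phi(m)\phi(a)=\phi(ma)$ immediately, which is what gets used downstream.
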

\begin{proof}
	Since $\phi$ is surjective, we may define $\widetilde{\phi}$ by requiring 
	$$\widetilde{\phi}(m)\phi(a)=\phi(ma),\quad\phi(a)\widetilde{\phi}(m)=\phi(am),$$ 
	for $m\in\mathcal{A}$. Concretely, by way of a faithful non-degenerate representation $\sigma\colon B\rightarrow\mathcal{B}(H)$ on some Hilbert space $H$, we may identify $B$ with a subalgebra of $\mathcal{B}(H)$. Since $\phi$ is surjective, the composition $\sigma\circ\phi\colon A\rightarrow\mathcal{B}(H)$ is a non-degenerate representation of $A$. Thus the above formulas define a representation of $\mathcal{M}(A)$ with values in the idealizer of $\sigma(B)$, which one identifies with $\mathcal{M}(B)$. (See also \cite{Davidson} Lemma I.9.14.)
\end{proof}

By Lemma \ref{lem surjective} and Proposition \ref{prop psi surj}, the map $\Psi$ extends to a $*$-homomorphism 
$$\mathcal{M}\left(C^*_{\textnormal{max}}(M_1)^{\Gamma_1}\right)\rightarrow\mathcal{M}\left(C^*_{\textnormal{max}}(M_2)^{\Gamma_2}\right).$$
Viewing $C^*_{\textnormal{max}}(M_j)^{\Gamma_j}$ as an ideal in $\mathcal{M}$, we will still denote this extended map by $\Psi$. This map will be essential when we apply the functional calculus for the maximal Roe algebra in sections \ref{sec higher ind} and \ref{sec higher rho}.

Next, suppose we have a path $r\colon [0,\infty)\to C^*_{\textnormal{max}}(M_1)^{\Gamma_1}$ that satisfies
$$\textnormal{prop}(r(t))\to 0\quad\textnormal{as}\quad t\to\infty.$$ 
Then by Proposition \ref{prop prop}, the same is true of the path $\Psi\circ r\colon [0,\infty)\rightarrow C^*_{\textnormal{max}}(M_1)^{\Gamma_2}$. This allows the folding map to pass to the level of localization algebras:
\begin{definition}
\label{def folding localization}
Define the map
\begin{align*}
\Psi_L\colon C^*_{L,\textnormal{max}}(M_1)^{\Gamma_1}&\to C^*_{L,\textnormal{max}}(M_2)^{\Gamma_2}\\
r&\mapsto\Psi\circ r.
\end{align*}
Then $\Psi_L$ restricts to a $*$-homomorphism between obstruction algebras:
$$\Psi_{L,0}\colon C^*_{L,0,\textnormal{max}}(M_1)^{\Gamma_1}\to C^*_{L,0,\textnormal{max}}(M_2)^{\Gamma_2}.$$
\end{definition}

Each of the above maps $\Psi$, $\Psi_L$, and $\Psi_{L,0}$ induces a map at the level of $K$-theory. In this paper, we will make use of two of them:
\begin{align*}
	\Psi_*\colon K_{\bullet}\left(C^*_{\textnormal{max}}(M_1)^{\Gamma_1}\right)&\rightarrow K_{\bullet}\left(C^*_{\textnormal{max}}(M_2)^{\Gamma_2}\right),\\
	(\Psi_{L,0})_*\colon K_{\bullet}\left(C^*_{L,0,\textnormal{max}}(M_1)^{\Gamma_1}\right)&\rightarrow K_{\bullet}\left(C^*_{L,0,\textnormal{max}}(M_2)^{\Gamma_2}\right).
\end{align*}
\begin{remark}
In the recent paper \cite{SchickStudent}, Schick and Seyedhosseini have constructed a slightly different completion of the equivariant algebraic Roe algebra, called the \emph{quotient completion}, which still behaves functorially under maps between covering spaces. For details of the construction, see \cite[section 3]{SchickStudent}.
\end{remark}

\hfill\vskip 0.2in
\section{Functional calculus and the wave operator}
\label{sec funct calc}
We now develop the analytical properties of the wave operator on the maximal Roe algebra that will form the basis of our work in subsequent sections.

Let us begin by recalling the functional calculus for the maximal Roe algebra, which was developed in a more general setting in \cite{GXY}. The discussion here is specialized to the cocompact setting.

Throughout this section we will work in the geometric situation in subsection \ref{subsec geom setup}. To simplify notation, in this subsection and the next we will write $M,\Gamma$ for either $M_1,\Gamma_1$ or $M_2,\Gamma_2$, and $D,E$ for either $D_1,E_1$ or $D_2,E_2$. In other words, $\Gamma$ acts freely and properly on $M$ with compact quotient, and $D$ is a $\Gamma$-equivariant operator on the bundle $E\to M$.

\hfill\vskip 0.1in
\subsection{Functional calculus on the maximal Roe algebra}
\label{subsec funct Roe}
\hfill\vskip 0.05in
\noindent We shall view the $C^*$-algebra $C^*_{\text{max}}(M)^{\Gamma}$ as a right Hilbert module over itself. The inner product and right action on $C^*_{\text{max}}(M)^{\Gamma}$ are defined naturally through multiplication: for $a,b\in C^*_{\text{max}}(M)^{\Gamma}$,
\begin{equation}
\label{eq Hilbert module structure}	
\langle a,b\rangle=a^*b,\qquad a\cdot b=ab.
\end{equation}
The algebra of compact operators on this Hilbert module can be identified with $C^*_{\text{max}}(M)^{\Gamma}$ via left multiplication. Similarly, the algebra of bounded adjointable operators can be identified with the multiplier algebra $\mathcal{M}$ of $C^*_{\text{max}}(M)^{\Gamma}$.

The operator $D$ defines an unbounded operator on this Hilbert module in the following way.
First note that $D$ acts on smooth sections of the external tensor product $E\boxtimes E^*\rightarrow M\times M$ by taking a section $s$ to the section $Ds$ defined by
\begin{equation}
\label{eq D action on kernels}
(Ds)(x,y)= D_{x} s(x,y),
\end{equation}
where $D_{x}$ denotes $D$ acting on the $x$-variable. 

Let $\mathcal{S}^\Gamma$ be the $*$-subalgebra of $\mathbb{C}[M]^\Gamma$ defined in Remark \ref{rem kernel algebra}. That is, an element of $\mathcal{S}^\Gamma$ is an operator $T_{\kappa}$ given by a smooth kernel $\kappa\in C_b^\infty(E\boxtimes E^*)$ that:
\begin{enumerate}[(i)]
\item is $\Gamma$-equivariant with respect to the diagonal $\Gamma$-action;
\item has finite propagation, meaning that there exists a constant $c_{\kappa}\geq 0$ such that if $d(x,y)>c_{\kappa}$ then $\kappa(x,y)=0$. 
\end{enumerate}

The operator $D$ acts on elements of $\mathcal{S}^\Gamma$ by acting on the corresponding smooth kernels as in \eqref{eq D action on kernels}. In this way, $D$ becomes a densely defined operator on the Hilbert module $C^*_{\text{max}}(M)^{\Gamma}$ that one verifies is symmetric with respect to the inner product in \eqref{eq Hilbert module structure}. 


\begin{theorem}[\cite{GXY} Theorem 3.1]
	\label{thm:regularity}
	There exists a real number $\mu\neq 0$ such that the operators
	$$D\pm\mu i\colon C^*_{\text{max}}(M)^{\Gamma}\rightarrow C^*_{\text{max}}(M)^{\Gamma}$$
	have dense range.
\end{theorem}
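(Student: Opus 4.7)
My plan is to represent $(D - i\mu)^{-1}$, applied to elements of the dense core $\mathcal{S}^\Gamma$, as a wave-operator integral and to show that this integral converges absolutely in the maximal norm once $|\mu|$ is chosen sufficiently large. The key inputs will be essential self-adjointness of $D$ on $L^2(E)$, finite propagation speed of the wave operator $U_t \coloneqq e^{itD}$, and the norm comparison in Remark \ref{rem max norm}.

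First, since $D$ descends from a closed manifold it is essentially self-adjoint on $L^2(E)$, so $U_t$ defines a strongly continuous unitary group, and a standard finite-propagation-speed estimate gives $\propagation(U_t) \leq c|t|$ for a constant $c$ depending only on the principal symbol of $D$. For any $a \in \mathcal{S}^\Gamma$, the composition $U_{-t}\,a$ is $\Gamma$-equivariant, locally compact, and of propagation at most $c|t| + \propagation(a)$, so it lies in $\mathbb{C}[M]^\Gamma$, and its kernel varies smoothly in $t$. Consequently the truncated Bochner integrals
\[
b_n \coloneqq i\int_0^n U_{-t}\,a\,e^{-\mu t}\,dt
\]
lie in $\mathcal{S}^\Gamma$, with propagation at most $cn + \propagation(a)$.

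Next, I would use Remark \ref{rem max norm} together with \eqref{eq equiv Roe iso} to show that the constants $C_r$ grow at most exponentially: there exist $A,B>0$ with $C_r \leq A\,e^{Br}$. The point is that under \eqref{eq equiv Roe iso} a finite-propagation operator decomposes as a sum over a finite set $F_r \subseteq \Gamma$ of elements within a bounded translation distance of a fundamental domain, and $|F_r|$ grows at most exponentially by standard volume-growth bounds for finitely generated discrete groups. Combined with unitarity of $U_{-t}$ on $L^2(E)$, this yields
\[
\|U_{-t}\,a\|_{\max} \;\leq\; A\,e^{B\,\propagation(a)}\,e^{Bc|t|}\,\|a\|_{\mathcal{B}(L^2(E))}.
\]

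Finally, I would fix any real $\mu > Bc$ and use $\partial_t U_{-t} = -iD U_{-t}$ on $\mathcal{S}^\Gamma$ together with integration by parts on $[0,n]$ to compute
\[
(D - i\mu)\,b_n = a - U_{-n}\,a\,e^{-\mu n}.
\]
The estimate above then forces the right-hand side to tend to $a$ in the maximal norm, since $\mu > Bc$, so the range of $D - i\mu$ on $\mathcal{S}^\Gamma$ is dense in $C^*_{\max}(M)^\Gamma$. An analogous argument using the integral of $U_t\,a\,e^{-\mu t}$ handles $D + i\mu$ with the same $\mu$. The main obstacle is the exponential growth bound in the second step: without at most exponential control on $C_r$, no finite $\mu$ would make the wave-operator integral converge in the maximal norm, and a genuinely different argument would be required; this exponential control is, however, intrinsic to the cocompact setting, which is exactly what makes the wave-operator integral the natural tool here.
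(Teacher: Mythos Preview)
The paper does not actually prove this theorem: it is quoted verbatim from \cite{GXY} and no argument is given here, so there is no ``paper's own proof'' to compare against. That said, your proposal is a correct and self-contained proof in the cocompact setting, and it meshes well with the wave-operator machinery the paper develops immediately afterwards in subsection~\ref{subsec wave}.

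Two remarks on the details. First, your key estimate $C_r\le A e^{Br}$ is indeed valid: under the isomorphism \eqref{eq equiv Roe iso}, an operator $T$ of propagation at most $r$ decomposes as $\sum_{\gamma\in F_r}\gamma\otimes K_\gamma$ with each $K_\gamma$ a compression of $T$ (hence $\|K_\gamma\|\le\|T\|_{\mathcal{B}(L^2)}$), and $|F_r|$ grows at most exponentially because cocompactness forces $\Gamma$ to be finitely generated (Milnor--\v{S}varc), so every finitely generated group has at most exponential volume growth. Second, the ingredients you invoke---that $e^{itD}_{L^2}\circ T_\kappa$ has smooth kernel and that $\partial_t(e^{-itD}_{L^2}\kappa)=-iD(e^{-itD}_{L^2}\kappa)$ pointwise---are exactly the content of the paragraph preceding \eqref{eq L2 multiplier} and of Lemma~\ref{lem pointwise wave}; although these appear after Theorem~\ref{thm:regularity} in the paper's layout, their proofs use only $L^2$-functional calculus and Sobolev theory, so there is no circularity in citing them.

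Your approach is in the spirit of Chernoff's finite-propagation proof of essential self-adjointness, transported to the Hilbert-module setting via the crude but sufficient exponential bound on the max norm. This is arguably more elementary than parametrix-based regularity arguments, at the cost of being specific to the cocompact case (as you correctly flag at the end).
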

Consequently, the operator $D$ on the Hilbert module $C^*_{\text{max}}(M)^{\Gamma}$ is regular and essentially self-adjoint, and so admits a continuous functional calculus (see \cite[Theorem 10.9]{Lance} and \cite[Proposition 16]{Kucerovsky}):
\begin{theorem}\label{thm:functionalcalculus}
	For $j=1$ or $2$, there is a $*$-preserving linear map
	\begin{align*}
	\pi\colon C(\mathbb{R})&\rightarrow\mathcal{R}\left(C^*_{\text{max}}(M)^{\Gamma}\right),\\
	f&\mapsto f(D)\coloneqq\pi(f),
	\end{align*}
	where $C(\mathbb{R})$ denotes the continuous functions $\mathbb{R}\to\mathbb{C}$, and $\mathcal{R}\left(C^*_{\text{max}}(M)^{\Gamma}\right)$ denotes the regular operators on $C^*_{\text{max}}(M)^{\Gamma}$, such that:	
	\begin{enumerate}
		\item[(i)] $\pi$ restricts to a $*$-homomorphism $C_b(\mathbb{R})\rightarrow\mathcal{M}$;		
		\item[(ii)] If $|f(t)|\leq|g(t)|$ for all $t\in\mathbb{R}$, then $\textnormal{dom}(g(D))\subseteq\textnormal{dom}(f(D))$;
		\item[(iii)] If $(f_n)_{n\in\mathbb{N}}$ is a sequence in $C(\mathbb{R})$ for which there exists $f'\in C(\mathbb{R})$ such that $|f_n(t)|\leq |f'(t)|$ for all $t\in\mathbb{R}$, and if $f_n$ converge to a limit function $f \in C(\R)$ uniformly on compact subsets of $\mathbb{R}$, then $f_n(D)x\to f(D)x$ for each $x\in\textnormal{dom}(f(D))$;
		\item[(iv)] $\textnormal{Id}(D)=D$.
	\end{enumerate}
\end{theorem}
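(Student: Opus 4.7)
The plan is to obtain Theorem \ref{thm:functionalcalculus} by combining Theorem \ref{thm:regularity} with the general theory of functional calculus for regular self-adjoint operators on Hilbert $C^*$-modules, as developed in \cite{Lance} and \cite{Kucerovsky}. The statement is essentially a citation-style consequence once the regularity hypothesis is verified.

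First, I would verify that the densely defined symmetric operator $D$ on the Hilbert module $C^*_{\textnormal{max}}(M)^{\Gamma}$ is regular and essentially self-adjoint. Symmetry on the dense domain $\mathcal{S}^\Gamma$ follows from a direct computation at the kernel level: for $T_\kappa, T_\lambda \in \mathcal{S}^\Gamma$, one rewrites $\langle D T_\kappa, T_\lambda \rangle - \langle T_\kappa, D T_\lambda \rangle$ as an integral over $M \times M$, and the integration-by-parts boundary terms vanish because the propagation bounds $c_\kappa, c_\lambda$ and the fact that $D$ is a differential operator force the kernels to behave compactly in the relevant variable. Combined with the density of the ranges of $D \pm \mu i$ granted by Theorem \ref{thm:regularity}, the standard criterion (\cite[Lemma 9.8]{Lance}) then implies that the closure $\overline{D}$ is regular and self-adjoint on the Hilbert module.

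With regularity established, I would invoke \cite[Theorem 10.9]{Lance} and \cite[Proposition 16]{Kucerovsky} to extract the continuous functional calculus $\pi: C(\mathbb{R}) \to \mathcal{R}(C^*_{\textnormal{max}}(M)^\Gamma)$. Conceptually, one may pass through the Cayley transform $U_D = (D - \mu i)(D + \mu i)^{-1}$, which by Theorem \ref{thm:regularity} is a well-defined unitary element of the multiplier algebra $\mathcal{M}$. For $f \in C_b(\mathbb{R})$, one sets $f(D) := (f \circ \mu_0^{-1})(U_D)$, where $\mu_0: \mathbb{R} \to S^1 \setminus \{1\}$ is the Cayley homeomorphism and the right-hand side uses the continuous functional calculus for the normal element $U_D \in \mathcal{M}$; this directly yields property (i). For general $f \in C(\mathbb{R})$, one extends using cutoff sequences $f_n := f \cdot \chi_{[-n,n]}$ together with a limiting procedure on the dense domain where convergence holds, which delivers properties (ii) and (iii); property (iv) is an immediate sanity check against the definition of $\overline{D}$.

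The only step requiring effort specific to the setting of this paper is the symmetry verification on $\mathcal{S}^\Gamma$ and the careful check that the bounded transform produces an adjointable operator on the Hilbert module; both are routine given the finite-propagation constraint. I do not expect serious obstacles, since the conceptual content of the theorem is entirely contained in the earlier regularity result: once that is in hand, everything reduces to bookkeeping between the Hilbert-module functional calculus and the classical spectral calculus for normal elements of $C^*$-algebras.
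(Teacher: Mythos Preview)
Your proposal is correct and matches the paper's approach exactly: the paper does not give a standalone proof of this theorem but simply observes that Theorem \ref{thm:regularity} forces $D$ to be regular and essentially self-adjoint on the Hilbert module, whereupon the functional calculus is quoted directly from \cite[Theorem 10.9]{Lance} and \cite[Proposition 16]{Kucerovsky}. Your additional remarks about the Cayley transform and symmetry on $\mathcal{S}^\Gamma$ are a reasonable unpacking of what those references do, but the paper itself treats the entire statement as a citation.
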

	
\hfill\vskip 0.1in
\subsection{The wave operator}
\label{subsec wave}
\hfill\vskip 0.05in
\noindent We now discuss the relationship between the wave operator formed using the functional calculus from Theorem \ref{thm:functionalcalculus} and the classical wave operator on $L^2$. Both of these operators can be viewed as bounded multipliers of the maximal Roe algebra $C^*_{\textnormal{max}}(M)^{\Gamma}$, and we will see that:
\begin{proposition}
\label{prop waves equal}
For each $t\in\mathbb{R}$, we have
$$e^{itD}_{L^2}=e^{itD}\in\mathcal{M}(C^*_{\textnormal{max}}(M)^{\Gamma}).$$
\end{proposition}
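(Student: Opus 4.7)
The plan is to show that both operators, viewed as multipliers of $C^*_{\textnormal{max}}(M)^{\Gamma}$, act identically on the dense subalgebra $\mathcal{S}^{\Gamma}$ of smooth-kernel operators. Equality will be verified by showing that both actions produce the same solution of a Cauchy problem in the Hilbert module $C^*_{\textnormal{max}}(M)^{\Gamma}$ and invoking uniqueness of that solution.

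First I would verify that $e^{itD}_{L^2}$ defines a bounded adjointable operator on the Hilbert module $C^*_{\textnormal{max}}(M)^{\Gamma}$, hence a multiplier. For any $T\in\mathbb{C}[M]^{\Gamma}$ the product $e^{itD}_{L^2}T$ is $\Gamma$-equivariant, locally compact (since $T$ is), and has propagation at most $|t|+\textnormal{prop}(T)$ by finite propagation speed of the wave evolution for the first-order elliptic operator $D$. Boundedness in the maximal norm can be obtained by an approximation argument: choose $\phi_n\in C_0(\mathbb{R})$ with compactly supported Fourier transforms approximating the character $\xi\mapsto e^{it\xi}$ on relevant spectral subsets, so that $\phi_n(D)$, well-defined as a multiplier via Theorem \ref{thm:functionalcalculus}, acts on $\mathcal{S}^{\Gamma}$ in the same way as the corresponding Fourier-synthesized approximants to $e^{itD}_{L^2}$.

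Next, fix $T_\kappa\in\mathcal{S}^{\Gamma}$. I would show that both curves $u_1(t)\coloneqq e^{itD}_{L^2}T_\kappa$ and $u_2(t)\coloneqq e^{itD}T_\kappa$ lie in $C^*_{\textnormal{max}}(M)^{\Gamma}$ and are differentiable in $t$ with respect to the maximal norm, with derivative $iDu_j(t)$. For $u_2$ this follows directly from Theorem \ref{thm:functionalcalculus}, using $T_\kappa\in\dom(D)$ (since $DT_\kappa\in\mathcal{S}^{\Gamma}$). For $u_1$, the kernel of $u_1(t)$ is smooth in $(t,x,y)$ with propagation uniformly bounded on compact intervals in $t$, so difference quotients converge in the $L^2$ operator norm and therefore in the maximal norm by Remark \ref{rem max norm}.

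Finally, apply the uniqueness argument. The difference $g(t)\coloneqq u_1(t)-u_2(t)$ satisfies $g(0)=0$ and $\dot g(t)=iDg(t)$. Using the Hilbert module inner product $\langle a,b\rangle=a^*b$ on $C^*_{\textnormal{max}}(M)^{\Gamma}$ and essential self-adjointness of $D$ (Theorem \ref{thm:regularity}),
\begin{equation*}
\frac{d}{dt}\langle g(t),g(t)\rangle=\langle iDg,g\rangle+\langle g,iDg\rangle=i\bigl(g^*Dg-(Dg)^*g\bigr)=0,
\end{equation*}
so $\|g(t)\|_{\textnormal{max}}\equiv 0$ and $u_1(t)=u_2(t)$. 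Density of $\mathcal{S}^{\Gamma}$ in $C^*_{\textnormal{max}}(M)^{\Gamma}$ extends the equality to all of $C^*_{\textnormal{max}}(M)^{\Gamma}$, yielding $e^{itD}_{L^2}=e^{itD}$ as multipliers. The principal obstacle is the first step: Remark \ref{rem max norm} controls the maximal norm by the $L^2$ norm only with a propagation-dependent constant, so identifying $e^{itD}_{L^2}$ with a genuine multiplier of the maximal Roe algebra cannot be achieved by a naive norm comparison and requires leveraging the bounded functional calculus of Theorem \ref{thm:functionalcalculus} through the Fourier-synthesis approximation above.
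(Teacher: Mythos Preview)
Your core argument---show that $t\mapsto e^{itD}_{L^2}\kappa$ and $t\mapsto e^{itD}\kappa$ both solve the abstract wave equation in $C^*_{\textnormal{max}}(M)^{\Gamma}$ for $\kappa\in\mathcal{S}^{\Gamma}$, then invoke uniqueness---is exactly the paper's approach. The paper packages the differentiability of $u_1$ into a separate lemma (Lemma~\ref{lem pointwise wave}) proving the $L^2$-operator-norm convergence of the difference quotients via a pointwise estimate and the mean value theorem, then transfers to the maximal norm by Remark~\ref{rem max norm} just as you do; for uniqueness it uses the standard $w(t)=e^{itD}v(s-t)$ trick (Lemma~\ref{lem max wave}) rather than your energy estimate, but these are interchangeable.

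Your ``principal obstacle,'' however, is not an obstacle at all, and the Fourier-synthesis workaround you sketch is both unnecessary and somewhat circular. You do not need to know in advance that $e^{itD}_{L^2}$ is a multiplier of $C^*_{\textnormal{max}}(M)^{\Gamma}$: the main body of your argument already establishes $e^{itD}_{L^2}\kappa=e^{itD}\kappa$ in $C^*_{\textnormal{max}}(M)^{\Gamma}$ for every $\kappa\in\mathcal{S}^{\Gamma}$. Since $e^{itD}$ \emph{is} a multiplier by the bounded functional calculus, this equality immediately shows that the densely defined map $\kappa\mapsto e^{itD}_{L^2}\kappa$ extends continuously to all of $C^*_{\textnormal{max}}(M)^{\Gamma}$, and the extension is $e^{itD}$. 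So the multiplier property comes out a posteriori from the very argument you give in your second and third steps; you may simply delete the first step.
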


Let us begin by explaining both sides of the equation, starting with the right-hand side. For each $t\in\mathbb{R}$, the functional calculus from Theorem \ref{thm:functionalcalculus} allows one to form a bounded adjointable operator $e^{itD}$ on the Hilbert module $C^*_{\textnormal{max}}(M)^{\Gamma}$. The resulting group of operators $\{e^{itD}\}_{t\in\mathbb{R}}$ is strongly continuous in the sense of Theorem \ref{thm:functionalcalculus} (iii) and uniquely solves the wave equation on $C^*_{\text{max}}(M)^{\Gamma}$:
\begin{lemma}
		\label{lem max wave}
		For any $\kappa\in\mathcal{S}^{\Gamma}$, $u(t)=e^{itD}\kappa$ is the unique solution of the problem
		\begin{equation}
		\label{eq max wave}
		\frac{du}{dt}=iD u,\quad u(0)=\kappa,
		\end{equation}
		with $u\colon\mathbb{R}\to C^*_{\text{max}}(M)^{\Gamma}$ a differentiable map taking values in $\textnormal{dom}(D)$.
\end{lemma}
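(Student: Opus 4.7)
The plan is to establish existence by direct computation using the functional calculus from Theorem \ref{thm:functionalcalculus}, then prove uniqueness via a standard "constant-of-variation" argument adapted to the Hilbert module setting.

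For existence, I would first check that $u(t) = e^{itD}\kappa$ actually takes values in $\textnormal{dom}(D)$. Since $\kappa \in \mathcal{S}^\Gamma$ is a smooth kernel element, $\kappa \in \textnormal{dom}(D)$, and moreover $D\kappa \in \mathcal{S}^{\Gamma} \subset C^*_{\textnormal{max}}(M)^{\Gamma}$. The functions $\lambda$ and $e^{it\lambda}$ generate commuting operators under the continuous functional calculus of a regular self-adjoint operator, so $e^{itD}\kappa \in \textnormal{dom}(D)$ and $De^{itD}\kappa = e^{itD}D\kappa$. To compute $du/dt$, I would examine the difference quotient
\[
\frac{e^{i(t+h)D} - e^{itD}}{h}\kappa = e^{itD} \cdot \frac{e^{ihD} - I}{h}\kappa
\]
and apply Theorem \ref{thm:functionalcalculus}(iii) to the family $f_h(\lambda) = (e^{ih\lambda}-1)/h$, which satisfies the uniform bound $|f_h(\lambda)| \le |\lambda|$ and converges to $i\lambda$ uniformly on compact subsets of $\mathbb{R}$. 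Since $\kappa \in \textnormal{dom}(D)$ is precisely $\textnormal{dom}(\lambda \mapsto \lambda)$, this shows $f_h(D)\kappa \to iD\kappa$. Combining with boundedness of $e^{itD}$ as a multiplier, we conclude $\tfrac{d}{dt}u(t) = iDe^{itD}\kappa = iDu(t)$, with $u(0)=\kappa$.

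For uniqueness, suppose $v\colon \mathbb{R} \to C^*_{\textnormal{max}}(M)^\Gamma$ is any differentiable solution with $v(t) \in \textnormal{dom}(D)$. I would set $w(t) = e^{-itD}v(t)$ and show $w'(t) = 0$ by splitting the difference quotient as
\[
\frac{w(t+h)-w(t)}{h} = e^{-i(t+h)D}\cdot\frac{v(t+h)-v(t)}{h} + \frac{e^{-i(t+h)D} - e^{-itD}}{h}v(t).
\]
The first term converges to $e^{-itD}(iDv(t))$ using uniform boundedness of the wave operator (strong continuity coming from Theorem \ref{thm:functionalcalculus}(iii)) together with the hypothesis on $v$. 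For the second term, the same argument as in existence, applied now to $v(t) \in \textnormal{dom}(D)$, gives convergence to $-iDe^{-itD}v(t) = e^{-itD}(-iD)v(t)$, where the commutation again comes from the continuous functional calculus. The two contributions cancel, so $w(t) = w(0) = \kappa$ and thus $v(t) = e^{itD}\kappa$.

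The main subtlety I anticipate is the justification of the product rule and commutation $De^{-itD} = e^{-itD}D$ on $\textnormal{dom}(D)$, since $D$ is an unbounded regular operator on the Hilbert module rather than a genuine bounded operator. These facts are consequences of Theorem \ref{thm:functionalcalculus}, but one should be careful to apply the dominated convergence property (iii) only to elements in the appropriate domain, which is exactly where the hypothesis $v(t) \in \textnormal{dom}(D)$ and $\kappa \in \mathcal{S}^\Gamma \subset \textnormal{dom}(D)$ is used. Beyond this point, the proof is essentially Stone's theorem transported to the maximal Roe-algebra Hilbert module.
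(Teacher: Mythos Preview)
Your proposal is correct and follows essentially the same approach as the paper: existence via the dominated-convergence property of the functional calculus (Theorem~\ref{thm:functionalcalculus}(iii)) applied to difference quotients, and uniqueness by showing an appropriate ``back-propagated'' quantity is constant. The only cosmetic differences are that the paper bounds the difference-quotient functions by $|1+s|$ rather than $|\lambda|$, and for uniqueness sets $w(t)=e^{itD}v(s-t)$ for fixed $s$ rather than your $w(t)=e^{-itD}v(t)$; both variants yield the same cancellation.
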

	\begin{proof}
		For each $t\in\mathbb{R}$, the function $s\mapsto e^{its}$ is a unitary in $C_b(\mathbb{R})$. Hence $e^{itD}$ is bounded adjointable and unitary. Let $h_n$ be a sequence of positive real numbers converging to $0$ as $n\rightarrow\infty$. Then for each $t\in\mathbb{R}$, the sequence of functions
		$$f_n(s)\coloneqq\frac{e^{i(t+h_n)s}-e^{its}}{h_n}$$ converges to $f(s)\coloneqq ise^{its}$ uniformly on compact subsets of $\mathbb{R}$ in the limit $n\to\infty$. Also, each $f_n$ is bounded above by $|1+s|$. By Theorem \ref{thm:functionalcalculus} (iii), this implies \eqref{eq max wave}.
		
		For the uniqueness claim, let $v$ be another solution of \eqref{eq max wave} with $v(0)=\kappa$. For any fixed $s\in\mathbb{R}$ and $0\leq t\leq s$, set $w(t)=e^{itD}v(s-t)$. Then we have
		$$\frac{dw}{dt}=iDe^{itD}v(s-t)-ie^{itD}Dv(s-t)=0.$$
		It follows that $w(t)$ is constant for all $t$, hence 
		\begin{align*}
		v(s)=w(0)=w(s)=e^{isD}\kappa=u(s).&\qedhere
		\end{align*}
	\end{proof}

On the other hand, we may apply the functional calculus on $L^2(E)$ to the essentially self-adjoint operator 
\begin{equation}
\label{eq classical D}
D\colon L^2(E)\to L^2(E)
\end{equation}
to form the \emph{classical} wave operator $e^{itD}_{L^2}$, not to be confused with with the bounded adjointable operator $e^{itD}$. The resulting operator group $\{e^{itD}_{L^2}\}_{t\in\mathbb{R}}$ is strongly continuous in $\mathcal{B}(L^2(E))$ and uniquely solves the wave equation on $L^2(E)$: for every $v\in C_c^\infty(E)$, we have
\begin{equation*}
\label{eq wave L2}
	\frac{d}{dt}(e^{itD}_{L^2}(v))=iD e^{itD}_{L^2}(v).
\end{equation*}
Via composition, $e^{itD}_{L^2}$ defines a map
\begin{equation}
\label{eq wave on SGamma}
e^{itD}_{L^2}\colon \mathcal{S}^{\Gamma}\to\mathcal{S}^{\Gamma}.
\end{equation}
A standard argument involving the Sobolev embedding theorem now shows that for any $T_{\kappa}\in\mathcal{S}^\Gamma$, the kernel of $e^{itD}_{L^2}\circ T_\kappa$ is smooth, in addition to having finite propagation and being $\Gamma$-equivariant. Indeed, for any $m\in\mathbb{N}$, the operator $D^m\circ T_{\kappa}$ is bounded on $L^2(E)$, as $M/\Gamma$ is compact. Together with the fact that
$$D^m\circ e^{itD}_{L^2}\circ T_\kappa=e^{itD}_{L^2}\circ D^m\circ T_\kappa,$$
Sobolev theory implies that the image of $e^{itD}_{L^2}\circ T_\kappa$ lies in the smooth sections, so that this operator also has smooth kernel. By Remark \ref{rem max norm}, \eqref{eq wave on SGamma} extends uniquely to a bounded multiplier of $C^*_{\textnormal{max}}(M)^{\Gamma}$,
\begin{equation}
\label{eq L2 multiplier}	
e^{itD}_{L^2}\colon C^*_{\textnormal{max}}(M)^{\Gamma}\to C^*_{\textnormal{max}}(M)^{\Gamma}.
\end{equation}
This is the operator on the left-hand side of Proposition \ref{prop waves equal}. 

Observe that 
if $k$ is a smooth, \emph{compactly supported} Schwartz kernel on $M\times M$, then we have the pointwise equality
\begin{equation}
\label{eq pointwise cpt}
\Big(\frac{d}{dt}e^{itD}_{L^2}k\Big)(x,y)=(iDe^{itD}_{L^2}k)(x,y),
\end{equation}
where $e^{itD}_{L^2}k$ denotes the smooth Schwartz kernel of the composition $e^{itD}_{L^2}\circ T_k$. The proof of Proposition \ref{prop waves equal}, involves a more general form of this observation for kernels in $\mathcal{S}^\Gamma$:


\begin{lemma}
\label{lem pointwise wave}
Let $\kappa\in\mathcal{S}^\Gamma$, and let $e^{itD}_{L^2}\kappa$ denote the smooth Schwartz kernel of $e^{itD}_{L^2}\circ T_\kappa$. Then for every $t\in\mathbb{R}$ and $x,y\in M$, we have the pointwise equality
\begin{equation}
\label{eq pointwise cocpt}
\Big(\frac{d}{dt}e^{itD}_{L^2}\kappa\Big)(x,y)=(iDe^{itD}_{L^2}\kappa)(x,y).
\end{equation}
Moreover, the path $t\mapsto e^{itD}_{L^2}\kappa$ is continuous with respect to the operator norm, and
\begin{equation}
\label{eq wave kernel}
\lim_{h\to 0}\,\,\,\bigg\|\frac{e^{i(t+h)D}_{L^2}\circ T_\kappa-e^{itD}_{L^2}\circ T_\kappa}{h}-iD e^{itD}_{L^2}\circ T_\kappa\bigg\|_{\mathcal{B}(L^2(E))}= 0.
\end{equation}
\end{lemma}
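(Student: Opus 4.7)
The lemma has two essentially independent parts: the pointwise wave equation \eqref{eq pointwise cocpt} for the smooth kernel $e^{itD}_{L^2}\kappa$, and the operator-norm statements (continuity in $t$ together with the norm differentiability \eqref{eq wave kernel}). I will handle these separately. The pointwise equation will be reduced to the compactly supported version \eqref{eq pointwise cpt} by a cutoff in the $y$-variable, and the operator norm statements will follow from applying the $L^2$-functional calculus of the self-adjoint closure $\bar D$ to elementary scalar estimates on $s \mapsto e^{ihs}$.

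For \eqref{eq pointwise cocpt}, I fix an arbitrary $y_0 \in M$ and choose $\chi \in C_c^\infty(M)$ with $\chi \equiv 1$ near $y_0$. Since $M$ is complete (being a Galois cover of the compact manifold $N$) and $\kappa$ has finite propagation, the modified kernel $\widetilde\kappa(x,y) \coloneqq \chi(y)\kappa(x,y)$ is smooth and compactly supported on $M \times M$, hence lies in $C_c^\infty(E \boxtimes E^*)$. Since both $e^{itD}_{L^2}$ and $D$ act only in the $x$-variable, $(e^{itD}_{L^2}\widetilde\kappa)(x,y) = \chi(y)(e^{itD}_{L^2}\kappa)(x,y)$ and similarly $(D\, e^{itD}_{L^2}\widetilde\kappa)(x,y) = \chi(y)(D\, e^{itD}_{L^2}\kappa)(x,y)$. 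Applying the identity \eqref{eq pointwise cpt} to $\widetilde\kappa$ and evaluating at $y = y_0$ yields \eqref{eq pointwise cocpt} at $y_0$, and the arbitrariness of $y_0$ gives the result.

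For the operator-norm statements, the key observation is that both $DT_\kappa$ and $D^2 T_\kappa$ are bounded on $L^2(E)$: each has a smooth, $\Gamma$-equivariant kernel of propagation at most $c_\kappa$ (since $D$ is a local first-order differential operator), and therefore belongs to $\mathcal{S}^\Gamma \subseteq \mathbb{C}[M]^\Gamma \subseteq \mathcal{B}(L^2(E))$. Setting $f_h(s) \coloneqq (e^{ihs}-1)/h - is$ and using that $e^{itD}_{L^2}$ commutes with $D$ via the functional calculus,
\[
\frac{e^{i(t+h)D}_{L^2} T_\kappa - e^{itD}_{L^2} T_\kappa}{h} - iD\, e^{itD}_{L^2} T_\kappa = e^{itD}_{L^2}\circ f_h(D)\circ T_\kappa.
\]
Unitarity of $e^{itD}_{L^2}$ reduces \eqref{eq wave kernel} to showing $\|f_h(D)\circ T_\kappa\|_{\mathcal{B}(L^2(E))} \to 0$. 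The elementary bound $|e^{i\theta} - 1 - i\theta| \leq \theta^2/2$ gives $|f_h(s)| \leq |h|\,s^2/2$, and the spectral theorem then yields $\|f_h(D)\xi\|_{L^2} \leq (|h|/2)\|\bar D^2 \xi\|_{L^2}$ for $\xi \in \dom(\bar D^2)$. Applying this with $\xi = T_\kappa \zeta$ and using boundedness of $D^2 T_\kappa$ gives the desired convergence uniformly in $\zeta$ of unit norm. Norm continuity of $t \mapsto e^{itD}_{L^2}\circ T_\kappa$ follows in the same fashion from the weaker bound $|e^{ihs} - 1| \leq |hs|$ combined with boundedness of $DT_\kappa$.

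The main technical step is the claim that $T_\kappa$ maps $L^2(E)$ into $\dom(\bar D^2)$, so that the spectral calculus manipulations above are legitimate. Since $D^2 T_\kappa$ is bounded on $L^2(E)$, for any $\xi \in L^2(E)$ and any sequence $\xi_n \in C_c^\infty(E)$ with $\xi_n \to \xi$, we have $T_\kappa \xi_n \to T_\kappa \xi$ and $\bar D^2 T_\kappa \xi_n = D^2 T_\kappa \xi_n \to D^2 T_\kappa \xi$ in $L^2(E)$. Closedness of $\bar D^2$ then forces $T_\kappa \xi \in \dom(\bar D^2)$ with $\bar D^2 T_\kappa \xi = D^2 T_\kappa \xi$. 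So the whole argument rests on the smoothness, $\Gamma$-equivariance, and finite propagation of $\kappa$, which pass to $D_x^k \kappa$ and guarantee the required boundedness on $L^2(E)$.
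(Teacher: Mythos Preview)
Your proof is correct, and both parts take a genuinely different route from the paper's argument.

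For the pointwise equation, the paper cuts off in the \emph{first} variable: it multiplies $\kappa$ by a function $\phi(x)$ equal to $1$ on $B_{2t}(x_0)$, and then must invoke finite propagation of $e^{itD}_{L^2}$ to argue that $(e^{itD}_{L^2}\phi\kappa)(w,y)=(e^{itD}_{L^2}\kappa)(w,y)$ for $w$ near $x_0$. Your cutoff in the second variable is cleaner: since the wave operator and $D$ act only in $x$, multiplication by $\chi(y)$ commutes with them automatically, and the reduction to the compactly supported case \eqref{eq pointwise cpt} is immediate. The paper's approach has the minor advantage of not needing completeness of $M$ to conclude compact support of the modified kernel, but in the cocompact setting this is free.

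For the operator-norm statements, the paper works pointwise: it uses the already-established pointwise wave equation together with the mean value theorem to bound $|e^{itD}_{L^2}\kappa(x,y)-\kappa(x,y)|$ by $|t|\sup_s |iDe^{isD}_{L^2}\kappa(x,y)|$, then exploits $\Gamma$-equivariance and cocompactness to make this uniform in $x,y$, and finally passes from a sup-norm kernel bound to an operator-norm bound. Your argument bypasses the kernel entirely, using only that $D T_\kappa$ and $D^2 T_\kappa$ lie in $\mathcal{S}^\Gamma$ (hence are bounded) and the elementary scalar inequalities $|e^{ihs}-1|\le |hs|$, $|e^{ihs}-1-ihs|\le h^2s^2/2$ via the spectral theorem. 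This is shorter and decouples the two halves of the lemma; the paper's method, on the other hand, generalizes more transparently to the bounded-geometry non-cocompact setting treated in Section~\ref{sec generalizations}, where the pointwise kernel estimates are reused.
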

\begin{proof}
	Fix $t\in\mathbb{R}$, $\epsilon>0$, and $x$, $y\in M$. Take $\phi\in C_c^\infty(M)$ such that $\phi(z)=1$ if $z\in B_{2t}(x)$, and $\phi(z)=0$ if $z\in M\backslash B_{3t}(x)$. Since $\kappa$ has finite propagation, the smooth kernel $\phi\kappa$ is compactly supported in $M\times M$, so by \eqref{eq pointwise cpt} we have
\begin{equation}
\label{eq phi kappa}
	\Big(\frac{d}{dt}e^{itD}_{L^2}\phi\kappa\Big)(x,y)=(iDe^{itD}_{L^2}\phi\kappa)(x,y).
\end{equation}
	The fact that $e^{itD}_{L^2}$ has propagation at most $t$, together with the fact that 
	$$(1-\phi)\kappa(z,y)=0$$ 
	for all $z\in B_{2t}(x)$, implies that for all $w\in B_t(x)$ we have $(e^{itD}_{L^2}(1-\phi)\kappa)(w,y)=0$. It follows that 
	\begin{equation}
	(e^{itD}_{L^2}\kappa)(w,y) = (e^{itD}_{L^2}\phi\kappa)(w,y).
\end{equation}
Taking $w=x$ and combining with \eqref{eq phi kappa} gives \eqref{eq pointwise cocpt}.

To establish norm continuity, it suffices to show that $t\mapsto e^{itD}_{L^2}\kappa$ is norm-continuous at $t=0$. For each $x$, $y$, and $t$ in an interval $[-t_0,t_0]$, we have
\begin{equation}
\label{eq mean value}
\Big|e^{itD}_{L^2}\kappa(x,y)-\kappa(x,y)\Big|\leq |t|\cdot\sup_{s\in[-t_0,t_0]}\Big|iDe^{isD}_{L^2}\kappa(x,y)\Big|
\end{equation}
by the mean value theorem applied to \eqref{eq pointwise cocpt}. Since the operator $iDe^{isD}_{L^2}\kappa$ is $\Gamma$-equivariant with finite propagation, and $M$ is cocompact, there exists a compact subset $K\subseteq M$ such that
\begin{equation}
\label{eq C t0}
\sup_{\substack{x,y\in M,\\s\in[-t_0,t_0]}}\Big|iDe^{isD}_{L^2}\kappa(x,y)\Big|=\sup_{\substack{x,y\in K,\\s\in[-t_0,t_0]}}\Big|iDe^{isD}_{L^2}\kappa(x,y)\Big|\leq C_{t_0}
\end{equation}
for some constant $C_{t_0}$. Now since $e^{itD}_{L^2}\kappa-\kappa$ has finite propagation, its operator norm can be estimated using and \eqref{eq mean value} and \eqref{eq C t0}:
$$\norm{e^{itD}_{L^2}\kappa-\kappa}_{\mathcal{B}(L^2(E))}\leq C\cdot\sup_{x,y\in M}\Big|e^{itD}_{L^2}\kappa(x,y)-\kappa(x,y)\Big|\leq CC_{t_0}|t|,$$
for some constant $C$ depending on the propagation of $\kappa$. We obtain norm continuity by taking a limit $t\to 0$. The proof of \eqref{eq wave kernel} is a straightforward adaptation of this argument.
\end{proof}
With these preparations, we now prove Proposition \ref{prop waves equal}.

%


\begin{proof}[Proof of Proposition \ref{prop waves equal}]
Fix $t\in\mathbb{R}$. Let $T_\kappa\in\mathcal{S}^{\Gamma}$ with smooth Schwartz kernel $\kappa$. 
	We claim that the kernel $e^{itD}_{L^2}\kappa$ satisfies the wave equation in $C^*_{\textnormal{max}}(M)^{\Gamma}$. To see this, first note that by equation \eqref{eq wave kernel} we have  
$$\lim_{h\to 0}\,\,\,\bigg\|\frac{e^{i(t+h)D}_{L^2}\circ T_\kappa-e^{itD}_{L^2}\circ T_\kappa}{h}-iD e^{itD}_{L^2}\circ T_\kappa\bigg\|_{\mathcal{B}(L^2(E))}= 0.$$
Now since the kernels $e^{i(t+h)D}_{L^2}\kappa$ and $D e^{itD}_{L^2}\kappa$ each have propagation at most 
$$r\coloneqq\textnormal{prop}(\kappa)+|t+h|,$$
by Remark \ref{rem max norm} there exists a constant $C_r$ such that the norm of
$$\frac{e^{i(t+h)D}_{L^2}\kappa-e^{itD}_{L^2}\kappa}{h}-iD e^{itD}_{L^2}(\kappa)$$
in $C^*_{\textnormal{max}}(M)^{\Gamma}$ is bounded above by $C_r$ times its norm in $\mathcal{B}(L^2(E))$. Thus
$$\lim_{h\to 0}\,\,\,\bigg\|\frac{e^{i(t+h)D}_{L^2}\kappa-e^{itD}_{L^2}\kappa}{h}-iD e^{itD}_{L^2}(\kappa)\bigg\|_{\textnormal{max}}= 0,$$
so the operator group $\{e^{itD}_{L^2}\}_{t\in\mathbb{R}}$ solves the wave equation \eqref{eq max wave} in $C^*_{\textnormal{max}}(M)^{\Gamma}$. It follows from the uniqueness property in Lemma \ref{lem max wave} that $e^{itD}_{L^2}$ and $e^{itD}$ coincide on $\mathcal{S}^{\Gamma}$, and hence are equal as elements of $\mathcal{M}$.
\end{proof}

\hfill\vskip 0.2in
\section{Functoriality for the higher index}
\label{sec higher ind}
\noindent
In this section, we discuss functoriality in the case of the maximal higher index from the point of view of folding maps and wave operators. This will prepare us for the proof of our main result, Theorem \ref{thm funct higher rho}, in the next section.

%

Suppose $M_1$ and $M_2$ are two Galois covers of $N$ with deck transformation groups $\Gamma_1$ and $\Gamma_2\cong \Gamma_1/H$, for some normal subgroup $H$ of $\Gamma_1$. Then the quotient homomorphism $\Gamma_1\rightarrow\Gamma_2$ induces a natural surjective $*$-homomorphism between group algebras,
$$\alpha\colon \mathbb{C}\Gamma_1\rightarrow\mathbb{C}\Gamma_2,\quad\sum_{i=1}^k c_{\gamma_i}\gamma_i\mapsto\sum_{i=1}^k c_{\gamma_i}[\gamma_i],$$
where $[\gamma]$ is the class of $\gamma\in\Gamma_1$ in $\Gamma_1/H$.
In this setting, we have the following functoriality result for the maximal higher index, which follows from a theorem of Valette \cite[Theorem 1.1]{MislinValette}:
\begin{theorem}
	\label{thm funct higher ind}
	Let $N$ be a closed Riemannian manifold. Let $D_N$ be a first-order, self-adjoint elliptic differential operator acting on a bundle $E_N\rightarrow N$. Let $M_1$ and $M_2$ be Galois covers of $N$ with deck transformation groups $\Gamma_1$ and $\Gamma_2\cong\Gamma_1/H$ respectively, for a normal subgroup $H$ of $\Gamma_1$. Let $D_1$ and $D_2$ be the lifts of $D_N$ to $M_1$ and $M_2$. Then the map on $K$-theory induced by $\alpha$ relates the maximal higher indices of $D_1$ and $D_2$:
	$$\alpha_*(\Ind_{\Gamma_1,\textnormal{max}}D_1)=\Ind_{\Gamma_2,\textnormal{max}}D_2\in K_\bullet\big(C^*_{\textnormal{max}}(\Gamma_2)\big).$$
\end{theorem}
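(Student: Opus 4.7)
The plan is to realize the maximal higher index through the functional calculus of $D_j$ and show that the folding map $\Psi \colon C^*_{\textnormal{max}}(M_1)^{\Gamma_1} \to C^*_{\textnormal{max}}(M_2)^{\Gamma_2}$ from section \ref{sec folding} intertwines this functional calculus. Under the isomorphisms $C^*_{\textnormal{max}}(M_j)^{\Gamma_j} \cong C^*_{\textnormal{max}}(\Gamma_j) \otimes \mathcal{K}$ of Remark \ref{rem max norm}, coming from the unitary identifications \eqref{eq Phi}--\eqref{eq equiv Roe iso}, direct inspection shows that $\Psi_*$ corresponds to $\alpha_* \otimes \textnormal{id}$ on $K$-theory. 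It therefore suffices to prove
\[
\Psi_*\bigl(\Ind_{\Gamma_1,\textnormal{max}} D_1\bigr) = \Ind_{\Gamma_2,\textnormal{max}} D_2 \in K_\bullet\bigl(C^*_{\textnormal{max}}(M_2)^{\Gamma_2}\bigr).
\]

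The key technical step is the identity $\Psi(e^{itD_1}) = e^{itD_2}$ in $\mathcal{M}(C^*_{\textnormal{max}}(M_2)^{\Gamma_2})$ for every $t \in \mathbb{R}$. By Proposition \ref{prop waves equal}, both sides agree with the classical $L^2$ wave operators, which have propagation at most $|t|$ by unit propagation speed for first-order symmetric hyperbolic systems. For any smooth kernel $\kappa \in \mathcal{S}^{\Gamma_1}$, the composition $e^{itD_1} \circ T_\kappa$ again lies in $\mathcal{S}^{\Gamma_1}$, and Proposition \ref{prop prop} ensures $\Psi(e^{itD_1}T_\kappa)$ has finite propagation. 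Since $D_1$ and $D_2$ are identical under any trivialization coming from an evenly-covered ball on $N$, the pointwise formula \eqref{eq pointwise folding} shows that $t \mapsto \Psi(e^{itD_1}\kappa)$ satisfies the wave equation in $C^*_{\textnormal{max}}(M_2)^{\Gamma_2}$ with initial value $\Psi(\kappa)$. The uniqueness in Lemma \ref{lem max wave} then gives $\Psi(e^{itD_1}\kappa) = e^{itD_2}\Psi(\kappa)$, and continuity together with density of $\mathcal{S}^{\Gamma_1}$-kernels extends this to the desired equality of multipliers.

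With wave-operator commutation in hand, commutation with a normalizing function follows by Fourier inversion: for any Schwartz function $\chi$,
\[
\chi(D_j) = \frac{1}{\sqrt{2\pi}}\int_{\mathbb{R}} \widehat{\chi}(t)\, e^{itD_j}\, dt,
\]
with the integral converging in the strict topology on $\mathcal{M}(C^*_{\textnormal{max}}(M_j)^{\Gamma_j})$. A standard approximation by Schwartz functions yields the analogous commutation for a normalizing chopping function $\chi$ implementing the higher index, so applying $\Psi_*$ to the $K$-theory class constructed from $\chi(D_1)$ (a projection in the even case, a unitary in the odd case) produces the class constructed from $\chi(D_2)$, proving the theorem.

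The main obstacle is handling the functional calculus at the level of the Hilbert-module regular operator $D$, since $\Psi$ is only defined initially on finite-propagation operators. The route via Proposition \ref{prop waves equal} and the uniqueness of the wave equation in Lemma \ref{lem max wave} sidesteps this by reducing all computations to $\mathcal{S}^{\Gamma_j}$, where locality of $D$ and the compatibility of the two lifts are transparent. A secondary care point is that the Fourier-integral representation of $\chi(D_j)$ and the subsequent chopping-function approximation must be controlled in a topology fine enough to recover the index class; this is handled by the continuous functional calculus of Theorem \ref{thm:functionalcalculus} together with the finite-propagation norm bound of Remark \ref{rem max norm}.
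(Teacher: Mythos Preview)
Your proposal is correct and follows essentially the same architecture as the paper: reduce to the folding map $\Psi$, establish $\Psi(e^{itD_1})=e^{itD_2}$, pass to $\Psi(f(D_1))=f(D_2)$ for $f\in C_0(\mathbb{R})$ by Fourier inversion, and conclude since the index representative is a matrix of such functions of $D_j$ (modulo the identity). The one notable difference is your argument for the wave-operator identity. The paper (Proposition~\ref{prop wave}) works directly on $L^2$: it factors $e^{itD_1}=(e^{i(t/n)D_1})^n$ with $t/n$ small enough that everything happens in a single evenly-covered chart, checks $\Psi(e^{i(t/n)D_1})=e^{i(t/n)D_2}$ there by locality of the $L^2$ wave equation, and then uses that $\Psi$ is multiplicative. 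Your route instead pushes the wave equation through $\Psi$ at the level of kernels and invokes the Hilbert-module uniqueness of Lemma~\ref{lem max wave}; this requires the intertwining $\Psi(D_1\kappa)=D_2\Psi(\kappa)$ on $\mathcal{S}^{\Gamma_1}$, which is true (and immediate from \eqref{eq Psi k} or \eqref{eq pointwise folding} since $D_1$ lifts $D_2$) but which you only sketch. Either argument works; the paper's small-time-slice trick is slightly more self-contained, while yours makes the role of Lemma~\ref{lem max wave} more visible.
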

This result was proved originally using $KK$-theory. To prepare for the proof of Theorem \ref{thm funct higher rho}, we now give a proof of Theorem \ref{thm funct higher ind} using local properties of the wave operator and the folding map $\Psi$ from the previous sections.


\subsection{Higher index}
\label{subsec higher ind}
\hfill\vskip 0.05in
	\noindent We begin by recalling the definition of the maximal higher index of an equivariant elliptic operator. 
Let $\Gamma$, $M$, and $D$ be as in section \ref{sec funct calc}. 

Let $\mathcal{Q}\coloneqq\mathcal{M}/C^*_\textnormal{max}(M)^{\Gamma}.$ Consider the short exact sequence of $C^*$-algebras
	$$0\rightarrow C^*_\textnormal{max}(M)^{\Gamma}\rightarrow\mathcal{M}\rightarrow\mathcal{Q}\rightarrow 0,$$
	where $\mathcal{M}$ is shorthand for the multiplier algebra $\mathcal{M}\big(C^*_\textnormal{max}(M)^{\Gamma}\big)$. This induces the following six-term exact sequence in $K$-theory:
	\[
	\begin{tikzcd}
	K_0(C^*_\textnormal{max}(M)^{\Gamma}) \ar{r} & K_0(\mathcal{M}) \ar{r} & K_0(\mathcal{Q}) \ar{d}{\partial_1} \\
	K_1(\mathcal{Q}) \ar{u}{\partial_0} & K_1(\mathcal{M}) \ar{l} & K_1(C^*_\textnormal{max}(M)^{\Gamma}) \ar{l},
	\end{tikzcd}
	\]
	where the connecting maps $\partial_0$ and $\partial_1$, known as \textit{index maps}, are defined as follows.
	\begin{definition}
		\label{def connectingmaps}
		\hfill
		\begin{enumerate}[(i)]
			\item $\partial_0$: let $u$ be an invertible matrix over $\mathcal{Q}$ representing a class in $K_1(\mathcal{Q})$. Let $v$ be the inverse of $u$. Let $U$ and $V$ be lifts of $u$ and $v$ to a matrix algebra over $\mathcal{M}$.
			Then the matrix
			\[W=
			\begin{pmatrix} 
			1 & 0\\ 
			U & 1
			\end{pmatrix}
			\begin{pmatrix} 
			1 & -V\\ 
			0 & 1
			\end{pmatrix}
			\begin{pmatrix} 
			1 & 0\\ 
			U & 1
			\end{pmatrix}
			\]
			is invertible, and
			$
			P=W\begin{pmatrix}
			1 & 0\\ 
			0 & 0
			\end{pmatrix}W^{-1}
			$	    
			is an idempotent. 
			We define
			\begin{equation}
			\label{eq even index}
			\partial_0[u]\coloneqq 
			\left[P
			\right]-
			\begin{bmatrix}
			0 & 0\\
			0 & 1
			\end{bmatrix}
			\in K_0\big(C^*_\textnormal{max}(M)^{\Gamma}\big).
			\end{equation}

			\item $\partial_1$: let $q$ be an idempotent matrix over $\mathcal{Q}$ representing a class in $K_0(\mathcal{Q})$. Let $Q$ be a lift of $q$ to a matrix algebra over $\mathcal{M}$. Then $e^{2\pi iQ}$ is a unitary in the unitized algebra, and we define
			\begin{equation}
			\label{eq odd index}
			\partial_1[q]\coloneqq\left[e^{2\pi iQ}\right]\in K_1(C^*_\textnormal{max}(M)^{\Gamma}).
			\end{equation}
		\end{enumerate}
	\end{definition}
	
	This construction is applied to the operator $D$ via the functional calculus from Theorem \ref{thm:functionalcalculus}, as follows. Let $\chi\colon\mathbb{R}\rightarrow\mathbb{R}$ be a continuous, odd function such that 
	$$\lim_{x\rightarrow +\infty}\chi(x)=1,$$ 
	known as a \emph{normalizing function}. Using Theorem \ref{thm:functionalcalculus}, we obtain an element $\chi(D)$ in $\mathcal{M}$. We now have:
	\begin{lemma}
		\label{lem chiD invertible}
	The class of $\chi(D)$ in $\mathcal{M}/C^*_\textnormal{max}(M)^{\Gamma}$ is invertible and independent of the choice of normalizing function $\chi$.
	\end{lemma}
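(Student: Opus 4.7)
The plan is to reduce both claims to the single statement that $f(D) \in C^*_{\textnormal{max}}(M)^{\Gamma}$ for every $f \in C_0(\mathbb{R})$. Granted this, invertibility follows because a normalizing function satisfies $\chi^2 - 1 \in C_0(\mathbb{R})$ (as $\chi$ is odd with $\chi(\pm\infty) = \pm 1$), so by the functional calculus $\chi(D)^2 - 1 = (\chi^2 - 1)(D) \in C^*_{\textnormal{max}}(M)^{\Gamma}$; thus the class of $\chi(D)$ in $\mathcal{Q} = \mathcal{M}/C^*_{\textnormal{max}}(M)^{\Gamma}$ squares to $1$, hence is invertible. Independence is equally direct: for two normalizing functions $\chi_1, \chi_2$, the difference $\chi_1 - \chi_2$ lies in $C_0(\mathbb{R})$, so $\chi_1(D) - \chi_2(D) \in C^*_{\textnormal{max}}(M)^{\Gamma}$ and the two classes in $\mathcal{Q}$ coincide.

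To prove the key statement, I would first treat $f \in C_0(\mathbb{R})$ whose Fourier transform $\hat{f}$ is compactly supported, say in $[-R, R]$. Fourier inversion (interpreted via Theorem \ref{thm:functionalcalculus}(iii) applied to the partial sums approximating the integrand uniformly on compacta) gives formally
\begin{equation*}
f(D) = \frac{1}{\sqrt{2\pi}}\int_{-R}^{R} \hat{f}(t)\, e^{itD}\, dt.
\end{equation*}
By Proposition \ref{prop waves equal}, $e^{itD}$ as an element of $\mathcal{M}$ agrees with the classical wave operator $e^{itD}_{L^2}$, which has propagation at most $|t|$. Hence the integrand has propagation uniformly bounded by $R$, and Remark \ref{rem max norm} ensures that its maximal norm is controlled by its $L^2$ operator norm. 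Combined with norm-continuity of $t \mapsto e^{itD}_{L^2}$ on finite-propagation kernels (a consequence of Lemma \ref{lem pointwise wave}), the integral converges in $\|\cdot\|_{\textnormal{max}}$ and yields an element $f(D)$ of propagation at most $R$. Local compactness of $f(D)$ then follows from ellipticity of $D$ together with the fact that $\phi(D+i)^{-1}$ is compact for $\phi \in C_c(M)$ (by Rellich's theorem applied on the cocompact quotient $M/\Gamma$). Therefore $f(D) \in \mathbb{C}[M]^{\Gamma} \subseteq C^*_{\textnormal{max}}(M)^{\Gamma}$.

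For general $f \in C_0(\mathbb{R})$, I would approximate in sup-norm by functions $f_n$ with $\hat{f}_n$ compactly supported, for instance taking $f_n = f \ast \phi_n$ for a band-limited approximate identity $\phi_n$. Since the functional calculus restricts to a contractive $*$-homomorphism $C_b(\mathbb{R}) \to \mathcal{M}$ (Theorem \ref{thm:functionalcalculus}(i)), we have $f_n(D) \to f(D)$ in $\mathcal{M}$. Because $C^*_{\textnormal{max}}(M)^{\Gamma}$ is closed in $\mathcal{M}$ and each $f_n(D)$ lies in it by the previous step, so does $f(D)$.

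The main technical hurdle is justifying the Fourier-inversion step in the maximal setting, specifically the convergence of the operator-valued integral in $\|\cdot\|_{\textnormal{max}}$. This is exactly where Proposition \ref{prop waves equal} is needed: at the $L^2$ level the identification $e^{itD}_{L^2} \mapsto f(D)_{L^2}$ and its propagation estimate are classical, but to transfer the argument to $\mathcal{M}(C^*_{\textnormal{max}}(M)^{\Gamma})$ one must know that the two wave operators coincide as multipliers, and then invoke Remark \ref{rem max norm} to control the max norm via the $L^2$ norm on the finite-propagation integrand.
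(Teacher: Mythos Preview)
Your proof is correct and follows the same overall strategy as the paper: reduce both claims to the assertion that $f(D)\in C^*_{\textnormal{max}}(M)^{\Gamma}$ for all $f\in C_0(\mathbb{R})$, establish this first for functions with compactly supported Fourier transform, and then pass to the limit using the contractivity of the functional calculus $C_b(\mathbb{R})\to\mathcal{M}$.

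The difference lies only in how the compactly-supported-Fourier-transform step is handled. The paper dispatches it in one line by citing a standard smooth-kernel result (Roe, Proposition 2.10): for $f\in\mathcal{S}(\mathbb{R})$ with $\widehat{f}$ compactly supported, $f(D)$ is given by a smooth, finite-propagation kernel, hence lies in $\mathcal{S}^{\Gamma}\subset C^*_{\textnormal{max}}(M)^{\Gamma}$. You instead reconstruct this fact using the paper's own wave-operator machinery (Proposition~\ref{prop waves equal}, Lemma~\ref{lem pointwise wave}, Remark~\ref{rem max norm}) together with a separate Rellich-type argument for local compactness. Your route is more self-contained relative to the paper's internal development and makes explicit the identification of the maximal and $L^2$ wave operators that is implicitly needed; the paper's route is much shorter because it outsources the analytic content to the cited reference. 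Neither argument is more general than the other in this cocompact setting.
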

	\begin{proof}
				Let $\mathcal{S}(\mathbb{R})$ denote the Schwartz space of functions $\mathbb{R}\rightarrow\mathbb{C}$. Then for every $f\in\mathcal{S}(\mathbb{R})$ with compactly supported Fourier transform $\widehat{f}$, the operator $f(D)$ is given by a smooth kernel \cite[Proposition 2.10]{Roe}. Since every $f\in C_0(\mathbb{R})$ function is a uniform limit of such functions, the first part of Theorem \ref{thm:functionalcalculus} implies that for every such $f$ we have $f(D)\in C^*_\textnormal{max}(M)^{\Gamma}$.
				
				Now if $\chi$ is a normalizing function, then $\chi^{2}-1\in C_0(\mathbb{R})$. Hence the class of $\chi(D)$ in $\mathcal{M}/C^*_\textnormal{max}(M)^{\Gamma}$ is invertible. Since any two normalizing functions differ by an element of $C_0(\mathbb{R})$, this class is independent of the choice of $\chi$.
	\end{proof}
	Using this lemma, one computes that $$\frac{\chi(D)+1}{2}$$ is an idempotent modulo $C^*_\textnormal{max}(M)^{\Gamma}$ and so defines element of $K_0\big(\mathcal{M}/C^*_\textnormal{max}(M)^{\Gamma}\big)$. This leads us to the definition of the maximal higher index of $D$:
		\begin{definition}
			For $i=1,2$, let $\partial_i$ be the connecting maps from Definition \ref{def connectingmaps}. The \emph{maximal higher index} of $D$ is the element
			\begin{empheq}[left={\Ind_{\Gamma,\textnormal{max}}D\coloneqq
\empheqlbrace}]{alignat*=2}
    \partial_{1}\left[\chi(D)\right]\in K_{0}\big(C^*_\textnormal{max}(M)^{\Gamma}\big),\quad&\textnormal{ if $\dim M$ is even},\\[1.5ex]
    \partial_{0}\left[\tfrac{\chi(D)+1}{2}\right]\in K_{1}\big(C^*_\textnormal{max}(M)^{\Gamma}\big),\quad&\textnormal{ if $\dim M$ is odd}.
\end{empheq}
\end{definition}

\hfill\vskip 0.1in
\subsection{Functoriality}
\hfill\vskip 0.05in
\noindent In this subsection, we return to the geometric setup described in subsection \ref{subsec geom setup} and give a new proof of Theorem \ref{thm funct higher ind}. 

A key idea is to use the local nature of the wave operator to prove:
\begin{proposition}
	\label{prop wave}
	For all $t\in\mathbb{R}$, we have $\Psi(e^{itD_1})=e^{itD_2}$.
\end{proposition}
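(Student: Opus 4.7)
The plan is to reduce the identity to a statement about classical $L^2$ wave operators via Proposition~\ref{prop waves equal}, then verify that $\Psi(e^{itD_1}_{L^2})$ satisfies the wave equation on $M_2$ and finish by uniqueness.

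First I would invoke Proposition~\ref{prop waves equal} on both $M_1$ and $M_2$: each multiplier $e^{itD_j}$ agrees with the classical wave operator $e^{itD_j}_{L^2}$. By finite propagation speed for Dirac-type operators, $e^{itD_j}_{L^2}$ has propagation at most $|t|$, hence lies in $\mathcal{B}_{\textnormal{fp}}(L^2(E_j))^{\Gamma_j}$, on which the folding map of Proposition~\ref{prop folding} applies directly and agrees with its multiplier extension. So it suffices to show the $L^2$ identity $\Psi(e^{itD_1}_{L^2}) = e^{itD_2}_{L^2}$ as bounded operators on $L^2(E_2)$.

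Next, fix a test section $u \in C_c^\infty(E_2)$ and set $A(t)u := \Psi(e^{itD_1}_{L^2}) u$; density of $C_c^\infty(E_2)$ in $L^2(E_2)$ will extend the identity to arbitrary sections. By the pointwise formula \eqref{eq pointwise folding}, at any $x \in M_2$ with lift $y_0 \in \pi^{-1}(x)$, $(A(t) u)(x) = \pi_* \sum_{j \in I,\, g \in \Gamma_1}(e^{itD_1}_{L^2}(\phi_j^g u))(y_0)$, where the sum is pointwise finite by finite propagation speed and each $\phi_j^g u$ lies in $C_c^\infty(E_1)$. At $t = 0$, since $\sum_{j,g}\phi_j^g \equiv 1$ on $M_1$ and $\pi_* \circ \pi|^* = \mathrm{id}$, one gets $A(0) u = u$. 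Differentiating termwise and applying the classical wave equation $\frac{d}{dt} e^{itD_1}_{L^2}(\phi_j^g u) = iD_1 e^{itD_1}_{L^2}(\phi_j^g u)$ (valid for smooth compactly supported data, and compatible with the pointwise analysis in Lemma~\ref{lem pointwise wave}) yields $\tfrac{d}{dt}(A(t) u)(x) = i \pi_*(D_1 W)(y_0)$, where $W := \sum_{j, g} e^{itD_1}_{L^2}(\phi_j^g u)$.

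The key step is then to recognize that $W$ is $H$-invariant on $M_1$. Using the identity $h \cdot (\phi_j^g u) = \phi_j^{hg} u$ (a consequence of Convention~\ref{conv convention} together with the $\Gamma_2$-equivariance of the partition of unity downstairs) and the $\Gamma_1$-equivariance of $e^{itD_1}_{L^2}$, one checks for each $h_0 \in H$ that the pointwise sum defining $(h_0 \cdot W)(y)$ is a reindexing of the one defining $W(y)$; hence $W$ descends to a section on $M_2$ equal to $\pi_* W = A(t)u$. Since $\pi$ is a local isometry intertwining $D_1$ and $D_2$, we get $\pi_*(D_1 W) = D_2 \pi_* W$, so $A(t) u$ solves the wave equation $\tfrac{d}{dt} A(t) u = iD_2 A(t) u$ with $A(0) u = u$. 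By uniqueness of the classical wave evolution on $L^2(E_2)$, $A(t) u = e^{itD_2}_{L^2} u$, which completes the proof after appealing to Proposition~\ref{prop waves equal} on $M_2$ to return to the multiplier statement.

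The hard part will be the careful bookkeeping needed to show that $W$ is $H$-invariant and descends correctly under $\pi$, so that the local intertwining $\pi_* D_1 = D_2 \pi_*$ can be applied pointwise at $y_0$. The pointwise sum defining $W$ is a priori infinite (indexed by $\Gamma_1$), but only finitely many terms are nonzero at any given point thanks to finite propagation speed; this finiteness is what makes the reindexing by $h_0$ legitimate and lets one extract genuine $H$-invariance from the folding-map structure.
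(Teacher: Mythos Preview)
Your proposal is correct, and it takes a genuinely different route from the paper's proof.

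The paper exploits the group property $e^{itD_1}=(e^{i\frac{t}{n}D_1})^n$ together with the fact that $\Psi$ is a $*$-homomorphism to reduce to arbitrarily small times $\frac{t}{n}$. Once $\frac{t}{n}$ is smaller than the covering radius, one can choose the partition of unity so that $\Psi(e^{i\frac{t}{n}D_1})(\rho_k u)$ collapses to a \emph{single} lifted term $\pi_*\big(e^{i\frac{t}{n}D_1}(\pi|_{U_{j_0}^{s_0}}^*\rho_k u)\big)$; the intertwining $\pi_* D_1=D_2\pi_*$ and uniqueness then finish the argument on each piece. No $H$-invariance of an infinite sum ever enters.

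Your argument, by contrast, keeps the full time $t$ and works with the global section $W=\sum_{j,g}e^{itD_1}_{L^2}(\phi_j^g u)$, which is precisely the ``$H$-invariant section'' picture of the folding map described in subsection~\ref{subsec desc op} (equation~\eqref{eq fold T}). You trade the subdivision trick for the observation that $W$ is $H$-invariant and hence descends. This is more conceptual and avoids the auxiliary cover $\{\rho_k\}$, but it requires handling a section that is only in $L^2_T(E_1)^H$ rather than $L^2(E_1)$; the pointwise-finiteness from finite propagation is what makes the reindexing and the termwise differentiation legitimate, as you note. One small point to tighten: you establish the wave equation for $A(t)u$ pointwise, but uniqueness on $L^2(E_2)$ needs the derivative in the $L^2$ sense. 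This follows because $A(t)u$ and $iD_2A(t)u$ are smooth and supported in a fixed compact set for $t$ in a bounded interval, so pointwise convergence of difference quotients upgrades to $L^2$ convergence by dominated convergence. With that addition the argument is complete.
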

\begin{proof}
	For $j=1,2$, let $e^{itD_j}_{L^2}$ be the wave operator on $L^2(E_j)$. By \eqref{eq L2 multiplier}, this operator extends uniquely to a bounded multiplier of $C^*_{\textnormal{max}}(M_j)^{\Gamma_j}$. By Proposition \ref{prop waves equal}, we have $e^{itD_j}_{L^2}=e^{itD_j}\in\mathcal{M}_j$. Thus to prove this proposition, it suffices to show that $\Psi(e^{itD_1}_{L^2})=e^{itD_2}_{L^2}$, as elements of $B_{\textnormal{fp}}(L^2(E_j))^{\Gamma_j}$.

As a notational convenience, we will write $e^{itD_j}$ for $e^{itD_j}_{L^2}\in B_{\textnormal{fp}}(L^2(E_j))^{\Gamma_j}$.	
	Let the open covers $\mathcal{U}_{M_1}$ and $\mathcal{U}_{M_2}$ be as in \eqref{eq POU M1} and \eqref{eq POU M2}, so that by definition, there exists some $\epsilon>0$ such that each ball of diameter $\epsilon$ in $M_2$ is evenly covered with respect to $\pi\colon M_1\to M_2$. 
	
	Let $\{V_k\}$ be another open cover of $M_2$ such that each $V_k$ has diameter at most $\frac{\epsilon}{2}$ and such that any compact subset of $M_2$ intersects only finitely many of the $V_k$. Let $\{\rho_k\}$ be a partition of unity subordinate to $\{V_k\}$. 
	
	Choose a positive integer $n$ such that $\frac{t}{n}<\frac{\epsilon}{8}$. Now since
	$$e^{itD_1}=\big(e^{i\frac{t}{n}D_1}\big)^n,$$
	and $\Psi$ is a $*$-homomorphism, it suffices to show that $\Psi(e^{i\frac{t}{n}D_1})=e^{i\frac{t}{n}D_2}$. Noting that any section $u\in C_c(E_2)$ can be written as a finite sum $u=\sum_k\rho_k u,$ we have
	$$\Psi(e^{i\frac{t}{n}D_1})u=\sum_k\Psi(e^{i\frac{t}{n}D_1})(\rho_k u).$$
	We first claim that for each $k$ we have
	\begin{equation}
	\label{eq kth summand}
	\Psi(e^{i\frac{t}{n}D_1})(\rho_k u)=e^{i\frac{t}{n}D_2}(\rho_k u).
	\end{equation}
	To see this, note that the ball of radius $\frac{\epsilon}{8}$ around $\supp(\rho_k u)$ has diameter at most $\epsilon$ and so is evenly covered with respect to $\pi$. Since the definition of $\Psi$ is independent of the choice of compatible partitions of unity by Proposition \ref{prop folding}, we may work with a partition of unity $\{\phi_j^{[g]}\}$ for $M_2$ subordinate to a cover $\mathcal{U}_{M_2}$, such that $B_{\frac{\epsilon}{4}}(\supp(\rho_k u))\subseteq U_{j_0}$ for some open set $U_{j_0}^{[s_0]}\in\mathcal{U}_{M_2}$ and $\phi_{j_0}^{[s_0]}\equiv 1$ on $B_{\frac{\epsilon}{8}}(\supp(\rho_k u))$, for some $s_0\in S$ and $j_0\in I$. By the definition of the folding map \eqref{eq folding} applied to this choice of open cover, we have
	\begin{align*}
	\Psi(e^{i\frac{t}{n}D_1})(\rho_k u)&=\sum_{\substack{g\in\Gamma_1,\\s\in S}}\sum_{i,j\in I}\phi_i^{[g]}\big(e^{i\frac{t}{n}D_1}\phi_j^s(\rho_k u)\big)\\
	&=\phi_{j_0}^{[s_0]}\big(e^{i\frac{t}{n}D_1}\big(\pi|_{U_{j_0}^{s_0}}^*(\rho_k u)\big)\big)\\
	&=\pi_*\big(e^{i\frac{t}{n}D_1}\big(\pi|_{U_{j_0}^{s_0}}^*(\rho_k u)\big)\big).
	\end{align*}
	Now the wave equation on $M_1$ reads
	$$\frac{\partial}{\partial t}\big(e^{i\frac{t}{n}D_1}\big(\pi|_{U_{j_0}^{s_0}}^*(\rho_k u)\big)\big)=iD_1\big(e^{i\frac{t}{n}D_1}\big(\pi|_{U_{j_0}^{s_0}}^*(\rho_k u)\big)\big).$$
	Applying $\pi_*$ to both sides of this equation and using that $D_1$ is the lift of $D_2$, we obtain
	$$\frac{\partial}{\partial t}\big(\Psi\big(e^{i\frac{t}{n}D_1}\big)(\rho_k u)\big)=iD_2\big(\Psi\big(e^{i\frac{t}{n}D_1}\big)(\rho_k u)\big),$$
	whence \eqref{eq kth summand} follows from uniqueness of the solution to the wave equation on $M_2$. 
	
	Taking a sum over $k$ now yields
	$$\Psi(e^{i\frac{t}{n}D_1})u=\sum_k\Psi(e^{i\frac{t}{n}D_1})(\rho_k u)=\sum_k e^{i\frac{t}{n}D_2}(\rho_k u)=e^{i\frac{t}{n}D_2}u.$$
	Thus $\Psi(e^{i\frac{t}{n}D_1})=e^{i\frac{t}{n}D_2}$, as bounded operators on $L^2(E_2)$. By our previous remarks, this means that $\Psi(e^{it{D_1}})=e^{itD_2}$.
\end{proof}
Applying Fourier inversion together with Proposition \ref{prop wave} leads to:
\begin{proposition}
	\label{prop chi}
	For any $f\in C_0(\mathbb{R})$ we have 
	$$\Psi(f(D_1))=f(D_2)\in\mathcal{M}_2.$$
\end{proposition}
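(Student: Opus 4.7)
The plan is to apply Fourier inversion to reduce the desired identity to the already-established equality $\Psi(e^{itD_1}) = e^{itD_2}$ of Proposition \ref{prop wave}, and then to extend to all of $C_0(\mathbb{R})$ by a density argument. First I would fix the dense subspace $\mathcal{A} \subseteq C_0(\mathbb{R})$ consisting of Schwartz functions $f$ whose Fourier transforms $\hat f$ lie in $C_c^{\infty}(\mathbb{R})$, and establish for each such $f$ and each $j = 1,2$ the representation
\begin{equation*}
f(D_j) \;=\; \frac{1}{2\pi} \int_{\mathbb{R}} \hat f(t)\, e^{itD_j}\, dt
\end{equation*}
as a Bochner integral in $C^*_{\textnormal{max}}(M_j)^{\Gamma_j}$. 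For any $\kappa \in \mathcal{S}^{\Gamma_j}$, the path $t \mapsto e^{itD_j}\kappa$ is norm-continuous by (the proof of) Lemma \ref{lem pointwise wave} and Proposition \ref{prop waves equal}, and compactness of $\supp \hat f$ makes the integral converge in the maximal norm; that its value agrees with $f(D_j)\kappa$ follows from the uniqueness clauses in Theorem \ref{thm:functionalcalculus}, since both sides define $*$-preserving linear maps $\mathcal{A} \to \mathcal{M}_j$ determined by their action on the characters $s \mapsto e^{its}$.

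Next, I would apply $\Psi$. Since $\Psi \colon C^*_{\textnormal{max}}(M_1)^{\Gamma_1} \to C^*_{\textnormal{max}}(M_2)^{\Gamma_2}$ is a (norm-decreasing) $*$-homomorphism by Proposition \ref{prop psi surj} and the universal property of the maximal completion, it commutes with the Bochner integral above, so
\begin{equation*}
\Psi\bigl(f(D_1)\bigr) \;=\; \frac{1}{2\pi} \int_{\mathbb{R}} \hat f(t)\, \Psi(e^{itD_1})\, dt \;=\; \frac{1}{2\pi} \int_{\mathbb{R}} \hat f(t)\, e^{itD_2}\, dt \;=\; f(D_2),
\end{equation*}
using Proposition \ref{prop wave} in the middle equality. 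This establishes the desired identity on $\mathcal{A}$. To finish, I would use density of $\mathcal{A}$ in $C_0(\mathbb{R})$ in the uniform norm: both maps $f \mapsto \Psi(f(D_1))$ and $f \mapsto f(D_2)$ are norm-continuous from $C_0(\mathbb{R})$ into $C^*_{\textnormal{max}}(M_2)^{\Gamma_2}$ by Theorem \ref{thm:functionalcalculus}(i) and the norm-decreasing property of $\Psi$, so the identity passes to the closure.

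The main technical obstacle is the Fourier representation of $f(D_j)$ in the correct topology. The functional calculus in Theorem \ref{thm:functionalcalculus} is defined abstractly on the Hilbert module $C^*_{\textnormal{max}}(M_j)^{\Gamma_j}$, and identifying it with the concrete Bochner integral above requires some care. My approach would be to test both sides against an arbitrary $\kappa \in \mathcal{S}^{\Gamma_j}$, use Lemma \ref{lem pointwise wave} to justify that $t \mapsto \hat f(t)\, e^{itD_j}\kappa$ is a norm-continuous path of compact support in the Roe algebra, and then verify via the wave equation \eqref{eq max wave} that the Bochner integral satisfies the universal property characterizing $f(D_j)\kappa$ under the functional calculus. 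Once this representation is in hand, the rest of the argument is essentially formal.
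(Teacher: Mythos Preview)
Your proposal is correct and follows essentially the same route as the paper: reduce to the wave-operator identity $\Psi(e^{itD_1})=e^{itD_2}$ via Fourier inversion on the dense class of Schwartz functions with compactly supported Fourier transform, then extend to all of $C_0(\mathbb{R})$ by density and continuity of the functional calculus. The only cosmetic difference is that the paper verifies the identity by testing against an arbitrary $\kappa\in C^*_{\textnormal{max}}(M_2)^{\Gamma_2}$, lifting it via surjectivity of $\Psi$ to $\kappa_0\in C^*_{\textnormal{max}}(M_1)^{\Gamma_1}$, and computing $\Psi(f(D_1))\kappa=\Psi(f(D_1)\kappa_0)$; you instead push $\Psi$ through the Bochner integral directly, which is legitimate since for $f\in\mathcal{A}$ the element $f(D_1)$ already lies in the Roe algebra rather than just its multiplier algebra.
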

\begin{proof}
Suppose first that $f\in\mathcal{S}(\mathbb{R})$ with compactly supported Fourier transform. By the Fourier inversion formula, we have
	\begin{equation}
	\label{eq Fourier inv}
	f(D_j)=\frac{1}{2\pi}\int_{\mathbb{R}}\widehat{f}(t)e^{itD_j}\,dt,
	\end{equation} 
	where the integral converges strongly in $\mathcal{M}_j$. Now for any $\kappa\in C^*_\textnormal{max}(M_2)^{\Gamma_2}$, Proposition \ref{prop psi surj} implies that there exists 
	$\kappa_0\in C^*_\textnormal{max}(M_1)^{\Gamma_1}$ such that $\Psi(\kappa_0)=\kappa$. Since $\Psi$ is a $*$-homomorphism,
	\begin{align*}
	\Psi(f(D_1))(\kappa)&=\Psi(f(D_1)\kappa_0)
	=\frac{1}{2\pi}\int_\mathbb{R}\widehat{f}(t)\Psi(e^{itD_1}\kappa_0)dt.
	\end{align*}
	By Proposition \ref{prop wave}, this equals
	\begin{align*}
	\frac{1}{2\pi}\int_{\mathbb{R}}\widehat{f}(t)e^{itD_2}\kappa\,dt=f(D_2)\kappa.
	\end{align*}
	This proves the claim for $f\in\mathcal{S}(\mathbb{R})$. The general claim now follows from density of $\mathcal{S}(\mathbb{R})$ in $C_0(\mathbb{R})$.

%
\end{proof}

\begin{proof}[Proof of Theorem \ref{thm funct higher ind}]
	The expressions \eqref{eq even index} and \eqref{eq odd index} show that, for $j=1,2$, the higher index of $D_j$ is represented by a matrix $A_j(\chi)$ whose entries are operators formed using functional calculus of $D_j$. More precisely, if the initial operator $D_N$ on $N$ is ungraded, as is typically the case for $M$ odd-dimensional, then 
	\begin{equation}
	\label{eq Aj ungraded}
	A_j(\chi)=e^{\pi i(\chi+1)}(D_j).
	\end{equation}
	When $D_j$ is odd-graded with respect to a $\mathbb{Z}_2$-grading on the bundle $E_j=E_j^+\oplus E_j^-$, as typically occurs when $\dim N$ is even, we have a direct sum decomposition $\chi(D_j)=\chi(D_j)^+\oplus\chi(D_j)^-$. In this case, the index element is represented explicitly by the matrix
	\begin{equation}
	\label{eq Aj graded}
	A_j(\chi)=\left(\begin{smallmatrix}
			(1-\chi(D_j)^-\chi(D_j)^+)^2 & \,\,\chi(D_j)^-(1-\chi(D_j)^+\chi(D_j)^-)\\[1ex]
			\chi(D_j)^+(2-\chi(D_j)^-\chi(D_j)^+)(1-\chi(D_j)^-\chi(D_j)^+) & \,\,\chi(D_j)^+\chi(D_j)^-(2-\chi(D_j)^+\chi(D_j)^-)-1
		\end{smallmatrix}\right)	
	\end{equation}

	
	Observe that in either case, each entry of $A_j$ is an operator of the form $f(D_j)$, for some $f\in C_0(\mathbb{R})$ (modulo grading and the identity operator). By Proposition \ref{prop chi}, $\Psi$ maps each entry of $A_1$ to the corresponding entry of $A_2$. Hence $\Psi_*[A_1]=[A_2]$, which proves the claim.
\end{proof}
\begin{remark}
When $\Gamma_1=\pi_1 N$ and $\Gamma_2$ is the trivial group, Theorem \ref{thm funct higher ind} reduces to the maximal version of Atiyah's $L^2$-index theorem mentioned in section \ref{sec intro}.

Atiyah's original $L^2$-index theorem, which uses the von Neumann trace $\tau$ instead of the folding map, can be proved by an argument along lines similar to the proof of Theorem \ref{thm funct higher ind}.
\end{remark}
\hfill\vskip 0.2in
\section{Functoriality for the higher rho invariant}
\label{sec higher rho}
The higher index is a primary obstruction to the existence of positive scalar curvature metrics on a manifold. When the manifold is spin with positive scalar curvature, so that the higher index of the Dirac operator vanishes, one can define a secondary invariant called the \emph{higher rho invariant}, introduced in \cite{Roetopology,HR3}. This is an obstruction to the inverse of the Dirac operator being local \cite{Hongzhi}. In this section we show that the higher rho invariant behaves functorially under the map $\Psi_{L,0}$ from Definition \ref{def folding localization}. 

\subsection{Higher rho invariant}
\label{subsec higher rho}
\hfill\vskip 0.05in
\noindent Consider the geometric situation in subsection \ref{subsec geom setup}, with the additional condition that the Riemannian manifold $N$ is spin with positive scalar curvature. The operator $D_N$ is then the Dirac operator acting on the spinor bundle $E_N$.

As the definition of the higher rho invariant is the same for either $M_1$ or $M_2$, we will simply write $M$, $\Gamma$ to mean either $M_1$, $\Gamma_1$ or $M_2$, $\Gamma_2$. Similarly, $D$ will refer to either of the lifted operators $D_1$ or $D_2$ acting on the equivariant spinor bundles $E_1$ or $E_2$ lifted from $E_N$.

Let $\kappa$ be the scalar curvature function of the lifted metric on $M$, which is uniformly positive. Let $\nabla\colon C^\infty(E)\rightarrow C^\infty(T^*M\otimes E)$ be the connection on $E$ induced by the Levi-Civita connection on $M$. 
Recall that by the Lichnerowicz formula,
$$D^2=\nabla^*\nabla+\frac{\kappa}{4}.$$
Since $\kappa$ is uniformly positive, $D^2$ is strictly positive as an unbounded operator on the Hilbert module $C^*_{\textnormal{max}}(M)^{\Gamma}$. Thus we may use the functional calculus from Theorem \ref{thm:functionalcalculus} to form the operator
$$F_0(D)\coloneqq\frac{D}{|D|},$$
an element of the multiplier algebra $\mathcal{M}=\mathcal{M}(C^*_{\textnormal{max}}(M)^{\Gamma})$. Observe that $\frac{F_0(D)+1}{2}$ is a projection in $\mathcal{M}$. 

Since $D$ is invertible, there exists $\epsilon>0$ such that the spectrum of $D$ is contained in $\mathbb{R}\backslash(-\epsilon,\epsilon)$. Let $\{F_t\}_{t\in\mathbb{R}^+}$ be a set of normalizing functions satisfying the following conditions:
\begin{itemize}
\item $F_t$ has compactly supported distributional Fourier transform for each $t$;
\item $\textnormal{diam}(\textnormal{supp}\,\widehat{F}_t)\to 0$ as $t\to\infty$;
\item $F_t\to\frac{x}{|x|}$ uniformly on $\mathbb{R}\backslash(-\epsilon,\epsilon)$ in the limit $t\to 0$.
\end{itemize}
In the limit $t\to\infty$, the propagations of $F_t(D)$ tend to $0$. By Theorem 4.2 (i), as $t\to 0$, the operators $F_t(D)$ converge to $F_0(D)$ in the norm of $\mathcal{M}$.




Define a path $\mathbb{R}^{\geq 0}\rightarrow(C^*_{\textnormal{max}}(M)^{\Gamma})^+$ given by 
\begin{equation}
\label{eq RD}
R_{D}\colon t\mapsto A(F_t),
\end{equation}
where the matrix $A(F_t)$ is defined by \eqref{eq Aj ungraded} or \eqref{eq Aj graded} depending upon the dimension of $N$ (the subscript $j$ is omitted).
Noting that
$$R_{D}(0)=A(F_0)=
\begin{cases}
	\left(\begin{smallmatrix}0\,&\,0\\[1ex]0\,&\,\,1\end{smallmatrix}\right)&\textnormal{ if $\dim N$ is even},\\[1ex]
	\,\,\,\,\,\,\,1&\textnormal{ if $\dim N$ is odd},
\end{cases}
$$
one sees that $R_{D}$ is a matrix with entries in
$\big(C^*_{L,0,\textnormal{max}}(M)^{\Gamma}\big)^+$.
\begin{definition}
	\label{def higher rho}
	The \emph{higher rho invariant} of $D$ on the Riemannian manifold $M$ is
	$$\rho_{\textnormal{max}}(D)=\left[R_{D}\right]\in K_\bullet(C^*_{L,0,\textnormal{max}}(M)^{\Gamma}),$$ 
	where $\bullet=\dim M \textnormal{ (mod } 2)$.
\end{definition}
\hfill\vskip 0.01in
\subsection{Functoriality}
\hfill\vskip 0.05in
\noindent We are now ready to complete the proof of our main result, Theorem \ref{thm funct higher rho}, using the tools we developed in sections \ref{sec folding}, \ref{sec funct calc}, and \ref{sec higher ind}.

 	Recall from Definition \ref{def folding localization} that we have a folding map at level of obstructions algebras,
 	$$\Psi_{L,0}\colon C^*_{L,0,\textnormal{max}}(M_1)^{\Gamma_1}\to C^*_{L,0,\textnormal{max}}(M_2)^{\Gamma_2}.$$
 	This map is well-defined because the folding map $\Psi$ at the level of maximal equivariant Roe algebras preserves small propagation of operators, by Proposition \ref{prop prop}. The induced map on $K$-theory, 
 	$$(\Psi_{L,0})_*\colon K_{\bullet}\left(C^*_{L,0,\textnormal{max}}(M_1)^{\Gamma_1}\right)\rightarrow K_{\bullet}\left(C^*_{L,0,\textnormal{max}}(M_2)^{\Gamma_2}\right),$$
 	implements functoriality of the maximal higher rho invariant.
\begin{proof}[Proof of Theorem \ref{thm funct higher rho}]
	For $j=1,2$, let the higher rho invariants of $D_j$ be denoted by $\rho_{\textnormal{max}}(D_j)$, as in Definition \ref{def higher rho}. By \eqref{eq RD}, this class is represented the path
	$$R_{D_j}\colon t\mapsto A_j(F_t),$$
	where the matrix $A_j$ is as in \eqref{eq Aj ungraded} and \eqref{eq Aj graded}. By Definition \ref{def folding localization}, the map $(\Psi_L)_*$ takes the class 
	$$[R_{D_1}]\in K_\bullet\big(C^*_{L,0,\textnormal{max}}(M_1)^{\Gamma_1}\big)$$ to the class of the composed path 
	$$\Psi\circ R_{D_1}\colon t\mapsto\Psi(A_1(F_t))$$
	in $K_\bullet\big(C^*_{L,0,\textnormal{max}}(M_2)^{\Gamma_2}\big)$. Since each entry of $A_j$ is an operator of the form $f(D_j)$, for some $f\in C_0(\mathbb{R})$ (up to grading and the identity operator), Proposition \ref{prop chi} implies that for each $t\geq 0$, we have
	$$\Psi\circ R_{D_1}(t)=\Psi(A_1(F_t))=A_2(F_t)=R_{D_2}(t).$$
	It follows that	$(\Psi_{L,0})_*(\rho_{\textnormal{max}}(D_1))=\rho_{\textnormal{max}}(D_2)\in K_\bullet(C^*_{L,0,\textnormal{max}}(M_2)^{\Gamma_2})$.
\end{proof}

\hfill\vskip 0.2in
\section{Generalizations to the non-cocompact setting}
\label{sec generalizations}
The methods in this paper can be used to establish analogous results in more general geometric settings. In this final section, we give two such generalizations, both involving non-cocompact actions.

We will work with the non-cocompact analogue of the geometric setup in subsection \ref{subsec geom setup}, so that the manifold $N$ is no longer assumed to be compact. We will assume throughout this section that the operator $D_N$ has unit propagation speed. In place of the finite partition of unity \eqref{eq POU N} used to define the folding map $\Psi$, we take a locally finite partition of unity $\mathcal{U}_N$ whose elements are evenly covered with respect to the projections $p_1\colon M_1\to N$ and $p_2\colon M_2\to N$, and with the property that any compact subset of $N$ intersects only finitely many elements of $\mathcal{U}_N$. The equivariant partitions of unity $\mathcal{U}_{M_1}$ and $\mathcal{U}_{M_2}$ of $M_1$ and $M_2$ are defined in the same way according to \eqref{eq POU M1} and \eqref{eq POU M2}. 

The local nature of the wave operator $e^{itD}$ means that Proposition \ref{prop wave} generalizes naturally to this setting:

\begin{proposition}
	\label{prop wave non-cocompact}
	Let $N$, $M_1$, and $M_2$ be as in this section, with $N$ not necessarily compact. Then for all $t\in\mathbb{R}$, we have $\Psi(e^{itD_1})=e^{itD_2}$.
\end{proposition}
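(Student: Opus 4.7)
The plan is to adapt the proof of Proposition \ref{prop wave} to the non-cocompact setting, exploiting the unit propagation speed of $D$ and the fact that it suffices to test operator equality on compactly supported sections.

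First I would verify that the analogue of Proposition \ref{prop waves equal} still holds: the $L^2$-wave operator $e^{itD_j}_{L^2}$ and the functional-calculus wave operator $e^{itD_j}$ agree as multipliers of $C^*_{\textnormal{max}}(M_j)^{\Gamma_j}$. This uses the maximal functional calculus of \cite{GXY}, which is available in the non-cocompact setting under the geometric assumption already in force. Once this is in place, the task reduces to verifying $\Psi(e^{itD_1}_{L^2}) = e^{itD_2}_{L^2}$ as elements of $\mathcal{B}_{\textnormal{fp}}(L^2(E_j))^{\Gamma_j}$.

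Next I would reduce to small times. Choose $\epsilon > 0$ so that every element of $\mathcal{U}_N$ has diameter at most $\epsilon$, and then a positive integer $n$ with $|t|/n < \epsilon/8$. Writing $e^{itD_1} = (e^{i(t/n)D_1})^n$ and using that $\Psi$ is a $*$-homomorphism, it suffices to establish $\Psi(e^{i(t/n)D_1}) = e^{i(t/n)D_2}$. For any $u \in C_c(E_2)$, pick a locally finite partition of unity $\{\rho_k\}$ on $M_2$ subordinate to a cover by sets of diameter at most $\epsilon/2$. Because $\supp u$ is compact and $\{\rho_k\}$ is locally finite, only finitely many $\rho_k u$ are non-zero, so $u = \sum_k \rho_k u$ is a finite sum.

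For each such $k$ I would carry out verbatim the local argument from Proposition \ref{prop wave}: the ball $B_{\epsilon/8}(\supp \rho_k u)$ is evenly covered by $\pi$; the freedom in choosing partitions of unity afforded by Proposition \ref{prop folding} allows one to arrange an adapted $\{\phi_j^{[g]}\}$ with a single bump $\phi_{j_0}^{[s_0]}$ equal to $1$ on this ball; unit propagation speed confines $e^{i(t/n)D_1}\bigl(\pi|_{U_{j_0}^{s_0}}^* \rho_k u\bigr)$ to the evenly covered region; and $\pi_*$ intertwines the two local wave equations because $D_1$ lifts $D_2$. Uniqueness of solutions then yields $\Psi(e^{i(t/n)D_1})(\rho_k u) = e^{i(t/n)D_2}(\rho_k u)$, and summing over $k$ concludes the argument. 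The main obstacle is conceptual rather than technical: one must ensure that the sums appearing in the definition of $\Psi$ remain well-defined and can be manipulated as in the cocompact proof, given that $\mathcal{U}_{M_1}$ and $\mathcal{U}_{M_2}$ are now genuinely infinite. The saving grace is that finite propagation of $e^{itD_j}$ together with compact support of the test section confines every pointwise computation to a compact region, so each sum that matters reduces to a finite sum exactly where it needs to.
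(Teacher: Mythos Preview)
There is a genuine gap in your reduction to small times. You choose a single $\epsilon>0$ up front, before fixing the test section $u$, and later assert that ``the ball $B_{\epsilon/8}(\supp \rho_k u)$ is evenly covered by $\pi$''. But the condition you impose on $\epsilon$ --- that every element of $\mathcal{U}_N$ has diameter at most $\epsilon$ --- does not imply that every set of diameter less than $\epsilon$ (or every $\epsilon$-ball) in $M_2$ is evenly covered. In the cocompact proof this conclusion comes from a Lebesgue-number argument: compactness of $N$ guarantees a uniform $\epsilon$ such that \emph{every} ball $B_\epsilon(x)$ is evenly covered. When $N$ is non-compact there is no such uniform $\epsilon$ in general, and this is precisely the obstruction the paper singles out.

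The paper's remedy is to reverse the order of quantifiers. One first fixes $t\in\mathbb{R}$ and $u\in C_c(E_2)$; by unit propagation speed the section $e^{itD_2}u$ is supported in the \emph{compact} set $B_{|t|}(\supp u)$. Compactness of this set then yields an $\epsilon>0$ (depending on $t$ and $u$) such that $B_\epsilon(x)$ is evenly covered for every $x\in B_{|t|}(\supp u)$. With this $\epsilon$ in hand, your local argument --- splitting $e^{itD_1}$ into $n$ factors with $|t|/n<\epsilon/8$, decomposing $u$ via a partition of unity with small supports, and matching local wave evolutions --- goes through verbatim, but now only over the relevant compact region rather than over all of $M_2$. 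Your last paragraph misidentifies the obstacle: the infinite sums in the definition of $\Psi$ are harmless (finite propagation and properness already make every pointwise sum finite); the real issue is the non-existence of a global even-covering radius.
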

\begin{proof}
We adapt the proof of Proposition \ref{prop wave}, indicating only what needs to be changed. Let the open covers $\mathcal{U}_{M_1}$ and $\mathcal{U}_{M_2}$ be as above. The difference now is that since $N$ may be non-compact, we cannot assume the existence of a uniformly positive covering diameter $\epsilon$ as in the proof of Proposition \ref{prop wave}. 

Instead, the key point is to observe that for any \emph{fixed} $t\in\mathbb{R}$ and $u\in C_c(E_2)$, the section $e^{itD_2} u$ is supported within the compact subset $B_t(\supp u)$. Thus we can find $\epsilon>0$ such that for all $x\in B_t(\supp u)$, the ball $B_\epsilon(x)$ is evenly covered with respect to the projection $\pi\colon M_1\to M_2$. From here, we proceed precisely as in the proof of Proposition \ref{prop wave}, with $M_2$ replaced by $B_t(\supp u)$, to show that $\Psi(e^{itD_1})u=e^{itD_2}u$. Since $t$ and $u$ are arbitrary, we conclude.
\end{proof}

\subsection{Operators invertible at infinity}
\label{subsec inv infty}
\hfill\vskip 0.05in
\noindent 
In this subsection, suppose that the operator $D_N$ is \emph{invertible at infinity}, meaning that there exists some compact subset $Z_N\subseteq N$ on whose complement we have $D_N^2\geq a$ for some $a>0$. An important special case is when $N$ is spin, $D_N$ is the Dirac operator, and the metric $g_N$ has uniformly positive scalar curvature outside of $Z_N$. 

For $j=1,2$, the lifted operator $D_j$ then satisfies the analogous relation $D_j^2\geq a$ on the complement of a \emph{cocompact}, $\Gamma_j$-invariant subset $Z_j\subseteq M_j$. We can define a version of the higher index of $D_j$, localized around $Z_j$, as follows (see also \cite{Roecurvature} and \cite[section 3]{GHM3}). 

For each $R>0$, let $C^*_{\textnormal{max}}(B_R(Z_j))^{\Gamma_j}$ be the maximal equivariant Roe algebra of the $R$-neighborhood of $Z_j$. Since $B_R(Z_j)$ is cocompact, this algebra is isomorphic to $C^*_\textnormal{max}(\Gamma_j)\otimes\mathcal{K}$ by Remark \ref{rem max norm}. One can then show that for any $f\in C_c(-a,a)$, we have
	$$f(D_j)\in\lim_{R\rightarrow\infty}C^*_{\textnormal{max}}(B_R(Z_j))^{\Gamma_j},$$
	where $\lim_{R\rightarrow\infty}C^*_{\textnormal{max}}(B_R(Z_j))^{\Gamma_j}$ is the direct limit of these $C^*$-algebras. Indeed, this limit algebra is isomorphic to $C^*_\textnormal{max}(\Gamma)\otimes\mathcal{K}$, where $\mathcal{K}$ denotes the compact operators on a (not necessarily compact) fundamental domain of the $\Gamma$-action. The construction from subsection \ref{subsec higher ind}
	then gives an index element
		$$\Ind_{\Gamma_j,\textnormal{max}}D_j\in K_\bullet(C^*_{\textnormal{max}}(\Gamma_j)).$$
	
	Similar to the cocompact case, we have the following version of Theorem \ref{thm funct higher ind} for operators that are invertible at infinity:
	\begin{theorem}
	\label{thm generalized funct higher ind}
	Let $N$ be a Riemannian manifold and $D_N$ a first-order, self-adjoint elliptic differential operator acting on a bundle $E_N\rightarrow N$. Assume that $D_N$ has unit propagation speed. Let $M_1$ and $M_2$ be Galois covers of $N$ with deck transformation groups $\Gamma_1$ and $\Gamma_2\cong\Gamma_1/H$ respectively, for a normal subgroup $H$ of $\Gamma_1$. Let $D_1$ and $D_2$ be the lifts of $D_N$ to $M_1$ and $M_2$ respectively. Then the map on $K$-theory induced by the folding map $\Psi$ relates the maximal higher indices of $D_1$ and $D_2$:
	$$\Psi_*(\Ind_{\Gamma_1,\textnormal{max}}D_1)=\Ind_{\Gamma_2,\textnormal{max}}D_2\in K_\bullet\big(C^*_{\textnormal{max}}(\Gamma_2)\big).$$
	\end{theorem}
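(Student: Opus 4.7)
The plan is to adapt the proof of Theorem \ref{thm funct higher ind} to this non-cocompact, invertible-at-infinity setting, replacing Proposition \ref{prop wave} with its generalization Proposition \ref{prop wave non-cocompact}. First I would verify that the folding map $\Psi$ respects localization around the subsets $Z_j$. Since $\pi(Z_1)\subseteq Z_2$ and $\pi$ is distance-non-increasing, Proposition \ref{prop prop} together with the pointwise formula \eqref{eq pointwise folding} show that $\Psi$ sends $C^*_{\textnormal{max}}(B_R(Z_1))^{\Gamma_1}$ into $C^*_{\textnormal{max}}(B_R(Z_2))^{\Gamma_2}$ for every $R>0$, so $\Psi$ induces a $*$-homomorphism between the direct limit algebras
\[
\Psi\colon \lim_{R\to\infty} C^*_{\textnormal{max}}(B_R(Z_1))^{\Gamma_1} \to \lim_{R\to\infty} C^*_{\textnormal{max}}(B_R(Z_2))^{\Gamma_2},
\]
which, by Lemma \ref{lem surjective}, further extends to the respective multiplier algebras.

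Second, I would generalize Proposition \ref{prop chi} using Proposition \ref{prop wave non-cocompact}. For any $f\in\mathcal{S}(\mathbb{R})$ whose Fourier transform $\widehat{f}$ has compact support, Fourier inversion gives
\[
f(D_j)=\frac{1}{2\pi}\int_{\mathbb{R}}\widehat{f}(t)e^{itD_j}\,dt,
\]
and the integral converges strongly in the multiplier algebra above, since $e^{itD_j}$ has propagation at most $|t|$ by the unit-speed hypothesis. Applying $\Psi$ under the integral, together with Proposition \ref{prop wave non-cocompact}, yields $\Psi(f(D_1))=f(D_2)$. A density argument exactly as in Proposition \ref{prop chi} then extends this identity to all $f\in C_0(\mathbb{R})$.

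The final step is to choose a normalizing function $\chi$ such that $\chi^2-1$ is supported in $(-\sqrt{a},\sqrt{a})$. Since $D_j^2\geq a$ on $M_j\setminus Z_j$, the operator $f(D_j)$ associated to any element of $C_0(\mathbb{R})$ vanishing outside $(-\sqrt{a},\sqrt{a})$ lies in $\lim_R C^*_{\textnormal{max}}(B_R(Z_j))^{\Gamma_j}$, which is precisely the input used to define the localized higher index. As in the proof of Theorem \ref{thm funct higher ind}, the higher index element is represented by a matrix $A_j(\chi)$ as in \eqref{eq Aj ungraded} or \eqref{eq Aj graded}, each entry of which differs from a constant by an operator of the form $f(D_j)$ with such $f$. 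The second step then shows that $\Psi$ sends each entry of $A_1(\chi)$ to the corresponding entry of $A_2(\chi)$, yielding
\[
\Psi_*(\Ind_{\Gamma_1,\textnormal{max}}D_1)=\Ind_{\Gamma_2,\textnormal{max}}D_2.
\]

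The main obstacle I anticipate is the technical bookkeeping around the direct limit: one must verify that $\Psi$ descends to a well-defined $*$-homomorphism at the level of these direct-limit algebras and their multipliers, and that the Fourier integral above converges in the correct topology on this enlarged algebra. Both points follow from the propagation-preserving property of $\Psi$ and the propagation bound on $e^{itD_j}$, but some care is needed to treat the interaction between the direct limit and the multiplier construction rigorously, especially when verifying that $\Psi$ commutes with the integral defining $f(D_j)$.
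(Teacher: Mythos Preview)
Your proposal is correct and follows essentially the same approach as the paper: the paper's own proof is the single sentence ``Analogous to the proof of Theorem~\ref{thm funct higher ind}, with Proposition~\ref{prop wave} replaced by Proposition~\ref{prop wave non-cocompact}.'' You have simply unpacked what that entails, supplying the details about localization near $Z_j$, the Fourier-inversion argument, and the choice of normalizing function that the paper leaves implicit.
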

	\begin{proof}
		Analogous to the proof of Theorem \ref{thm funct higher ind}, with Proposition \ref{prop wave} replaced by Proposition \ref{prop wave non-cocompact}.
	\end{proof}

\hfill\vskip 0.1in
\subsection{Manifolds with equivariantly bounded geometry}
\hfill\vskip 0.05in
\noindent A second generalization involves non-cocompact coverings that satisfy certain additional geometric conditions. Under these conditions we obtain generalizations of both Theorems \ref{thm funct higher ind} and \ref{thm funct higher rho}. In contrast to Theorem \ref{thm generalized funct higher ind}, here we do not require invertibility of the operator at infinity, and the Roe algebra we work with does not need to be defined in terms of cocompact sets as in subsection \ref{subsec inv infty}.

Again, suppose that we are in the setup of subsection \ref{subsec geom setup}, but with $N$ not necessarily compact. We impose two conditions on the geometry of $N$ and the $\Gamma_i$-action on $M_i$ (see also \cite[subsection 2.1]{GXY}):
\begin{enumerate}[A.]
\item The Riemannian manifold $N$ has positive injectivity radius, and its curvature tensor is uniformly bounded across $N$ along with all of its derivatives.
\item For $j=1,2$, there exists a fundamental domain $\mathcal{D}_j$ for the action of $\Gamma_j$ on $M_j$ such that
$$l(\gamma)\rightarrow\infty\implies d(\mathcal{D}_j,\gamma\mathcal{D}_j)\to\infty,$$
where $l\colon\Gamma_j\to\mathbb{N}$ is a fixed length function and $d$ is the Riemannian distance on $M_j$.
\end{enumerate}

	It follows from these two assumptions and \cite[Proposition 2.14]{GXY} that the maximal equivariant Roe algebra $C^*_{\textnormal{max}}(M_j)^{\Gamma_j}$ is well-defined, along with a subalgebra $C^*_{\textnormal{max},u}(M_j)^{\Gamma_j}$ called the maximal equivariant \emph{uniform} Roe algebra. Using the latter, we constructed in \cite{GXY} a version of the functional calculus suitable for the maximal setting. 
	
	We briefly recall the definition of $C^*_{\textnormal{max},u}(M_j)^{\Gamma_j}$. Let $\mathcal{S}_u^{\Gamma_j}$ be the $*$-subalgebra of $\mathcal{S}^{\Gamma_j}$ (see section \ref{sec funct calc}) whose kernels have uniformly bounded  derivatives of all orders. In other words, an element of $\mathcal{S}_u^{\Gamma_j}$ is a bounded operator on $L^2(E_j)$ given by a Schwartz kernel $\kappa\in C_b^\infty(E_j\boxtimes E_j^*)$ such that
	\begin{enumerate}[(i)]
	\item $\kappa$ has finite propagation;
	\item $\kappa(x,y)=\kappa(\gamma x,\gamma y)$ for all $\gamma\in\Gamma_j$;
	\item Each covariant derivative of $\kappa$ is uniformly bounded over $M_j$.	
	\end{enumerate}
Then $C^*_{\textnormal{max},u}(M_j)^{\Gamma_j}$ is defined to be the closure of $\mathcal{S}_u^{\Gamma_j}$ in $C^*_{\textnormal{max}}(M_j)^{\Gamma_j}$. 

Similar to section \ref{sec funct calc}, $C^*_{\textnormal{max},u}(M_j)^{\Gamma_j}$ can be viewed as right Hilbert module over itself, with inner product and multiplication being defined by the same formula \eqref{eq Hilbert module structure}. The operator $D_j$ can be viewed as a densely defined symmetric operator on $C^*_{\textnormal{max},u}(M_j)^{\Gamma_j}$. By \cite[Theorem 3.1]{GXY}, this operator is regular and so admits a functional calculus. The construction from subsection \ref{subsec higher ind}
	then allows one to define the \emph{maximal equivariant uniform index} of $D_j$,
		$$\ind_{\Gamma_j,\textnormal{max},u}D_j\in K_\bullet(C^*_{\textnormal{max},u}(M_j)^{\Gamma_j}).$$
	The corresponding versions of the higher rho invariant can be defined as in subsection \ref{subsec higher rho}, and we denote this by
	$$\rho_{\textnormal{max,u}}(D_j)\in K_\bullet(C^*_{L,0,\textnormal{max,u}}(M_j)^{\Gamma_j}),$$
	where  $C^*_{L,0,\textnormal{max,u}}(M_j)^{\Gamma_j}$ is defined analogously to $C^*_{L,0,\textnormal{max}}(M_j)^{\Gamma_j}$, with $C^*_{\textnormal{max},u}(M_j)^{\Gamma_j}$ in place of $C^*_{\textnormal{max}}(M_j)^{\Gamma_j}$. Constructions analogous to those in subsection \ref{subsec induced} give way to folding maps at $K$-theory level, which we denote by
	$$(\Psi_u)_*\colon K_{\bullet}(C^*_{\textnormal{max},u}(M_1)^{\Gamma_1})\rightarrow K_{\bullet}(C^*_{\textnormal{max},u}(M_2)^{\Gamma_2}),$$
$$(\Psi_{L,0,u})_*\colon K_{\bullet}(C^*_{L,0,\textnormal{max},u}(M_1)^{\Gamma_1})\rightarrow K_{\bullet}(C^*_{L,0,\textnormal{max},u}(M_2)^{\Gamma_2}).$$
	
Further, Proposition \ref{prop waves equal} generalizes naturally to manifolds satisfying conditions A and B above:
\begin{proposition}
\label{prop waves equal noncocompact}
For each $t\in\mathbb{R}$, we have
$$e^{itD_j}_{L^2}=e^{itD_j}\in\mathcal{M}_j(C^*_{\textnormal{max},u}(M_j)^{\Gamma_j}),$$
with notation as in subsection \ref{subsec wave}.
\end{proposition}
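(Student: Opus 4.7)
The proof plan is to adapt the strategy of Proposition \ref{prop waves equal} to the non-cocompact setting, replacing compactness arguments with uniform estimates enabled by assumption A (bounded geometry) and assumption B (escape-to-infinity of group translates of the fundamental domain). Throughout, I would work with kernels in the dense subalgebra $\mathcal{S}_u^{\Gamma_j}$ of $C^*_{\textnormal{max},u}(M_j)^{\Gamma_j}$.

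First, I would reprove the pointwise wave equation and norm-continuity assertions of Lemma \ref{lem pointwise wave} for a kernel $\kappa \in \mathcal{S}_u^{\Gamma_j}$. The argument given in the cocompact case is already local: it uses only a compactly supported cutoff $\phi$ of $\kappa$ near a chosen point $x$, the unit propagation speed of $D_j$, and the pointwise wave equation \eqref{eq pointwise cpt} for compactly supported kernels. Since $\phi \kappa$ is compactly supported regardless of whether $\kappa$ is, this argument transfers unchanged, yielding $\bigl(\tfrac{d}{dt} e^{itD_j}_{L^2}\kappa\bigr)(x,y) = (iD_j e^{itD_j}_{L^2}\kappa)(x,y)$ pointwise on $M_j \times M_j$.

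Second, I would verify that $e^{itD_j}_{L^2} \circ T_\kappa$ actually lies in $\mathcal{S}_u^{\Gamma_j}$. Finite propagation of at most $\textnormal{prop}(\kappa) + |t|$ and $\Gamma_j$-equivariance follow immediately from unit propagation speed and from $D_j$ commuting with the $\Gamma_j$-action. For smoothness together with the uniform bounds on covariant derivatives of the kernel, I would combine the commutation $D_j^m \circ e^{itD_j}_{L^2} \circ T_\kappa = e^{itD_j}_{L^2} \circ D_j^m \circ T_\kappa$ with a Sobolev embedding whose constants are uniform across $M_j$, available under assumption A. The point is that each $D_j^m \kappa$ already has uniformly bounded covariant derivatives, and $e^{itD_j}_{L^2}$ is unitary on $L^2(E_j)$; bounded-geometry Sobolev theory then converts these $L^2$-bounds into uniform $C^k$-bounds for the resulting kernel.

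Third, I would translate the pointwise wave equation into convergence in the norm of $C^*_{\textnormal{max},u}(M_j)^{\Gamma_j}$ by invoking the uniform analogue of Remark \ref{rem max norm} established in \cite[Proposition 2.14]{GXY}: for any $r > 0$ there exists $C_r > 0$ such that any element of $\mathcal{S}_u^{\Gamma_j}$ with propagation at most $r$ has maximal norm bounded by $C_r$ times its $\mathcal{B}(L^2(E_j))$-norm. Applied to the difference quotient, whose propagation is controlled by $\textnormal{prop}(\kappa) + |t+h|$, this shows that $\{e^{itD_j}_{L^2}\}_{t\in\mathbb{R}}$ solves the wave equation on the Hilbert module $C^*_{\textnormal{max},u}(M_j)^{\Gamma_j}$. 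The uniqueness of solutions, which follows from the regularity and essential self-adjointness of $D_j$ on this Hilbert module proved in \cite[Theorem 3.1]{GXY} by the same argument as Lemma \ref{lem max wave}, then forces $e^{itD_j}_{L^2} = e^{itD_j}$ on $\mathcal{S}_u^{\Gamma_j}$ and hence as multipliers. The main obstacle is the uniform derivative bound in the second step: verifying preservation of $\mathcal{S}_u^{\Gamma_j}$ requires Sobolev constants that do not degenerate at infinity, which is exactly where assumption A enters in an essential way; without bounded geometry one would recover only local smoothness of the propagated kernel, insufficient for membership in the uniform Roe algebra.
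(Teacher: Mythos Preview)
Your proposal is correct and follows essentially the same approach as the paper: the paper's proof simply says to proceed as in Proposition \ref{prop waves equal} with Lemma \ref{lem pointwise wave} replaced by its non-cocompact analogue (Lemma \ref{lem norm cts noncocompact}), whose proof uses exactly the bounded-geometry Sobolev argument you outline in your second step to obtain uniform kernel bounds on $iD_j e^{isD_j}_{L^2}\kappa$. Your explicit invocation of \cite[Proposition 2.14]{GXY} for the maximal-norm estimate and \cite[Theorem 3.1]{GXY} for regularity spells out what the paper leaves implicit, but the logical structure is identical.
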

\begin{proof}
The proof proceeds exactly as for Proposition \ref{prop waves equal}, but with Lemma \ref{lem pointwise wave} replaced by Lemma \ref{lem norm cts noncocompact} below.	
\end{proof}
\begin{lemma}
\label{lem norm cts noncocompact}
Let $N$, $M_1$, and $M_2$ be as in this section, with $N$ not necessarily compact. Let $\kappa\in\mathcal{S}_u^{\Gamma_j}$, and let $e^{itD_j}_{L^2}\kappa$ denote the smooth Schwartz kernel of $e^{itD_j}_{L^2}\circ T_\kappa$. Then the path $t\mapsto e^{itD_j}_{L^2}\kappa$ is continuous with respect to the operator norm, and we have
\begin{equation}
\label{eq wave kernel}
\lim_{h\to 0}\,\,\,\bigg\|\frac{e^{i(t+h)D_j}_{L^2}\circ T_\kappa-e^{itD_j}_{L^2}\circ T_\kappa}{h}-iD_j e^{itD_j}_{L^2}\circ T_\kappa\bigg\|_{\mathcal{B}(L^2(E))}= 0.
\end{equation}
\end{lemma}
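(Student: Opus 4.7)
The plan is to adapt the proof of Lemma \ref{lem pointwise wave} to the non-cocompact setting, where compactness of $N$ is replaced by the bounded geometry assumption A together with the uniform bound on kernel derivatives built into the definition of $\mathcal{S}_u^{\Gamma_j}$. The first observation is that the pointwise wave equation
$$\Big(\tfrac{d}{dt} e^{itD_j}_{L^2}\kappa\Big)(x,y) = (iD_j e^{itD_j}_{L^2}\kappa)(x,y)$$
carries over verbatim from the proof of Lemma \ref{lem pointwise wave}, because the cutoff argument there is purely local (it only requires unit propagation speed of $D_j$ and compactness of the support of $\phi \kappa$ once we localize in the $x$-variable). Similarly, the pointwise mean value estimate
$$|e^{itD_j}_{L^2}\kappa(x,y) - \kappa(x,y)| \leq |t| \sup_{|s|\leq t_0} |iD_j e^{isD_j}_{L^2}\kappa(x,y)|$$
for $|t| \leq t_0$ goes through unchanged.

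Where the original proof invokes cocompactness (equation \eqref{eq C t0}) to conclude that the pointwise supremum of $|D_j e^{isD_j}_{L^2}\kappa|$ is finite, I would instead establish the uniform bound
$$\sup_{\substack{x,y \in M_j \\ |s|\leq t_0}} |D_j e^{isD_j}_{L^2}\kappa(x,y)| \leq C_{t_0,\kappa} < \infty$$
by combining three ingredients: uniform local Sobolev embedding, uniform elliptic regularity, and $L^2$-isometry of the wave operator. By assumption A, $M_j$ has bounded geometry, so there exist uniform constants (independent of $x$) such that
$$|u(x)| \leq C_1 \|u\|_{H^k(B_1(x))} \leq C_2 \bigl(\|u\|_{L^2(B_2(x))} + \|D_j^k u\|_{L^2(B_2(x))}\bigr)$$
for $k > \dim(N)/2$. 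Applying this to $u = D_j e^{isD_j}_{L^2}\kappa(\cdot, y)$ and using that $D_j^m$ commutes with $e^{isD_j}_{L^2}$ together with $L^2$-unitarity, the right-hand side reduces to $L^2$-norms of $D_j^m \kappa(\cdot, y)$ over balls. Since $\kappa \in \mathcal{S}_u^{\Gamma_j}$ has uniformly bounded covariant derivatives and finite propagation in the $x$-variable, and since bounded geometry gives uniform volume bounds on balls of fixed radius, these $L^2$ norms are bounded uniformly in $y$.

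Given the uniform pointwise bound, the next step is to convert it into an operator norm bound on the kernel $e^{itD_j}_{L^2}\kappa - \kappa$. This kernel has propagation at most $\textnormal{prop}(\kappa) + |t|$, so by the Schur test applied with bounded geometry
$$\|e^{itD_j}_{L^2}\kappa - \kappa\|_{\mathcal{B}(L^2(E_j))} \leq \sup_{x,y} |e^{itD_j}_{L^2}\kappa(x,y) - \kappa(x,y)| \cdot V_{\textnormal{prop}(\kappa)+t_0},$$
where $V_R \coloneqq \sup_x \textnormal{vol}(B_R(x))$ is finite by condition A. Combining with the pointwise mean value estimate gives $\|e^{itD_j}_{L^2}\kappa - \kappa\|_{\mathcal{B}(L^2)} \leq C|t|$, proving norm continuity at $t=0$; norm continuity at general $t$ follows from the group property of $e^{itD_j}_{L^2}$.

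For the derivative estimate \eqref{eq wave kernel}, the same machinery applies with a second-order Taylor expansion in place of the mean value estimate: one needs a uniform pointwise bound on $D_j^2 e^{isD_j}_{L^2}\kappa(x,y)$, which is obtained by exactly the same Sobolev/elliptic argument (now using that $D_j^{k+2}\kappa$ also has uniformly bounded covariant derivatives because $\kappa \in \mathcal{S}_u^{\Gamma_j}$). The main technical obstacle is the uniform pointwise estimate on $D_j^m e^{isD_j}_{L^2}\kappa$; once this is in hand, the argument of Lemma \ref{lem pointwise wave} goes through almost mechanically with the Schur-test volume constant $V_R$ playing the role of the constant $C$ coming from cocompactness.
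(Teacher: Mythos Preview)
Your proposal is correct and follows essentially the same approach as the paper's proof: both use the pointwise mean value estimate inherited from Lemma~\ref{lem pointwise wave}, invoke uniform Sobolev/elliptic regularity on a bounded-geometry manifold to get a uniform sup-norm bound on $D_j e^{isD_j}_{L^2}\kappa$, and then convert this to an operator-norm bound using finite propagation. The paper compresses the Sobolev step into a single sentence (``it follows from Sobolev theory that \ldots''), whereas you spell out the local Sobolev embedding, the elliptic estimate, the commutation of $D_j^m$ with the wave group, and the Schur-type volume constant --- but the argument is the same.
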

\begin{proof}
	Similar to equation \eqref{eq mean value}, we have, for each $x,y\in M_j$ and $t\in[-t_0,t_0]$, 
\begin{equation*}
\Big|e^{itD_j}_{L^2}\kappa(x,y)-\kappa(x,y)\Big|\leq |t|\cdot\sup_{s\in[-t_0,t_0]}\Big|iD_je^{isD_j}_{L^2}\kappa(x,y)\Big|\,.
\end{equation*}
Since $M_j$ has bounded Riemannian geometry, it follows from Sobolev theory that the operator $iD_je^{isD_j}_{L^2}\kappa(x,y)$ has uniformly bounded smooth kernel, along with all derivatives. Indeed, since $e^{isD_j}_{L^2}$ is unitary, $\big|iD_je^{isD_j}_{L^2}\kappa(x,y)\big|$ is bounded above by a constant $C_1$ independent of $s$, $x$, and $y$. It follows that we can estimate the $L^2$-norm of the finite-propagation operator $e^{itD_j}_{L^2}\kappa-\kappa$ by
$$\norm{e^{itD_j}_{L^2}\kappa-\kappa}_{\mathcal{B}(L^2(E))}\leq C_2\cdot\sup_{x,y\in M}\Big|e^{itD_j}_{L^2}\kappa(x,y)-\kappa(x,y)\Big|\leq C_1C_2,$$
for some other constant $C_2$. We can then finish the proof as in Lemma \ref{lem pointwise wave}.
\end{proof}

With this in hand, we arrive (via an obvious analogue of Proposition \ref{prop chi}) at the following generalizations of Theorems \ref{thm funct higher ind} and \ref{thm funct higher rho}.

\begin{theorem}
	\label{thm more generalized funct higher ind}
	Let $N$ be a Riemannian manifold and $M_1$ and $M_2$ be Galois covers of $N$ with deck transformation groups $\Gamma_1$ and $\Gamma_2\cong\Gamma_1/H$ respectively, for a normal subgroup $H$ of $\Gamma_1$. Suppose that the conditions \textnormal{A} and \textnormal{B} in this subsection are satisfied. Let $D_N$ be a first-order, self-adjoint elliptic differential operator acting on a bundle $E_N\rightarrow N$, and assume that $D_N$ has unit propagation speed. Let $D_1$ and $D_2$ be the lifts of $D_N$ to $M_1$ and $M_2$ respectively. We then have:
	\begin{align*}
	(\Psi_{u,*})(\ind_{\Gamma_1,\textnormal{max,u}}D_1)&=\ind_{\Gamma_2,\textnormal{max,u}}D_2\in K_\bullet\big(C^*_{\textnormal{max},u}(M_2)^{\Gamma_2}\big),\\
	(\Psi_{u,*})(\ind_{\Gamma_1,\textnormal{max}}D_1)&=\ind_{\Gamma_2,\textnormal{max}}D_2\in K_\bullet\big(C^*_{\textnormal{max}}(M_2)^{\Gamma_2}\big).
	\end{align*}
\end{theorem}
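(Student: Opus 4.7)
The plan is to run the argument of Theorem \ref{thm funct higher ind} in the uniform (and non-uniform) Roe algebra setting, replacing each cocompact-specific ingredient with its non-cocompact analogue developed earlier in this section. All the structural pieces are already in place: we have the folding map $\Psi_u$ at the level of uniform Roe algebras, a regular functional calculus for $D_j$ on the Hilbert module $C^*_{\textnormal{max},u}(M_j)^{\Gamma_j}$ from \cite[Theorem 3.1]{GXY}, and Propositions \ref{prop wave non-cocompact} and \ref{prop waves equal noncocompact} giving both the compatibility $\Psi_u(e^{itD_1})=e^{itD_2}$ and the agreement of the Hilbert-module wave operator with the classical $L^2$ wave operator.

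The first step is to establish the uniform analogue of Proposition \ref{prop chi}: for every $f\in C_0(\mathbb{R})$,
\[
\Psi_u(f(D_1)) \;=\; f(D_2) \;\in\; \mathcal{M}(C^*_{\textnormal{max},u}(M_2)^{\Gamma_2}).
\]
The argument mirrors Proposition \ref{prop chi}. For $f$ in the Schwartz space with compactly supported Fourier transform, the Fourier inversion formula $f(D_j) = \frac{1}{2\pi}\int_{\mathbb{R}} \widehat{f}(t)\,e^{itD_j}\,dt$ holds as a strongly convergent integral of multipliers. Proposition \ref{prop wave non-cocompact} (combined with Proposition \ref{prop waves equal noncocompact}) moves $\Psi_u$ through each $e^{itD_1}$; testing against an arbitrary $\kappa\in C^*_{\textnormal{max},u}(M_2)^{\Gamma_2}$ written as $\Psi_u(\kappa_0)$, via the non-cocompact version of the surjectivity statement in Proposition \ref{prop psi surj} (whose partition-of-unity construction adapts to our locally finite covers), yields the identity for such $f$. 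Density of these Schwartz functions in $C_0(\mathbb{R})$ extends the identity.

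The second step is to apply the first entry-wise to the matrix representatives of the index. As in the proof of Theorem \ref{thm funct higher ind}, the maximal uniform higher index $\ind_{\Gamma_j,\textnormal{max},u}D_j$ is represented by a matrix $A_j(\chi)$ given by \eqref{eq Aj ungraded} or \eqref{eq Aj graded}, each of whose entries is (up to grading and the identity) an operator $f(D_j)$ with $f\in C_0(\mathbb{R})$. The first step then gives $\Psi_u(A_1(\chi)) = A_2(\chi)$, hence $(\Psi_u)_*[A_1(\chi)] = [A_2(\chi)]$ in $K_\bullet(C^*_{\textnormal{max},u}(M_2)^{\Gamma_2})$, which is the first claimed equality. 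The second equality follows by naturality: the inclusion $C^*_{\textnormal{max},u}(M_j)^{\Gamma_j}\hookrightarrow C^*_{\textnormal{max}}(M_j)^{\Gamma_j}$ intertwines $\Psi_u$ with $\Psi$ and sends the representative $A_j(\chi)$ of the uniform index to a representative of the non-uniform index, so the identity at the uniform level descends to the non-uniform level.

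The main obstacle is the first step: verifying that both sides of $\Psi_u(f(D_1))=f(D_2)$ actually live in the multiplier algebra of the \emph{uniform} Roe algebra, and that Fourier inversion converges there. This uses conditions \textnormal{A} and \textnormal{B} to guarantee, via Sobolev estimates, that operators of the form $e^{itD_j}\circ T_\kappa$ with $\kappa\in\mathcal{S}_u^{\Gamma_j}$ have smooth kernels with uniformly bounded covariant derivatives of all orders (so that $\mathcal{S}_u^{\Gamma_j}$ is preserved, cf.\ Lemma \ref{lem norm cts noncocompact}), together with the regularity statement of \cite[Theorem 3.1]{GXY} to legitimize the functional calculus. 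Once this uniformity is in hand, every other step is a verbatim translation of the cocompact proof of Theorem \ref{thm funct higher ind}.
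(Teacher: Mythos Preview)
Your proposal is correct and follows essentially the same approach as the paper, which proves the theorem by invoking ``an obvious analogue of Proposition~\ref{prop chi}'' built from Propositions~\ref{prop wave non-cocompact} and~\ref{prop waves equal noncocompact}, and then running the proof of Theorem~\ref{thm funct higher ind} verbatim; the second equality is handled exactly as you describe, via the natural inclusion $C^*_{\textnormal{max},u}(M_j)^{\Gamma_j}\hookrightarrow C^*_{\textnormal{max}}(M_j)^{\Gamma_j}$ (see the Remark following the theorem). Your write-up is in fact more explicit than the paper's, which leaves these steps to the reader.
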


\begin{theorem}
	\label{thm more generalized funct higher rho}
	Suppose $(N,g_N)$ is a spin Riemannian manifold with uniformly positive scalar curvature. Let $M_1$ and $M_2$ be Galois covers of $M$ with deck transformation groups $\Gamma_1$ and $\Gamma_2\cong\Gamma_1/H$ respectively, for a normal subgroup $H$ of $\Gamma_1$. Suppose that the conditions \textnormal{A} and \textnormal{B} in this subsection are satisfied. Let $D_N$ be the Dirac operator on $N$ (with unit propagation speed). Let $D_1$ and $D_2$ be the lifts of $D_N$ to $M_1$ and $M_2$ respectively. We then have:
	\begin{align*}
		(\Psi_{L,0,u})_*\big(\rho_{\textnormal{max,u}}(D_1)\big)=\rho_{\textnormal{max,u}}(D_2)\in K_\bullet\big(C^*_{L,0,\textnormal{max,u}}(M_2)^{\Gamma_2}\big),\\
		(\Psi_{L,0,u})_*\big(\rho_{\textnormal{max}}(D_1)\big)=\rho_{\textnormal{max}}(D_2)\in K_\bullet\big(C^*_{L,0,\textnormal{max}}(M_2)^{\Gamma_2}\big).
	\end{align*}
\end{theorem}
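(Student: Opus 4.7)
The plan is to essentially retrace the proof of Theorem \ref{thm funct higher rho}, substituting each cocompact-only input by its non-cocompact counterpart developed in this section. The two assertions (in the uniform and non-uniform algebras) will follow from the same argument once one has a non-cocompact analogue of Proposition \ref{prop chi}, after which the remainder is a purely $K$-theoretic repackaging via the path $R_{D_j}\colon t\mapsto A_j(F_t)$ defining the higher rho invariant. First, since $(N,g_N)$ has uniformly positive scalar curvature and $D_N$ is the Dirac operator, the Lichnerowicz formula gives $D_j^2\geq a>0$ uniformly on $M_j$, so $D_j$ is invertible as a regular operator on the Hilbert module $C^*_{\textnormal{max},u}(M_j)^{\Gamma_j}$ via the functional calculus of \cite{GXY}. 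Consequently the higher rho invariants $\rho_{\textnormal{max},u}(D_j)$ and $\rho_{\textnormal{max}}(D_j)$ are well defined exactly as in Definition \ref{def higher rho}, via paths $R_{D_j}\colon t\mapsto A_j(F_t)$ with $\{F_t\}$ a family of normalizing functions whose Fourier transforms are compactly supported and shrink in support as $t\to\infty$.

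The key intermediate step is to establish: for every $f\in C_0(\R)$,
\[
\Psi_u\big(f(D_1)\big)=f(D_2)\quad\text{in }\mathcal{M}\big(C^*_{\textnormal{max},u}(M_2)^{\Gamma_2}\big).
\]
For $f\in\mathcal{S}(\R)$ with $\widehat f$ compactly supported, write $f(D_j)=\frac{1}{2\pi}\int_\R\widehat f(t)\,e^{itD_j}\,dt$, where by Proposition \ref{prop waves equal noncocompact} the integrand $e^{itD_j}$ may be interpreted either via functional calculus on the Hilbert module $C^*_{\textnormal{max},u}(M_j)^{\Gamma_j}$ or as the classical $L^2$-wave operator (extended uniquely by the uniform bounded geometry analogue of Remark \ref{rem max norm}, which follows from conditions A and B). Then, exactly as in the proof of Proposition \ref{prop chi}, the fact that $\Psi$ restricts to a surjective $*$-homomorphism on the uniform Roe algebras, together with Proposition \ref{prop wave non-cocompact} ($\Psi(e^{itD_1})=e^{itD_2}$), lets one interchange $\Psi_u$ with the integral to obtain $\Psi_u(f(D_1))(\kappa)=f(D_2)\Psi_u(\kappa_0)$ for any $\kappa_0\in C^*_{\textnormal{max},u}(M_1)^{\Gamma_1}$. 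Density of such $f$ in $C_0(\R)$ and the fact that $\Psi_u$ extends continuously between multiplier algebras (Lemma \ref{lem surjective}) then upgrade the identity to all of $C_0(\R)$.

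With this in place, one proceeds as in the proof of Theorem \ref{thm funct higher rho}. By Definition \ref{def folding localization} (applied in the uniform setting), $(\Psi_{L,0,u})_*[R_{D_1}]$ is represented by the path $t\mapsto \Psi_u(A_1(F_t))$. Each entry of the matrix $A_j(F_t)$ from \eqref{eq Aj ungraded}--\eqref{eq Aj graded} is of the form $f(D_j)$ for some $f\in C_0(\R)$, modulo identity and grading summands which $\Psi_u$ fixes by construction. Applying the intermediate step entrywise yields
\[
\Psi_u\big(A_1(F_t)\big)=A_2(F_t)=R_{D_2}(t)
\]
for every $t\geq 0$, giving the first equality of the theorem. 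The second equality follows identically: the folding map and the wave identity persist on the larger algebras $C^*_{\textnormal{max}}(M_j)^{\Gamma_j}$, and the same path-level computation applies.

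The main obstacle I anticipate is the non-cocompact version of Proposition \ref{prop chi}: one must verify that for $f\in\mathcal{S}(\R)$ with compactly supported Fourier transform, the operator $f(D_j)$ actually lies in $C^*_{\textnormal{max},u}(M_j)^{\Gamma_j}$, i.e.\ is given by a smooth $\Gamma_j$-equivariant kernel with uniformly bounded covariant derivatives. This is where condition A (bounded geometry) is essential, as it lets elliptic regularity and Sobolev embedding yield uniform bounds analogous to those in Lemma \ref{lem norm cts noncocompact}, rather than the cocompact compactness argument used in Lemma \ref{lem pointwise wave}. Once this uniform smoothness is in hand, the strong convergence of the Fourier integral in the multiplier algebra and the passage $\mathcal{S}(\R)\to C_0(\R)$ are routine.
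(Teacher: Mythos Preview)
Your proposal is correct and follows essentially the same approach as the paper: the paper states that Theorem \ref{thm more generalized funct higher rho} follows ``via an obvious analogue of Proposition \ref{prop chi}'' built on Propositions \ref{prop wave non-cocompact} and \ref{prop waves equal noncocompact}, which is precisely the route you outline, including the identification of bounded geometry (condition A) as the substitute for cocompactness in establishing that $f(D_j)\in C^*_{\textnormal{max},u}(M_j)^{\Gamma_j}$. Your observation that the second equality follows by passing along the inclusion $C^*_{\textnormal{max},u}\hookrightarrow C^*_{\textnormal{max}}$ also matches the paper's concluding Remark.
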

\begin{remark}
The second equality in Theorem \ref{thm more generalized funct higher ind} follows from the observation that the natural inclusion $C^*_{\textnormal{max},u}(M_j)^{\Gamma_j}\hookrightarrow C^*_{\textnormal{max}}(M_j)^{\Gamma_j}$ relates $\ind_{\Gamma_j,\textnormal{max,u}}D_j$ to $\ind_{\Gamma_j,\textnormal{max}}D_j$. The second equality in Theorem \ref{thm more generalized funct higher rho} follows from a similar relationship at the level of obstruction algebras.
\end{remark}

\hfill\vskip 0.2in

\hfill\vskip 0.2in
%
%
%
\bibliographystyle{plain}
\bibliography{../../BigBibliography/mybib}

\end{document}